\definecolor{darkred}{rgb}{1,0,0} 
\definecolor{darkgreen}{rgb}{0,1,0}
\definecolor{darkblue}{rgb}{0,0,1}
\newcommand{\rG}{{\mathrm {G}}}
\newcommand{\rE}{{\mathrm {E}}}
\newcommand{\rF}{{\mathrm {F}}}
\newcommand{\rH}{{\mathrm {H}}}
\newcommand{\rS}{{\mathrm {S}}}
\newcommand{\rC}{{\mathrm {C}}}
\newcommand{\rP}{{\mathrm {P}}}
\newcommand{\rB}{{\mathrm {B}}}
\newcommand{\rU}{{\mathrm {U}}}
\newcommand{\rL}{{\mathrm {L}}}
\newcommand{\rSU}{{\mathrm {SU}}}
\newcommand{\rSL}{{\mathrm {SL}}}
\newcommand{\rPSL}{{\mathrm {PSL}}}
\newcommand{\rPGL}{{\mathrm {PGL}}}
\newcommand{\rGL}{{\mathrm {GL}}}
\newcommand{\rSO}{{\mathrm {SO}}}
\newcommand{\rO}{{\mathrm {O}}}
\newcommand{\rSp}{{\mathrm {Sp}}}
\newcommand{\fsl}{{\mathfrak {sl}}}
\newcommand{\fso}{{\mathfrak {so}}}
\newcommand{\fg}{{\mathfrak g}}
\newcommand{\fb}{{\mathfrak b}}
\newcommand{\fh}{{\mathfrak h}}
\newcommand{\fs}{{\mathfrak s}}
\newcommand{\fc}{{\mathfrak c}}
\newcommand{\fp}{{\mathfrak p}}
\newcommand{\fl}{{\mathfrak l}}
\newcommand{\fu}{{\mathfrak u}}
\newcommand{\pos}{{\mathrm {pos}}}
\newcommand{\Sym}{{\mathrm {Sym}}}
\newcommand{\Hom}{{\mathrm{Hom}}}
\newcommand{\End}{{\mathrm{End}}}
\newcommand{\Ad}{{\mathrm{Ad}}}
\newcommand{\Tr}{{\mathrm{Tr}}}
\newcommand{\Id}{{\mathrm{Id}}}
\newcommand{\ad}{{\mathrm{ad}}}
\newcommand{\rk}{{\mathrm{rk}}}
\newcommand{\hook}{\hook}
\newcommand{\Op}{\mathfrak{Op}}
\newcommand{\Bb}{{\mathcal B}}
\newcommand{\Xx}{{\mathcal X}}
\newcommand{\Ll}{{\mathcal L}}
\newcommand{\Ee}{{\mathcal E}}
\newcommand{\Ff}{{\mathcal F}}
\newcommand{\Kk}{{\mathcal K}}
\newcommand{\Mm}{{\mathcal M}}
\newcommand{\Oo}{{\mathcal O}}
\newcommand{\Qq}{{\mathcal Q}}
\newcommand{\Vv}{{\mathcal V}}
\newcommand{\Tt}{{\mathcal T}}
\newcommand{\Ww}{{\mathcal W}}
\newcommand{\Zz}{{\mathcal Z}}
\def    \C      {{\mathbb C}}
\def    \R      {{\mathbb R}}
\def    \Z      {{\mathbb Z}}
\def    \ab     {{\alpha\beta}}
\theoremstyle{plain}
\newtheorem{theorem}{Theorem}[section]
\newtheorem*{theorem*}{Theorem}
\newtheorem{lemma}[theorem]{Lemma}
\newtheorem{proposition}[theorem]{Proposition}
\newtheorem*{corollary*}{Corollary}
\numberwithin{equation}{section}
\theoremstyle{definition}
\newtheorem{definition}[theorem]{Definition}
\theoremstyle{remark}
\newtheorem{remark}[theorem]{Remark}
\newtheorem{example}[theorem]{Example}
\title[(G,P)-Opers and Slodowy slices]{(G,P)-Opers and global Slodowy slices}
\author[B. Collier]{Brian Collier}
\address{B. Collier, Mathematical Sciences Research Institute}
\email{briancollier01@gmail.com}
\author[A. Sanders]{Andrew Sanders}
\address{A. Sanders, Department of Mathematics, Heidelberg University}
\email{asanders@mathi.uni-heidelberg.de}
\thanks{B.C.'s research leading to these results has received funding from a National Science Foundation Mathematical Sciences Postdoctoral Fellowship, NSF MSPRF no. 1604263. The authors also acknowledges support from U.S. National Science Foundation grants DMS 1107452, 1107263, 1107367 "RNMS: GEometric structures And Representation varieties" (the GEAR Network).}
\begin{document}


\date{\today}

\begin{abstract}
In this paper, we introduce a generalization of $\rG$-opers for arbitrary parabolic subgroups $\rP<\rG$
of a complex semisimple Lie group. For parabolic subgroups associated to ``even nilpotents'', we parameterize $(\rG,\rP)$-opers by an object generalizing the base of the Hitchin fibration. In particular, we describe and parameterize families of opers associated to  higher Teichm\"uller spaces.
\end{abstract}

\maketitle
\tableofcontents

\section{Introduction}\label{sec: intro}

Given a second order holomorphic ordinary differential equation on the upper half plane $\mathbb{H}\subset \mathbb{C},$ if there exists a basis of holomorphic solutions $\{u_{1}, u_{2}\}$ which never simultaneously vanish, they define a holomorphic map $D: \mathbb{H}\rightarrow \mathbb{CP}^{1}$ by the formula $D=u_{1}/u_{2}.$  Imposing certain invariance and non-degeneracy conditions on the ODE under the action of a co-compact Fuchsian group $\Gamma<\textnormal{Aut}(\mathbb{H})$ leads to a representation $\rho: \pi_{1}(\mathbb{H}/\Gamma)\rightarrow \rPGL_{2}\C$ for which the map $D$ is an equivariant holomorphic immersion.

Pairs $(D, \rho)$ arising in this way are called complex projective structures on the Riemann surface $X=\mathbb{H}/\Gamma$ (see \cite{GUN66}).  
Treating the ODE itself as fundamental leads to a differential geometric description of such structures as particular rank-$2$ holomorphic flat vector bundles on $X$.
Extensions of complex projective structures to higher order equations have been under development since (at least) the 1960's \cite{TEL60,HEJ75}, with the relevant objects now being certain rank-$n$ holomorphic flat vector bundles on $X.$ 

Generalizing the case of $\rP\rGL_n\C$, Drinfeld-Sokolov \cite{DS84} developed a local theory of ordinary holomorphic differential equations which takes as input an arbitrary complex reductive Lie group $\rG$. 
In their work on the geometric Langlands conjecture, Beilinson-Drinfeld \cite{BeilinsonDrinfeldOPERS} globalized this theory to the notion of a $\rG$-\emph{oper} on a Riemann surface $X$.
According to Beilinson-Drinfeld, a $\rG$-oper on $X$ is a triple $(E_{\rG}, E_{\rB}, \omega)$, where $E_{\rG}$ is a holomorphic principal $\rG$-bundle on $X$, $E_{\rB}$ is a holomorphic reduction to a Borel subgroup $\rB<\rG,$ and $\omega$ is a holomorphic connection on $E_{\rG}$ which satisfies certain transversality and nowhere vanishing properties with respect to the reduction $E_{\rB}.$ 

Around the same time, Hitchin \cite{liegroupsteichmuller} discovered a remarkable component of the
space of conjugacy classes of homomorphisms $\pi_{1}(X)\rightarrow \rG_{\mathbb{R}},$ where $\rG_{\mathbb{R}}<\rG$ is a split real form of a complex simple Lie group (e.g. $\rPSL_n\R$ is the split real form of $\rPSL_n\C$) and $X$ is a compact Riemann surface of genus $g\geq 2$. This component is now called the \emph{Hitchin} component. When $\rG=\rPSL_2\C$, the Hitchin component is the \emph{Fuchsian} space of conjugacy classes of holonomy representations $\rho: \pi_{1}(X)\rightarrow \rPSL_{2}\R$ of hyperbolic structures on the underlying oriented topological surface $\Sigma$ and, via the uniformization theorem, is identified with the Teichm\"{u}ller space of $\Sigma.$ 

In \cite{AnosovFlowsLabourie}, Labourie proved that the representations in Hitchin components generalize many geometric features of Fuchsian space: in particular, they are discrete and faithful quasi-isometric embeddings. This was achieved by showing that Hitchin representations satisfy a very strong dynamical property, called the \emph{Anosov} property. Since Labourie's work, there has been intense activity aimed at organizing and classifying Anosov representations.  

Although Hitchin representations and $\rG$-opers are very different geometrically, the resulting moduli spaces share many common features. Most notably, both moduli spaces are parameterized by a vector space of holomorphic differentials on $X$ called the base of the Hitchin fibration \cite{liegroupsteichmuller,BeilinsonDrinfeldOPERS}. More recently, in \cite{GaiottoLimitsOPERS}, the authors showed that an operation called the \emph{conformal limit} identifies these parameterizations, proving a conjecture of Gaiotto \cite{GaiottoConj}. 

In the theories of $\rG$-opers and Hitchin representations, a primary role is played by a Borel subgroup.  
In general, Anosov representations are defined relative to a parabolic subgroup.  
Over the last fifteen years, other components of surface group representations into real Lie groups have been discovered, which in many ways are analogues of Hitchin components, but with a larger parabolic subgroup replacing the role of the Borel subgroup. Such components are now referred to as higher Teichm\"uller components.

With these motivations, in this paper we introduce the notion of a $(\rG,\rP)$-oper for $\rP<\rG$ an arbitrary parabolic subgroup. Our main results generalize the parameterization of Hitchin components and $\rG$-opers by the Hitchin base.  While we cast the general theory in Lie theoretic terms, in most cases the objects can be made relatively explicit with matrices and vector bundles. A significant part of the paper is dedicated to explicating this aspect.   



\subsection*{(G,P)-opers}
Let $X$ be a compact Riemann surface of genus $g\geq 2$ and let $\rP<\rG$ be a parabolic subgroup of a connected complex semisimple Lie group $\rG$. 
Roughly, a \emph{$(\rG,\rP)$-oper on $X$} is a triple $(E_{\rG}, E_{\rP}, \omega)$, where $E_{\rG}$ is a holomorphic principal $\rG$-bundle on $X$, $E_{\rP}$ is a holomorphic reduction to the parabolic $\rP<\rG,$ and $\omega$ is a holomorphic connection on $E_{\rG}$ which satisfies certain transversality and nowhere vanishing properties with respect to the reduction $E_{\rP}$. The subtle point is defining the correct notion of nowhere vanishing (see Definition \ref{Def oper def}). 

A simple example is given by $\rG=\rSL_{2n}\C$ and $\rP$ is the stabilizer of a subspace $\C^n\subset\C^{2n}.$ In this case, a $(\rG,\rP)$-oper is equivalently defined as a $4$-tuple $(\Vv_{2n},\Vv_n,\nabla, \Omega)$, where $\Vv_{2n}$ is a rank $2n$ holomorphic vector bundle with a holomorphic volume form $\Omega$, $\Vv_n\subset\Vv_{2n}$ is a rank $n$ holomorphic sub-bundle, and $\nabla$ is a holomorphic connection on $\Vv_{2n}$ for which $\Omega$ is parallel and such that the induced $\Oo_X$-linear map $\nabla:\Vv_n\to\Kk\otimes \Vv_{2n}/\Vv_n$ is an isomorphism. Here, $\Kk$ is the holomorphic cotangent bundle of $X$.

We briefly describe the picture we wish to generalize. In \cite{selfduality}, Hitchin introduced the (coarse) moduli space $\Mm^0_X(\rG)$ of poly-stable $\rG$-Higgs bundles on $X.$  Some relevant aspects of the theory of Higgs bundles and opers are: 
\begin{enumerate}
      \item  There is a homeomorphism $\Tt:\Mm^0_X(\rG)\to\Mm_X^1(\rG)$ from the moduli space of poly-stable Higgs bundles to the moduli space $\Mm_X^1(\rG)$ of reductive holomorphic flat $\rG$-bundles on $X$ \cite{selfduality,harmoicmetric,canonicalmetrics,SimpsonVHS}. This is known as the non-abelian Hodge correspondence.
      \item There is a holomorphic map $\Mm^0_X(\rG)\to\bigoplus_{j=1}^{\rk\rG}\rH^0(X, \Kk^{m_j+1})$ called the \emph{Hitchin} fibration \cite{IntSystemFibration}. Here $\{m_j\}$ are the exponents of $\fg$. 
      \item The Hitchin fibration has a section $s_h$ and the Hitchin components are the holonomy representations of $\Tt\circ s_h$ \cite{liegroupsteichmuller}.
      \item Each connected component of isomorphism classes of $\rG$-opers is parameterized by $\bigoplus_{j=1}^{\rk\rG}\rH^0(X,\Kk^{m_j+1})$ \cite{BeilinsonDrinfeldOPERS}.
  \end{enumerate} 
We summarize this in the following diagram:
\[\xymatrix@R=1.5em{\Mm^0_X(\rG)\ar[r]^\Tt\ar[d]&\Mm^1_X(\rG)\\
\bigoplus_{j=1}^{\rk\rG}\rH^0(X,\Kk^{m_j+1})\ar@<.3pc>@/^.8pc/[u]^{s_h}\ar[ur]_{op}}.\]

In fact, for $\lambda\in\C,$ the above moduli spaces fit into a family of moduli spaces $\Mm^\lambda_X(\rG)$ of holomorphic $\lambda$-connections. Moreover, there is a family of maps $op_\lambda:\bigoplus_{j=1}^{\rk\rG}\rH^0(X,\Kk^{m_j+1})\to\Mm^\lambda_X(\rG)$ with $op_0=s_H$ and $op_1=op.$ Roughly, the maps $op_\lambda$ are defined by choosing a base point $\rP\rSL_2\C$-oper and adding holomorphic differentials to the connection as coefficients in certain highest weight spaces of $\fsl_2\C$-representations.  The main goal of this paper is to extend this story to the setting of $(\rG, \rP)$-opers, with the fundamental new idea being the identification of the analogue of the Hitchin base.

Our main theorems parameterizing $(\rG, \rP)$-opers apply to a class of parabolics called \emph{even Jacobson-Morozov parabolics} (see Definition \ref{def: even JM parabolic}). Such parabolics are associated to embeddings of $\rPSL_2\C$ in the adjoint Lie group, or equivalently, even $\fsl_2\C$-subalgebras of the Lie algebra $\fg.$ There are many examples, for instance, the Borel subgroup $\rB<\rG$ and the stabilizer of a linear subspace $\C^n\subset\C^{2n}$. 
The object which parameterizes such $(\rG,\rP)$-opers (i.e., the generalization of the Hitchin base) is a groupoid which we call the \emph{Slodowy category}. A key player in the definition of the Slodowy category is the centralizer of the associated $\fsl_2\C$-subalgebra. 

Given an even $\fsl_2\C$-subalgebra $\fs\subset\fg$, let $\rS<\rG$ be the connected Lie group with Lie algebra $\fs$ and $\rC<\rG$ be the centralizer of $\rS.$ The $\fs$-Slodowy category $\Bb_{\fs, X}(\rG)$ has objects $(E_\rC,\psi_0,\psi_{m_1},\cdots,\psi_{m_N})$, where $E_\rC$ is a holomorphic $\rC$-bundle with holomorphic connection $\psi_0$ and, for $1\leq j\leq N,$ $\psi_{m_j}$ is a holomorphic section of an associated vector bundle $E_\rC[V_{2m_j}]\otimes \Kk^{m_j+1}$.
 Here the vector spaces $V_{2m_j}$ are highest weight spaces of certain $\fsl_2\C$ representations (see Definition \ref{def s Slodowy category}).
When $\rB<\rG$ is the Borel subgroup, $\rC$ is the center of $\rG$ and $\Bb_{\fs, X}(\rG)$ is the product of the Hitchin base with the finite set of $\rC$-bundles on $X.$ When $\rP<\rSL_{2n}\C$ is the stabilizer of a subspace $\C^n\subset\C^{2n}$, objects of the Slodowy category correspond to triples $(\Ww,\psi_0,\psi_1),$ where $\Ww$ is a rank $n$ holomorphic vector bundle on $X$ with $\det(\Ww)^{\otimes 2}$ trivial, $\psi_0$ is a holomorphic connection on $\Ww$ compatible with the trivialization of $\det(\Ww)^{\otimes 2},$ and $\psi_1\in \rH^0(X, \Kk^2\otimes \End(\Ww)).$

\begin{remark}
In the case of the Borel subgroup $\rB<\rG$, the work of Hitchin and Beilinson-Drinfeld yield holomorphic parameterizations of the moduli spaces of Hitchin representations and $(\rG ,\rB)$-opers, respectively. The existence of these moduli spaces is simplified by the subgroup $\rC$ in the previous paragraph being discrete.
In general, the construction of relevant moduli spaces will depend on natural stability conditions. To avoid complications with stability conditions, we work entirely with groupoids/categories and leave the topic of constructing moduli spaces/stacks for another time.
\end{remark}


\subsection*{Main results}
 
Denote the category of $(\rG,\rP)$-opers on $X$ by $\Op_X(\rG,\rP).$ For an even $\fsl_2\C$ subalgebra $\fs\subset\fg$, let $\rS<\rG$ be the associated connected Lie group and $\rP<\rG$ be the associated even Jacobson-Morozov parabolic. In Theorem \ref{thm: equivalence of categories} we establish the following: For each $\rS$-oper $\Theta$ on $X,$ there is a functor
 \[F_\Theta:\Bb_{\fs, X}(\rG)\to\Op_X(\rG,\rP),\] 
which is an equivalence of categories (see also Theorem \ref{thm: parameterization picking PSL2 oper}).  In particular, the  functor $F_\Theta$ induces a bijection at the level of isomorphism classes.
The functor $F_\Theta$ is defined in Definition \ref{def Slodowy functor}. Roughly, the sections $\psi_{m_j}$ are \emph{coefficients} in certain highest weight spaces of $\fg$ as an $\fsl_{2}\C$-module, and are used to affinely deform the fixed $\rS$-oper as a $(\rG, \rP)$-oper. This is analogous to the Hitchin section when $\rP$ is a Borel subgroup $\rB<\rG$.

To remove the choice of $\rS$-oper, we define a full subcategory $\widehat\Bb_{\fs, X}(\rG)$ of $\Bb_{\fs, X}(\rG)$ which is analogous to setting the quadratic differential in the Hitchin base to be zero. This yields a functor $F:\Op_X(\rS,\rB_\rS)\times\widehat\Bb_{\fs,X}(\rG)\to\Op_X(\rG,\rP)$ given by $F(\Theta,\Xi)=F_\Theta(\Xi),$ which we call the \emph{$\fs$-Slodowy functor}.
 
\begin{theorem} (Theorem \ref{thm: no quadratic equivalence})
 Let $X$ be a compact Riemann surface of genus $g\geq 2$ and $\rG$ be a connected complex semisimple Lie group. Let $\fs\subset\fg$ be an even $\fsl_2\C$-subalgebra, $\rS<\rG$ be the associated connected subgroup and $\rP<\rG$ be the associated even Jacobson-Morozov parabolic. Then the $\fs$-Slodowy functor 
 \begin{align*}
 F:\Op_X(\rS,\rB_\rS)\times \widehat\Bb_{\fs, X}(\rG)&\to\Op_{X}(\rG,\rP) \\
  (\Theta,\Xi)&\mapsto F_\Theta(\Xi)
 \end{align*}
 is an equivalence of categories when $\rS\cong\rPSL_2\C$ and essentially surjective and full when $\rS\cong\rSL_2\C.$ 
 In particular, the Slodowy functor induces a bijection on isomorphism classes. 
\end{theorem}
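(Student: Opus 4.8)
The plan is to reduce Theorem \ref{thm: no quadratic equivalence} to the already-established equivalence $F_\Theta : \Bb_{\fs, X}(\rG) \to \Op_X(\rG,\rP)$ of Theorem \ref{thm: equivalence of categories}, by analyzing how the choice of $\rS$-oper $\Theta$ interacts with the ``quadratic differential'' direction in $\Bb_{\fs, X}(\rG)$. Recall that an object of $\Bb_{\fs, X}(\rG)$ is a tuple $(E_\rC, \psi_0, \psi_{m_1}, \dots, \psi_{m_N})$, and that $\widehat\Bb_{\fs,X}(\rG)$ is the full subcategory where the lowest-degree datum (the coefficient of the exponent $m_1 = 1$, i.e.\ the quadratic-differential slot) is set to zero. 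The key observation is that $\Op_X(\rS, \rB_\rS)$ is itself, by Beilinson--Drinfeld applied to $\rS$, parameterized by $\rH^0(X, \Kk^2)$ (together with the finite set of $\rC$-bundles, when $\rS \cong \rSL_2\C$ rather than $\rPSL_2\C$), and that moving the base point $\Theta$ by a quadratic differential $q$ precisely shifts the $\psi_1$-coefficient of $F_\Theta(\Xi)$ by $q$. Thus $F(\Theta, \Xi) = F_\Theta(\Xi)$ should be understood as $F_{\Theta_0}$ applied to the object of $\Bb_{\fs,X}(\rG)$ obtained from $\Xi \in \widehat\Bb_{\fs,X}(\rG)$ by inserting the quadratic differential recording $\Theta$ (relative to a fixed reference $\Theta_0$).

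Concretely, I would proceed as follows. \textbf{Step 1.} Fix a reference $\rS$-oper $\Theta_0$ on $X$ and use Theorem \ref{thm: equivalence of categories} to get the equivalence $F_{\Theta_0} : \Bb_{\fs,X}(\rG) \to \Op_X(\rG,\rP)$. \textbf{Step 2.} Construct a functor $G : \Op_X(\rS,\rB_\rS) \times \widehat\Bb_{\fs,X}(\rG) \to \Bb_{\fs,X}(\rG)$ which records an $\rS$-oper $\Theta$ as a point of its Beilinson--Drinfeld parameter space and places that datum into the quadratic-differential slot of $\Xi$ (leaving the $\rC$-bundle $E_\rC$, the connection $\psi_0$, and the higher $\psi_{m_j}$ from $\Xi$ unchanged); one must check this is compatible with morphisms, i.e.\ that isomorphisms of $\rS$-opers match isomorphisms in the quadratic slot. \textbf{Step 3.} Identify $F$ with $F_{\Theta_0} \circ G$ by unwinding Definition \ref{def Slodowy functor}: the affine deformation of $\Theta$ by the $\psi_{m_j}$'s differs from the affine deformation of $\Theta_0$ by the same $\psi_{m_j}$'s exactly by the connection-difference $\Theta - \Theta_0$, which lives in the weight-$2$ (quadratic differential) summand and is absorbed by reindexing the input. \textbf{Step 4.} Conclude: since $F_{\Theta_0}$ is an equivalence, $F$ is an equivalence / essentially surjective and full exactly when $G$ is.

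For the last step I would analyze $G$ according to whether $\rS \cong \rPSL_2\C$ or $\rS \cong \rSL_2\C$. When $\rS \cong \rPSL_2\C$, the relevant $\rS$ has discrete (trivial) center so the Beilinson--Drinfeld parameter space for $\Op_X(\rS, \rB_\rS)$ is exactly $\rH^0(X,\Kk^2)$, which is precisely the quadratic-differential factor being removed; hence $G$ is an equivalence onto $\Bb_{\fs,X}(\rG)$ (the product decomposition $\Bb_{\fs,X}(\rG) \simeq \Op_X(\rS,\rB_\rS) \times \widehat\Bb_{\fs,X}(\rG)$ holds on the nose, since the $\rC$-bundle and $\psi_0$ are shared and independent of the quadratic slot), and $F$ is an equivalence. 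When $\rS \cong \rSL_2\C$, there is the additional discrepancy between the finite set of $\rS$-opers covering a given $\rPSL_2\C$-oper versus the finite set of $\rC$-bundles / choices appearing in $\Bb_{\fs,X}(\rG)$: the map on the finite part need not be a bijection, so $G$ (hence $F$) can fail to be faithful or to distinguish isomorphism classes injectively, but it remains essentially surjective (every object of $\Op_X(\rG,\rP)$ lifts, because every $\rC$-bundle datum is hit) and full. I expect the main obstacle to be precisely this bookkeeping in the $\rSL_2\C$ case: making rigorous the comparison between $\pi_0$ of $\Op_X(\rS,\rB_\rS)$ and the finite data in $\Bb_{\fs,X}(\rG)$ — equivalently, understanding which connected components of the Slodowy category are reached and with what multiplicity — and verifying that the failure of injectivity is exactly compensated so that essential surjectivity and fullness survive while the bijection on isomorphism classes is replaced by a surjection. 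The rest is a matter of carefully chasing the definition of $F_\Theta$ through its construction as an affine deformation.
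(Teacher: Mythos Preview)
Your essential-surjectivity idea is exactly the paper's: given any $(\rG,\rP)$-oper, apply Theorem \ref{thm: equivalence of categories} with a fixed $\Theta_0$ to write it as $F_{\Theta_0}(\Xi)$, split off the $\langle e\rangle$-component $q$ of $\psi_{m_1}$, and absorb $q$ into the $\rS$-oper by setting $\Theta_q=(F_\rS,F_{\rB_\rS},\eta+q)$, so that $F_{\Theta_0}(\Xi)=F_{\Theta_q}(\hat\Xi)$ with $\hat\Xi\in\widehat\Bb_{\fs,X}(\rG)$. (Note: $\widehat\Bb$ is defined by $\psi_{m_1}=\hat\psi_{m_1}$, i.e.\ only the $\langle e\rangle$-summand vanishes, not the whole $\psi_{m_1}$.)

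The problems are on the fullness/faithfulness side. First, your proposed factorization $F=F_{\Theta_0}\circ G$ does not hold when $\rS\cong\rSL_2\C$: the functor $F_\Theta$ genuinely depends on the underlying $\rS$-bundle (the spin structure), not merely on the quadratic differential recording $\Theta$ relative to $\Theta_0$, so $G$ as you describe it loses information that $F$ retains. Thus you cannot read off the properties of $F$ from those of $G$. Second, and relatedly, your approach gives no mechanism for fullness. The paper argues fullness directly: a morphism $\Phi$ between two objects $F_\Theta(\hat\Xi)$ and $F_{\Theta'}(\hat\Xi')$ is a priori only $\rP$-valued, and one must show it is actually valued in $\rC\star\rB_\rS$ and then factors as $\Phi_\rC\cdot\Phi_{\rB_\rS}$. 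This uses the graded analysis encoded in Lemma \ref{lemma Ad_U(f) not in V} (the connection form lands in $\fs+V$, forcing the gauge transformation out of the unipotent radical level by level). Your reduction to $G$ bypasses this argument but does not replace it.

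Finally, your closing remark is wrong: essentially surjective plus full, for a functor between groupoids, already implies a \emph{bijection} on isomorphism classes (an isomorphism $F(a)\cong F(b)$ lifts by fullness to a morphism $a\to b$, automatically invertible). So the failure in the $\rSL_2\C$ case is purely a failure of faithfulness---the multiplication $\rS\times\rC\to\rG$ has kernel the center of $\rS$, so the automorphism $(-1,-1)$ of $(\Theta,\Xi)$ maps to the identity---not a failure of injectivity on isomorphism classes. Your proposed bookkeeping with $\pi_0$ and ``surjection replacing bijection'' is therefore aimed at the wrong target.
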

\begin{remark}
	For $\rG=\rSL_{kr}\C$ and the parabolic $\rP$ stabilizing a partial flag $\C^r\subset\C^{2r}\subset\cdots\subset\C^{kr}$, the above theorem recovers results of Biswas in \cite{BISWASopersn=kr}. Partially motivated by the work presented in the current article, Biswas-Schaposnik-Wang recently considered certain symplectic and orthogonal analogues of such partial flags, see \cite{BLY_B-opers}. 
\end{remark}

The category of $(\rG,\rP)$-opers and the Slodowy category have natural $\lambda$-connection generalizations, denoted  respectively by $\Op_X^\lambda(\rG,\rP)$ and $\rB_{\fs,X}^\lambda(\rG)$. The Slodowy functor also generalizes to this context, and in Theorem \ref{Thm lambda equiv} we establish the analogue of the above theorem in this setting. 
When $\lambda=0$, a $(\lambda,\rG,\rP)$-oper is a special type of Higgs bundle. 

For $\rG=\rSL_{2n}\C$ and $\rP$ the stabilizer of a linear subspace $\C^n\subset\C^{2n}$, the objects of the Slodowy category $\Bb_{\fs,X}^0(\rG)$ consist of triples $(\Ww,\psi_0,\psi_1)$, where $\Ww$ is a holomorphic vector bundle of rank $n$ such that $\det(\Ww)^{\otimes 2}$ is trivial, $\psi_0\in \rH^0(X,\Kk\otimes \End(\Ww))$ is a $\Kk$-twisted traceless endomorphism, and $\psi_1\in \rH^0(X,\Kk^2\otimes \End(\Ww))$ is a $\Kk^2$-twisted endomorphism.  

For the $(0,\rSL_2\C,\rB)$-oper $\Theta=(\Kk^\frac{1}{2}\oplus \Kk^{-\frac{1}{2}}, \left(\begin{smallmatrix}
	0&q_2\\1&0
\end{smallmatrix}\right) )$, the functor $F_\Theta$ is defined by
\begin{equation*}
	\label{eq max sunn}F_\Theta(\Ww,\psi_0,\psi_1)=\left((\Ww\otimes \Kk^{\frac{1}{2}})\oplus (\Ww\otimes \Kk^{-\frac{1}{2}}), \begin{pmatrix}
	\psi_0&q_2\otimes\Id_\Ww+\psi_1\\\Id_\Ww&\psi_0
\end{pmatrix}\right).
\end{equation*}
When $\psi_0=0$ and $q_2=0,$ this is an $\rSU_{n,n}$-Higgs bundle, and the above description recovers the so called Cayley correspondence for maximal $\rSU_{n,n}$-Higgs bundles of \cite{UpqHiggs}. Moreover, when the associated Higgs bundles are poly-stable, the nonabelian Hodge correspondence identifies them with the higher Teichm\"uller spaces known as \emph{maximal $\rSU_{n,n}$-representations}. 

Due to the work of Guichard-Wienhard \cite{PosRepsGWPROCEEDINGS} on $\Theta$-positive structures, it is reasonable to define a higher Teichm\"uller space as a connected component of the space of conjugacy classes of representations $\pi_1X\to\rG^\R$ consisting entirely of $\Theta$-positive Anosov representations. Here $\rG^\R$ is a real form of a complex simple Lie group which admits a so called $\Theta$-positive structure, see \cite{PosRepsGWPROCEEDINGS}. 
In \S \ref{Section examples} we describe how, ignoring stability conditions, our construction recovers all known Higgs bundle parameterizations of higher Teichm\"uller spaces. In particular, we recover the Hitchin component of \cite{liegroupsteichmuller} when $\rG^\R$ is a split real Lie group, the Cayley correspondence for maximal representations of \cite{BGRmaximalToledo} when $\rG^\R$ is a Hermitian Lie group of tube type, and the generalized Cayley correspondence of \cite{so(pq)BCGGO} when $\rG^\R\cong\rSO_{p,q}$. 
There is exactly one more expected family of higher Teichm\"uller spaces which are associated to certain real forms of the exceptional Lie groups $\rF_4,$ $\rE_6,$ $\rE_7$ and $\rE_8.$ For all expected higher Teichm\"uller spaces, the  parameterization of the associated moduli spaces of Higgs bundles by the Slodowy functor will be described in \cite{MagicalBCGGO}.

  

\begin{remark}
	Higher Teichm\"uller spaces only arise for very special even JM-parabolics. It would be interesting to understand the geometric properties of representations associated to applying nonabelian Hodge to the set of poly-stable $(0,\rG,\rP)$-opers for arbitrary even JM-parabolic.
\end{remark}

Finally, we discuss the relationship between $(\rG,\rP)$-opers, Simpson's partial oper stratification and the conformal limit. In \cite{GaiottoLimitsOPERS}, the authors showed that the space of $(\rG,\rB)$-opers are identified with the Hitchin section by an operation called the \emph{conformal limit}, proving a conjecture of Gaiotto \cite{GaiottoConj}. For $\rG=\rSL_n\C$, the space of $\rB$-opers and the Hitchin section are each a closed stratum of natural stratifications introduced by Simpson \cite{SimpsonDeRhamStrata}. Moreover, under a smoothness assumption, the conformal limit correspondence for arbitrary strata was established in \cite{ConfLimColWent}. In the generality of this paper, it is natural to expect that the conformal limit also identifies $(0,\rG,\rP)$-opers with $(1,\rG,\rP)$-opers. Indeed, for the case $\rG=\rSL_{2n}\C$ and $\rP$ the stabilizer of a subspace $\C^n\subset\C^{2n}$, when the bundle $\Ww$ in the Slodowy category is a stable bundle, this follows from the work in \cite{ConfLimColWent}. 

\vspace{.5cm}

\noindent\textbf{Acknowledgments:} We would like to thank Anna Wienhard for encouraging us to investigate a generalization of opers, and Jeff Adams, François Labourie and Richard Wentworth for many useful conversations and suggestions. We also express our appreciation for the hospitality of the University of Heidelberg and the Simons Center program \emph{Geometry and Physics of Hitchin Systems}, where much of this work was carried out.

\section{Lie theory preliminaries}
In this section, we recall most of the Lie theory which will be used throughout the paper. For this section, $\rG$ will be a connected complex semi-simple Lie group with Lie algebra $\fg$ and Killing form $B_\fg.$ 

\subsection{Parabolics}\label{sec: parabolics}
A Borel subgroup $\rB<\rG$ is a maximal connected solvable subgroup. There is a unique Borel subgroup up to conjugation. A subgroup $\rP<\rG$ is called a parabolic subgroup if it contains a Borel subgroup. 

For a parabolic $\rP<\rG,$ let $\rU<\rP$ be the unipotent radical.
There is a canonical filtration of $\rU=\rU^1,$
\[\rU^m<\cdots<\rU^2<\rU^1<\rP.\]
The quotient $\rL=\rP/\rU$ is a reductive group called the Levi factor of $\rP.$ 
The exact sequence $1\to \rU\to \rP\to \rL \to 1$ is split, and a choice of splitting defines a Levi subgroup of $\rL<\rP.$ 

Let $\fu\subset\fp$ be the Lie subalgebra of the unipotent radical $\rU<\rP$. There is a canonical $\rP$-invariant Lie algebra filtration of $\fg$ defined by 
\begin{equation}
    \label{eq canonical filtration of parabolics}0\subset\fg^{m}\subset\cdots\subset \fg^{1}\subset\fg^0\subset\fg^{-1}\subset\cdots\subset\fg^{-m}=\fg,
\end{equation}
where $\fg^j=\{x\in\fg~|~\ad_x(\fu)\subset\fg^{j+1}\}.$ In particular, $\fg^1=\fu$ and $\fg^0=\fp.$
 
A choice of a Levi subgroup $\rL<\rP$ induces an $\rL$-invariant splitting $\fp=\fl\oplus\fu$, where $\fl$ is the Lie algebra of $\rL.$ Moreover, $\fg$ acquires an $\rL$-invariant grading
\begin{equation}
    \label{eq associated graded Z grading of parabolic}\fg=\bigoplus_{j\in\Z}\fg_j,
\end{equation}
which is $\rL$-equivariantly isomorphic to the associated graded of the canonical filtration 
\eqref{eq canonical filtration of parabolics}. Thus, $\fg_j\cong\fg^j/\fg^{j+1}$ as $\rL$-modules, and $\fg^k\cong\bigoplus_{j\geq k}\fg_j$ as $\rP$-modules. In particular, 
\[\xymatrix{\fp\cong\bigoplus\limits_{j\geq0}\fg_j,&\fl\cong\fg_0&\text{and}&\fu\cong\bigoplus\limits_{j>0}\fg_j}.\]

By a theorem of Vinberg (see \cite[Theorem 10.19]{knappbeyondintro}), each graded piece $\fg_j$ has a unique dense open $\rL$-orbit $\Oo_j\subset\fg_j$.  Since the action of $\rP$ and $\rL$ on $\fg^j/\fg^{j+1}$ are the same, there is a unique open dense $\rP$-orbit in $\fg^j/\fg^{j+1}$. 


\begin{example}\label{ex: SL4 Lie data}
For the group $\rG=\rSL_4(\C)$, the group of unit determinant upper triangular matrices defines a Borel subgroup. The parabolic subgroups which contain this choice of $\rB$ consist of block upper triangular matrices. Geometrically, $\rB$ is the stabilizer of a complete flag in $\C^4$ and other parabolics are stabilizers of partial flags in $\C^4.$ 
Here are some explicit examples:
    \begin{enumerate}
        \item The Borel subgroup $\rP=\rB$ which stabilizes a complete flag $\C\subset\C^2\subset\C^3\subset\C^4$ consists of unit determinant upper triangular matrices. The unipotent radical $\rU<\rB$ consists upper triangular matrices with $1$'s on the diagonal. The diagonal matrices define a Levi subgroup $\rL<\rB$, and, with this choice, the graded pieces $\fg_{j}$ are given by the $j^{th}$-super diagonal. In particular, $\fg_{-1}$ is given by matrices of the form
        \[\left(\begin{smallmatrix}
        0&&&\\x_{1}&0&&\\&x_2&0&\\&&x_3&0
    \end{smallmatrix}\right),\]
    The unique open $\rL$-orbit $\Oo_{-1}\subset\fg_{-1}$ is defined by $x_j\neq0$ for all $j.$

        \item The group $\rP<\rSL_4(\C)$ which stabilizes a partial flag $\C^2\subset\C^4$ consists of unit determinant $(2\times 2)$-block matrices of the form
        \[\left(\begin{matrix}
            A&B\\ 0&D
        \end{matrix}\right).\]
        The unipotent radical $\rU<\rP$ is the subgroup where $A=\Id=D,$ and the subgroup where $B=0$ defines a Levi subgroup. This choice of Levi subgroup determines the grading
        \[\fg=\fg_{-1}\oplus\fg_0\oplus\fg_1=\left(\begin{matrix}
            0&0\\C&0
        \end{matrix}\right)\oplus
        \left(\begin{matrix}
            A&0\\ 0 &D
        \end{matrix}\right)\oplus\left(\begin{matrix}
            0&B\\ 0&0 
        \end{matrix}\right).\]
        The unique open $\rL$-orbit $\Oo_{-1}\subset\fg_{-1}$ is defined by $\det(C)\neq0.$ 

        \item
        The group $\rP<\rSL_4(\C)$ which stabilizes a partial flag $\C^1\subset\C^3\subset\C^4$ is given by unit determinant matrices of the form
        \[\left(\begin{smallmatrix}
            *&*&*&*\\  &*&*&*\\&*&*&*\\&&&*
        \end{smallmatrix}\right).\]
        Block diagonal matrices define a Levi subgroup, and with this choice, 
        \[\fp=\fg_0\oplus\fg_1\oplus\fg_2=\left(\begin{smallmatrix}
            *&0&0&0\\&*&*&0\\&*&*&0\\&&&*
        \end{smallmatrix}\right) \oplus \left(\begin{smallmatrix}
            0&*&*&0\\&0&0&*\\&0&0&*\\&&&0
        \end{smallmatrix}\right)\oplus\left(\begin{smallmatrix}
            0&0&0&*\\&0&0&0\\&0&0&0\\&&&0
        \end{smallmatrix}\right).\]
        The unique open $\rL$-orbit $\Oo_{-1}\subset\fg_{-1}$ is given by 
        \[\Oo_{-1}=\left\{\left(\begin{smallmatrix}
            0&&&\\a&0&0&\\b&0&0&\\0&c&d&0
        \end{smallmatrix}\right)~|~\left( \begin{smallmatrix}
            c&d
        \end{smallmatrix}\right)\left( \begin{smallmatrix}
            a\\b
        \end{smallmatrix}\right)\neq0\right\}.\]

        \item For the parabolic subgroup $\rP<\rSL_4(\C)$ which stabilizes a line $\C\subset\C^4$, block diagonal matrices define a Levi subgroup. With this choice, the associated grading is
        \[\fg=\fg_{-1}\oplus\fg_0\oplus\fg_1=\left(\begin{smallmatrix}
            0&&&\\ *&0&0&0\\ *&0&0&0\\ *&0&0&0
        \end{smallmatrix}\right)\oplus \left(\begin{smallmatrix}
            *&&&\\&*&*&*\\&*&*&*\\&*&*&*
        \end{smallmatrix}\right) \oplus \left(\begin{smallmatrix}
            0&*&*&*\\&0&0&0\\&0&0&0\\&0&0&0
        \end{smallmatrix}\right).\]
        The unique open $\rL$-orbit $\Oo_{-1}\subset\fg_{-1}$ consists of nonzero vectors.
    \end{enumerate}
\end{example}

The next example involves some basic root theory. 

\begin{example}\label{ex: G/B Lie data}
    For $\rP=\rB<\rG$ the Borel subgroup, a Levi subgroup $\rL<\rB$ is a \emph{Cartan} subgroup. The data $(\rL, \rB)$ determines a set of positive simple roots $\{\alpha_1,\cdots,\alpha_{\rk(\fg)}\}.$  The spaces $\fg_j$ consist of direct sums of root spaces $\fg_{\alpha}$ associated to roots $\alpha=\sum_{i=1}^{\rk(\fg)}n_i\alpha_i$ with \emph{length} $\sum n_i=j$. 
    In particular, the $\fg_{-1}$ space consists of the direct sum of negative simple root spaces. In this case, an element 
    \[x=(x_{1},\cdots,x_{\rk(\fg)})\in \fg_{-\alpha_1}\oplus\cdots\oplus\fg_{-\alpha_{\rk(\fg)}}\] is in the open dense $\rL$-orbit $\Oo_{-1}\subset\fg_{-1}$ if and only if $x_{j}\neq0$ for all $j.$
\end{example}

\subsection{Nilpotents and the Jacobson-Morozov Theorem}\label{sec: sl2 subalgebras} 

An element $e\in\fg$ is nilpotent if the linear map $\ad_e:\fg\to\fg$ is a nilpotent endomorphism of $\fg.$  Denote the centralizer of a nilpotent $e\in\fg$ by $V(e):=V$,
\[V=\ker(\ad_e:\fg\to\fg)~.\]

The Jacobson-Morozov theorem defines a bijective correspondence between conjugacy classes of $\fsl_2\C$-subalgebras of $\fg$ and conjugacy classes of nonzero nilpotents in $\fg$ (see for example \cite[\S 3]{CollingwoodMcGovern}). Namely, every non-zero nilpotent $e\in\fg$ can be completed to an $\fsl_2$-triple $\langle f,h,e\rangle\subset\fg,$ where 
\[\xymatrix{[h,e]=2e,&[h,f]=-2f&\text{and}&[e,f]=h,}\]
and if $\langle f,h,e\rangle$, $\langle f',h,e\rangle$ are two such $\fsl_2$-triples then $f=f'.$ Throughout the paper, we will use the letter $\fs$ to denote an $\fsl_2$-subalgebra. 

Fix an $\fsl_2$-triple $\langle f,h,e\rangle=\fs\subset\fg.$  This data decomposes $\fg$ into $\fsl_2\C$-modules and $\ad_h$-weight spaces. Namely, as an $\fsl_2\C$-module,
\begin{equation}
    \label{eq sl2 module decomp}\fg=\bigoplus_{j\geq 0}W_j,
\end{equation}
where $W_j$ is isomorphic to a direct sum of $n_j$-copies (with $n_j\geq0$) of the unique $(j+1)$-dimensional $\fsl_2\C$ representation, and, as $\ad_{h}$-weight spaces,
\begin{equation}
    \label{eq adh wieght decomp}\fg=\bigoplus_{j\in\Z}\fg_j,
\end{equation}
where $\fg_j:=\{x\in\fg~|~\ad_h(x)=jx\}.$
By definition, $e\in\fg_2$ and $f\in\fg_{-2}$. Note also that the trivial representation $W_0$ is a reductive subalgebra since it is the centralizer of $\fs$. We will denote this subalgebra by $W_0:=\fc(\fs):=\fc.$

The subalgebra $\fb_\fs=\langle h,e\rangle\subset\fg$ is a Borel subalgebra of $\fs.$  
The centralizer $V$ of $e$ decomposes $(\fc\oplus\fb_\fs)$-invariantly as 
\begin{equation}
    \label{eq highest weight space decomp}V=\bigoplus\limits_{j\geq 0}V_j,
\end{equation}
where $V _j=\fg_j\cap W_j$ is the highest weight space of $W_j$. The affine subspace 
\[f+V\subset\fg\] 
 is called the \emph{Slodowy slice} through $f.$ It is transverse to the $\rG$-orbit of $f$. 

\begin{definition}
    Fix an $\fsl_2$-triple $\langle f,h,e\rangle$. We call the $(\fb_\fs\oplus\fc)$-invariant decomposition \eqref{eq highest weight space decomp} of the centralizer $V$ of $e$ the \emph{Slodowy data} of $\langle f,h,e\rangle$.
\end{definition}

The following proposition is immediate and will be useful later. 
\begin{proposition}\label{prop: s+b_0 decomp of V and V_2}Let $\bigoplus_jV_j$ be the Slodowy data of an $\fsl_2$-triple $\langle f,h,e\rangle.$ Then the subspace $V_2$ has an $\fc\oplus\fb_\fs$-invariant decomposition
\begin{equation}
    \label{eq V_2 decomposition}V_2=\langle e\rangle\oplus \hat V_2,
\end{equation}
where $\hat V_2$ is the kernel of the linear functional $B_\fg(f,-)\big |_{V_2}$. 
\end{proposition}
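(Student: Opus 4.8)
The plan is to establish the decomposition $V_2 = \langle e\rangle \oplus \hat V_2$ by showing that $B_\fg(f,e) \neq 0$, so that the linear functional $B_\fg(f,-)\big|_{V_2}$ is nonzero on the line $\langle e\rangle$, and then checking that the kernel $\hat V_2$ of this functional is invariant under $\fc \oplus \fb_\fs$. First I would recall that for an $\fsl_2$-triple $\langle f,h,e\rangle$ the Killing form pairs the weight space $\fg_2$ with $\fg_{-2}$ nondegenerately (weight spaces $\fg_i$ and $\fg_j$ are $B_\fg$-orthogonal unless $i+j=0$), and since $e \in \fg_2$, $f \in \fg_{-2}$ with $[e,f]=h \neq 0$, a direct computation gives $B_\fg(f,e) = \tfrac{1}{2}B_\fg(h,h) \neq 0$ (using $B_\fg(f,e) = B_\fg(f,\tfrac12[h,e]) = \tfrac12 B_\fg([f,h],e)$... more cleanly $B_\fg(h,[e,f]) = B_\fg(h,h)$ and $B_\fg(h,[e,f]) = B_\fg([h,e],f) = 2B_\fg(e,f)$). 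In particular the functional $B_\fg(f,-)\big|_{V_2}$ is not identically zero, hence $\hat V_2 := \ker\bigl(B_\fg(f,-)\big|_{V_2}\bigr)$ has codimension one in $V_2$, and since $e \notin \hat V_2$ we get the vector space direct sum $V_2 = \langle e\rangle \oplus \hat V_2$.

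Next I would verify the invariance of $\hat V_2$ under $\fc \oplus \fb_\fs$. For $c \in \fc = W_0$, since $c$ centralizes the whole triple we have $[c,f]=0$, so for $v \in V_2$, $B_\fg(f,[c,v]) = -B_\fg([c,f],v) = 0$; thus $\fc$ preserves $\ker B_\fg(f,-)\big|_{V_2}$ trivially — in fact $\fc$ acts on $V_2$ preserving this functional. For $\fb_\fs = \langle h,e\rangle$: the element $h$ acts on $V_2$ as the scalar $2$ (it is the highest weight space of weight-$2$), so it preserves any subspace. For $e$: note $[e, V_2] \subset \fg_4 \cap W_j$, but $e$ maps a highest weight vector to zero, so $[e,V_2] = 0$ as $V = \ker(\ad_e)$; hence $e$ acts trivially on $V_2$ and preserves $\hat V_2$ vacuously. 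So $\hat V_2$ is automatically $\fb_\fs$-invariant. The only genuine content is the $\fc$-invariance together with the fact that $\langle e \rangle$ itself is $\fc \oplus \fb_\fs$-invariant (clear: $e$ is a weight-$2$ highest weight vector fixed by $\fc$), which combined with the codimension-one computation gives the claimed invariant direct sum decomposition \eqref{eq V_2 decomposition}.

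The step requiring the most care is making sure the pairing argument is airtight: one must confirm that $B_\fg(f,-)$ restricted to $V_2 \subset \fg_2$ is the relevant object, and that $e \in V_2$ (which holds since $e \in \fg_2$ and $\ad_e(e) = 0$, so $e$ lies in the centralizer $V$, and $V_2 = \fg_2 \cap W_2$ with $e$ the highest weight vector of the adjoint-type summand containing $h,e,f$). Once $B_\fg(f,e) \neq 0$ is in hand, everything else is formal. I do not anticipate a serious obstacle here — this proposition is genuinely "immediate" as the authors state — but I would be slightly careful to phrase the invariance so that it is clear $\fc \oplus \fb_\fs$ acts on $V_2$ in the first place (which follows from the $(\fc \oplus \fb_\fs)$-invariance of the whole Slodowy data decomposition \eqref{eq highest weight space decomp} already established above).
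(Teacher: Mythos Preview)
Your argument is correct and complete; the paper itself omits the proof entirely, declaring the proposition ``immediate,'' so your expansion is exactly the natural way to fill in the details the authors had in mind. The only point worth tightening is the nonvanishing $B_\fg(h,h)\neq 0$, which you can justify by noting $B_\fg(h,h)=\Tr(\ad_h^2)=\sum_j j^2\dim\fg_j>0$ since $e\in\fg_2$ forces $\dim\fg_2\geq 1$.
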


The decomposition \eqref{eq adh wieght decomp} associates a parabolic subalgebra $\fp$ to an $\fsl_2$-triple $\langle f,h,e\rangle$: 
\[\fp=\bigoplus\limits_{j\geq 0}\fg_j.\]
Moreover, this parabolic comes with a choice of Levi subalgebra $\fl=\fg_0\subset\fp.$ Note that $\fp$ also decomposes as
\[\fp=V\oplus \ad_f(\fu).\]

For grading \eqref{eq adh wieght decomp}, the following theorem relates the open dense $\rL$-orbit $\Oo_{-2}\subset\fg_{-2}$ with the $\fsl_2$-triple. It is a combination of results of Kostant and Malcev, see \cite[\S 10]{knappbeyondintro} or \cite[\S 3.4]{CollingwoodMcGovern} for details. 

\begin{theorem} \label{thm Kostant-Malcev open orbit of sl2 triple}
Let $\langle f,h,e\rangle\subset\fg$ be an $\fsl_2$-triple, and consider the $\Z$-grading $\fg=\bigoplus_{j\in\Z}\fg_j$ from \eqref{eq adh wieght decomp}. Let $\fp$ and $\fl$ be the associated parabolic and Levi subalgebras with associated Lie subgroups $\rL<\rP<\rG$. Then the nilpotent elements $e$ and $f$ are contained in the unique open dense $\rL$-orbits $\Oo_{2}\subset\fg_2$ and $\Oo_{-2}\subset\fg_{-2}$, respectively. Moreover, the $\rL$-stabilizer of $e$ is the Lie group $\rC<\rG$ which centralizes the $\fsl_2$-subalgebra. 
\end{theorem}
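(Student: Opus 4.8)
The plan is to establish Theorem \ref{thm Kostant-Malcev open orbit of sl2 triple} by reducing to the known Kostant--Malcev theory of $\fsl_2$-triples and then transcribing the statements about open orbits into the language of the parabolic $\rP$ and its Levi $\rL$ determined by the grading \eqref{eq adh wieght decomp}. First I would recall the Kostant--Malcev theorem in its classical form: for an $\fsl_2$-triple $\langle f,h,e\rangle$ with associated $\ad_h$-grading $\fg=\bigoplus_j\fg_j$, the centralizer $Z_\rG(h)$ has Lie algebra $\fg_0=\fl$, and Kostant's theorem on the structure of $\fsl_2$-triples (see \cite[\S 3.4]{CollingwoodMcGovern}) gives that $e\in\fg_2$ is a $Z_\rG(h)$-orbit whose tangent space at $e$ is all of $\fg_2$; concretely, $[\fg_0,e]=\fg_2$, which follows from $\ad_e:\fg_0\to\fg_2$ being surjective (equivalently $\ad_f:\fg_2\to\fg_0$ injective, itself a consequence of $\fsl_2$-representation theory applied to \eqref{eq sl2 module decomp}: every nontrivial irreducible $W_j$ has its weight-$2$ space mapped injectively by $\ad_f$ into its weight-$0$ space since $j\ge 2$ there, while $W_0=\fc$ and $W_1$ contribute nothing to $\fg_2$). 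Hence the $\rL$-orbit of $e$ in $\fg_2$ is open, and since $\fg_2$ is a single irreducible-type piece under $\rL$ with a unique open orbit by Vinberg's theorem as recalled in \S\ref{sec: parabolics}, that open orbit must be $\Oo_2$ and it contains $e$. The identical argument applied to $f\in\fg_{-2}$, using $\ad_e:\fg_{-2}\to\fg_0$ injective, shows $f\in\Oo_{-2}$.

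Next I would identify the $\rL$-stabilizer of $e$. By definition $\rL=Z_\rG(h)$ (the connected centralizer, or the Levi subgroup attached to the grading), so the $\rL$-stabilizer of $e$ is $Z_\rG(h)\cap Z_\rG(e)$. An element stabilizing both $h$ and $e$ also stabilizes $f$: this is precisely the uniqueness clause of the Jacobson--Morozov theorem recalled above (if $\langle f,h,e\rangle$ and $\langle f',h,e\rangle$ are both $\fsl_2$-triples then $f=f'$), applied to the conjugate triple $\langle g\cdot f, h, e\rangle$ for $g\in Z_\rG(h)\cap Z_\rG(e)$. Therefore $Z_\rG(h)\cap Z_\rG(e)$ centralizes all of $\fs=\langle f,h,e\rangle$, i.e. is contained in $\rC=Z_\rG(\fs)$; the reverse inclusion is immediate since anything centralizing $\fs$ centralizes $h$ and $e$. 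This gives $\Stab_\rL(e)=\rC$, which is the last assertion.

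For the organization of the write-up: I would first state the surjectivity/injectivity of the relevant $\ad$-maps as a short lemma or inline computation from $\fsl_2$-representation theory using \eqref{eq sl2 module decomp} and \eqref{eq adh wieght decomp}; then deduce openness of the $\rL$-orbits of $e$ and $f$; then invoke Vinberg's uniqueness of the open orbit (already cited in the text, \cite[Theorem 10.19]{knappbeyondintro}) to conclude these are $\Oo_2$ and $\Oo_{-2}$; and finally handle the stabilizer via Jacobson--Morozov uniqueness. I expect the main obstacle to be purely expository rather than mathematical: one must be careful about which group is meant by $\rL$ (the abstract Levi quotient $\rP/\rU$ versus a Levi subgroup $\rL<\rP$, here canonically $Z_\rG(h)$) and correspondingly about connectedness, since $Z_\rG(h)$ need not be connected in a non-simply-connected $\rG$ — but because the open orbit and its point $e$ are detected at the Lie algebra level, the orbit statement is insensitive to this, and for the stabilizer claim one should interpret $\rC$ consistently as $Z_\rG(\fs)$ with the same component conventions as $\rL$. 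I would flag this and then cite \cite[\S 10]{knappbeyondintro}, \cite[\S 3.4]{CollingwoodMcGovern} for the remaining standard details.
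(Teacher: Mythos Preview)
The paper does not prove this theorem: it is stated as a known result, with the sentence ``It is a combination of results of Kostant and Malcev, see \cite[\S 10]{knappbeyondintro} or \cite[\S 3.4]{CollingwoodMcGovern} for details'' serving in lieu of a proof. Your proposal therefore goes well beyond what the paper does, supplying a correct self-contained argument where the paper only cites.

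Your argument is sound. The key step $[\fg_0,e]=\fg_2$ follows exactly as you say from $\fsl_2$-representation theory applied to \eqref{eq sl2 module decomp}, and combined with Vinberg's uniqueness of the open orbit (already invoked in \S\ref{sec: parabolics}) this pins down $e\in\Oo_2$ and symmetrically $f\in\Oo_{-2}$. The stabilizer computation via Jacobson--Morozov uniqueness is also correct. One small sharpening of your connectedness caveat: since $h$ is the derivative of a one-parameter subgroup $\C^*\to\rG$ (the image of the diagonal torus of $\rS$), the centralizer $Z_\rG(h)$ is the centralizer of a torus in a connected group and is therefore automatically connected; thus $\rL=Z_\rG(h)$ with no ambiguity, and your identification $\Stab_\rL(e)=Z_\rG(h)\cap Z_\rG(e)=\rC$ goes through without further component bookkeeping.
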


\subsection{Even Jacobson-Morozov parabolics}
\label{sec: even JM parabolics}
A parabolic subalgebra which arises from an $\fsl_2$-subalgebra is called a \emph{Jacobson-Morozov parabolic} (abbreviated JM-parabolic). Not all parabolic subalgebras are JM-parabolics. For example, the stabilizer of a line in $\C^n$ is not a JM-parabolic when $n>2.$ Moreover, different conjugacy classes of $\fsl_2$-triples can define the same JM-parabolic.  In this paper, the following notion of an \emph{even} $\fsl_2$-triple is essential.

\begin{definition} \label{def: even sl2}
    An $\fsl_2$-triple $\langle f,h,e\rangle\subset\fg$ is called \emph{even} if $\fg_1=0$ in the decomposition \eqref{eq adh wieght decomp}. 
\end{definition}

\begin{remark}
    \label{rem: odd dim sl2 reps for even sl2} 
    An $\fsl_2$-triple $\langle f,h,e\rangle\subset\fg$ is even if and only if $\fg_{2j+1}=0$ for all $j$ in decomposition \eqref{eq adh wieght decomp}. Equivalently, the $\fsl_2\C$-module decomposition from \eqref{eq sl2 module decomp} consists only of odd dimensional irreducible $\fsl_2$-representations. Another equivalent definition of $\fs=\langle f,h,e\rangle$ being even is that the associated subgroup $\rS<\rG_\Ad$ of the adjoint group is isomorphic to $\rPSL_2\C.$
\end{remark}



\begin{definition}\label{def: even JM parabolic}
A parabolic subgroup $\rP<\rG$ will be called an \emph{even JM-parabolic} if it arises from an even $\fsl_2$-triple. 
\end{definition}

\begin{remark}\label{rem: comparing gradings for even JM-parabolics}
Since the JM-parabolic $\fp$ associated to an $\fsl_2$-triple $\langle f,h,e\rangle$ comes with a choice of Levi subalgebra, we have two $\Z$-gradings. Namely, the $\ad_h$-weight space decomposition \eqref{eq adh wieght decomp} and the associated graded of the canonical filtration \eqref{eq associated graded Z grading of parabolic}. For even JM-parabolics, the $j^{th}$-graded piece of \eqref{eq associated graded Z grading of parabolic} is the $2j^{th}$-graded piece of \eqref{eq adh wieght decomp}. 
\end{remark}

The following proposition is an easy consequence of \cite[\S 3.8]{CollingwoodMcGovern}.
\begin{proposition}
There is a one-to-one correspondence between conjugacy classes of even $\fsl_2$-triples and conjugacy classes of even JM-parabolics.
\end{proposition}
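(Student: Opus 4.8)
The plan is to establish the one-to-one correspondence by constructing mutually inverse maps between the two sets of conjugacy classes, using the Jacobson-Morozov correspondence as the backbone. First I would recall from \cite[\S 3.8]{CollingwoodMcGovern} (and the Jacobson-Morozov theorem recalled in \S\ref{sec: sl2 subalgebras}) that there is already a bijective correspondence between conjugacy classes of nonzero nilpotents (equivalently, conjugacy classes of $\fsl_2$-triples) and certain combinatorial data, and that among these the \emph{even} nilpotents form a distinguished subset. So one direction of the map is tautological: an even $\fsl_2$-triple $\langle f, h, e\rangle$ determines, via the $\ad_h$-grading \eqref{eq adh wieght decomp} and the recipe $\fp = \bigoplus_{j\geq 0}\fg_j$, an even JM-parabolic $\rP < \rG$, and conjugate $\fsl_2$-triples plainly yield conjugate parabolics. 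This gives a well-defined map from conjugacy classes of even $\fsl_2$-triples to conjugacy classes of even JM-parabolics, which is surjective by the very definition of ``even JM-parabolic'' (Definition \ref{def: even JM parabolic}).

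The substance is \emph{injectivity}: if two even $\fsl_2$-triples $\langle f, h, e\rangle$ and $\langle f', h', e'\rangle$ give rise to conjugate parabolic subgroups, they must be conjugate as $\fsl_2$-triples. Here is where I would invoke Theorem \ref{thm Kostant-Malcev open orbit of sl2 triple}: the nilpotent $e$ lies in the unique open dense $\rL$-orbit $\Oo_2 \subset \fg_2$. For an \emph{even} triple, $\fg_2$ is precisely the $j=1$ graded piece of the canonical parabolic filtration (Remark \ref{rem: comparing gradings for even JM-parabolics}), i.e.\ $\fg_2 \cong \fu/\rU^2$-level data intrinsic to $\rP$, and its open dense $\rL$-orbit is intrinsic to $\rP$ as well (by Vinberg's theorem, as recalled after \eqref{eq associated graded Z grading of parabolic}). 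Hence, up to conjugating one triple so that the two parabolics literally coincide, both $e$ and $e'$ lie in the \emph{same} open dense $\rL$-orbit inside $\fg_2$, so there is $\ell \in \rL$ with $\Ad_\ell(e) = e'$; replacing $\langle f, h, e\rangle$ by its $\ell$-conjugate we may assume $e = e'$. Now the Jacobson-Morozov rigidity statement (the second half of the theorem recalled in \S\ref{sec: sl2 subalgebras}: given $e$, the completing $h$ and $f$ are determined up to the action of the centralizer, and in fact $f = f'$ once $h = h'$) forces $\langle f, h, e\rangle$ and $\langle f', h', e'\rangle$ to be conjugate — more precisely, any two $\fsl_2$-triples with the same nilpositive element $e$ are conjugate under the unipotent radical of the centralizer of $e$, by Kostant's theorem. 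This yields injectivity and completes the bijection.

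The main obstacle, and the point requiring genuine care rather than bookkeeping, is the step identifying the open dense $\rL$-orbit in $\fg_2$ intrinsically with data of the parabolic $\rP$ alone (so that ``conjugate parabolics'' really does let us arrange $e$ and $e'$ in a common $\rL$-orbit). This is where evenness is used crucially: for a non-even triple one has $\fg_1 \neq 0$, the Kostant-Malcev open orbit sits in $\fg_2$ rather than in the first filtration layer $\fg_1 = \fu/\rU^2$, and there is no clean way to read off the relevant $\rL$-orbit from $\rP$ as an abstract subgroup — indeed different conjugacy classes of $\fsl_2$-triples can share a JM-parabolic in the non-even case, as remarked in \S\ref{sec: even JM parabolics}. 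For even triples, Remark \ref{rem: comparing gradings for even JM-parabolics} makes the two gradings compatible (the $j$th piece of \eqref{eq associated graded Z grading of parabolic} is the $2j$th piece of \eqref{eq adh wieght decomp}), so $\Oo_2 \subset \fg_2 = \fg^1/\fg^2$ is exactly the canonical open dense $\rP$-orbit from Vinberg's theorem, and the argument closes. I would present the proof in this order: (1) the forward map and its surjectivity (one sentence, from the definitions); (2) reduction of injectivity, via compatibility of gradings, to the statement that conjugate parabolics force $e, e'$ into a common $\rL$-orbit; (3) conclusion via Theorem \ref{thm Kostant-Malcev open orbit of sl2 triple} and Kostant's rigidity of $\fsl_2$-triples completing a fixed nilpotent.
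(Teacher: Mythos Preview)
The paper gives no proof: it simply declares the proposition ``an easy consequence of \cite[\S 3.8]{CollingwoodMcGovern}.'' That section of Collingwood--McGovern is the weighted Dynkin diagram classification of nilpotent orbits. The intended argument is combinatorial: a conjugacy class of $\fsl_2$-triples corresponds to a weighted Dynkin diagram with labels in $\{0,1,2\}$; the triple is even precisely when all labels lie in $\{0,2\}$; the associated JM-parabolic is the standard parabolic whose Levi is generated by the simple roots with label $0$. For even triples the diagram is therefore determined by its support, which is exactly the data of a standard parabolic --- hence the bijection.

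Your argument is different and more self-contained: you work directly with the open-orbit theorem (Theorem \ref{thm Kostant-Malcev open orbit of sl2 triple}) and Kostant's rigidity for $\fsl_2$-triples with a fixed nilpositive, avoiding the full Dynkin diagram classification. This has the virtue of using only ingredients already stated in the paper. One small gap to patch: after conjugating so the two parabolics coincide, the two triples still carry \emph{different} Levi subalgebras $\fg_0 = \ker(\ad_h)$ and $\fg_0' = \ker(\ad_{h'})$, hence different gradings and different copies of ``$\fg_2$''; you cannot yet say $e$ and $e'$ lie in a common $\rL$-orbit. Insert one sentence: Levi subgroups of $\rP$ are conjugate under the unipotent radical $\rU$, so conjugate the second triple by some $u\in\rU$ (which preserves $\fp$) to align the Levis and hence the gradings. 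After that your appeal to Theorem \ref{thm Kostant-Malcev open orbit of sl2 triple} goes through cleanly.
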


Fix an even $\fsl_2$-triple $\langle f,h,e\rangle\subset\fg$. Let $1=m_1<m_2<\cdots<m_N$ be such that the $\fsl_2$-representations $W_{2m_j}$ from \eqref{eq sl2 module decomp} are nonzero. We have
\begin{equation}
    \label{eq even sl2 decomp data} \fg=\fc\oplus\bigoplus\limits_{j=1}^NW_{2m_j}.
\end{equation}

The Slodowy data from \eqref{eq highest weight space decomp} is 
\begin{equation}
    \label{eq: slodowy data}V=\fc\oplus \bigoplus_{j=1}^NV_{2m_j}.
\end{equation}

\begin{example}\label{ex: sl2 data for even JM in SL4}
    The first three parabolic subgroups of $\rSL_4\C$ described in Example \ref{ex: SL4 Lie data} are even JM-parabolics. In fact, up to conjugacy these are the only even JM-parabolic subgroups of $\rSL_4\C$. Below, we use the same numbering as the items in Example \ref{ex: SL4 Lie data}.

    \begin{enumerate}
        \item An associated $\fsl_2$-triple is 
        \[\fs=\langle f,h,e\rangle=\left\langle\left(\begin{smallmatrix}
            0&0&&\\1&0&0&\\&1&0&0\\&&1&0
        \end{smallmatrix}\right),\left(\begin{smallmatrix}
            3&0&&\\0&1&0&\\&0&-1&0\\&&0&-3
        \end{smallmatrix}\right),\left(\begin{smallmatrix}
            0&3&&\\0&0&4&\\&0&0&3\\&&0&0
        \end{smallmatrix}\right)\right\rangle.
        \]
        The centralizer of $\fs$ is the center, $\rC=\langle \sqrt{-1}\Id\rangle.$
        The associated $\fs$-module decomposition \eqref{eq sl2 module decomp} is
        \[\fsl_4\C=W_2\oplus W_4\oplus W_6,\]
        with multiplicities $(n_2,n_4,n_6)=(1,1,1).$ The Slodowy data is
        \[V_2\oplus V_4\oplus V_6=\left\langle\left(\begin{smallmatrix}
            0&3&0&0\\0&0&4&0\\0&0&0&3\\0&0&0&0
        \end{smallmatrix}\right) \right\rangle\oplus \left\langle\left(\begin{smallmatrix}
            0&0&1&0\\0&0&0&1\\0&0&0&0\\0&0&0&0
        \end{smallmatrix}\right)\right\rangle\oplus \left\langle \left(\begin{smallmatrix}
            0&0&0&1\\0&0&0&0\\0&0&0&0\\0&0&0&0
        \end{smallmatrix}\right)\right\rangle,\]
        and $\hat V_2=0$. 

        \item An associated $\fsl_2$-triple is given by $2\times 2$-block matrices
        \[\fs=\langle f,h,e\rangle=\left\langle\left(\begin{smallmatrix}
            0&0\\\Id&0
        \end{smallmatrix}\right),\left(\begin{smallmatrix}
            \Id&0\\0&-\Id
        \end{smallmatrix}\right),\left(\begin{smallmatrix}
            0&\Id\\0&0
        \end{smallmatrix}\right)\right\rangle,
        \]
        The centralizer of $\fs$ is given by 
        \[\rC=\left\{\left(\begin{smallmatrix}
            A&0\\0&A
        \end{smallmatrix}\right)~|~\det(A)^2=1\right\}\cong\rSL^\pm_2\C.\]
        The associated $\fs$-module decomposition \eqref{eq sl2 module decomp} is 
        \[\fsl_4\C=W_0\oplus W_2,\]
        with multiplicities $(n_0,n_2)=(3,4).$ The Slodowy data is 
        \[V_0\oplus V_2=\left(\begin{smallmatrix}
            X&0\\0&X
        \end{smallmatrix}\right)\oplus 
            \left(\begin{smallmatrix}
            0&Y\\0&0
        \end{smallmatrix}\right),\]
        where $\Tr(X)=0$, and the subspace $\hat V_2$ is given by $\Tr(Y)=0.$ 

     \item An associated $\fsl_2$-triple is given by 
        \[\fs=\langle f,h,e\rangle=\left\langle\left(\begin{smallmatrix}
            0&0&0&0\\1&0&0&0\\0&0&0&0\\0&1&0&0
        \end{smallmatrix}\right), \left(\begin{smallmatrix}
            2&0&0&0\\0&0&0&0\\0&0&0&0\\0&0&0&-2
        \end{smallmatrix}\right),\left(\begin{smallmatrix}
            0&2&0&0\\0&0&0&2\\0&0&0&0\\0&0&0&0
        \end{smallmatrix}\right) \right\rangle.\]
        The centralizer of $\fs$ is
        \[\rC=\left\{\left(\begin{smallmatrix}
            \alpha&&&\\&\alpha&&\\&&\alpha^{-3}&\\&&&\alpha
        \end{smallmatrix}\right)~|~\alpha\in\C^*\right\}\cong\rGL_1\C.\]
        The associated $\fs$-module decomposition \eqref{eq sl2 module decomp} is 
        \[\fsl_4\C=W_0\oplus W_2\oplus W_4,\]
        with multiplicities $(n_0,n_2,n_4)=(1,3,1).$ The Slodowy data is
        \[V_0\oplus V_2\oplus V_4=\left(\begin{smallmatrix}
            a&&&\\&a&&\\&&-3a&\\&&&a
        \end{smallmatrix}\right)\oplus \left(\begin{smallmatrix}
            0&x&y&0\\&&&x\\&&&z\\&&&0
        \end{smallmatrix}\right)\oplus \left(\begin{smallmatrix}
            0&0&0&w\\&&&0\\&&&0\\&&&0
        \end{smallmatrix}\right),\]
 and the subspace $\hat V_2$ is defined by setting $x=0.$ 
        \end{enumerate}
\end{example}

The next example sketches the construction and states some properties of principal $\fsl_2$-triples, for more information see \cite{ptds} or \cite[\S 4]{CollingwoodMcGovern}. 

\begin{example}
\label{ex: G/B principal sl2 data}  
The Borel subgroup $\rB<\rG$ is an even JM-parabolic. An associated even $\fsl_2$-triple is called a \emph{principal $\fsl_2$-subalgebra}. 
Fix a Cartan subalgebra $\fg_0\subset\fg$ and a set of simple roots $\{\alpha_1,\cdots,\alpha_{\rk(\fg)}\}.$ For each $\alpha_j$, choose $e_j\in\fg_{\alpha_j}$ and $f_j\in\fg_{-\alpha_j}$ so that $\langle f_j,[e_j,f_j],e_j\rangle$ is an $\fsl_2$-triple. Choose $h\in\fg_0$ so that $\alpha_{j}(h)=2$ for each $j$ and write $h=\sum_ja_j[e_j,f_j].$ Then 
\[\fs=\langle f,h,e\rangle=\left\langle \sum_j f_j~,~ h~,~ \sum_ja_je_j\right\rangle\]
is a principal $\fsl_2$-triple. In particular, note that the projection of $f$ onto each $\fg_{-\alpha_j}$ is nonzero, so $f\in\Oo_{-1}\subset \fg_{-1}.$

In this case, the centralizer $\rC<\rG$ of $\fs$ is the center of the group $\rG,$ thus $\fc=W_0=\{0\}.$ The associated $\fs$-module decomposition is $\fg=\bigoplus_j W_{2m_j}$ where $\{m_j\}$ are the exponents of $\fg.$ The multiplicities satisfy $n_{m_j}=1$ for all $j$, with the exception that $n_{4n-2}=2$ for $\fg=\fso_{2n}(\C)$.
\end{example}

Finally, we recall a generalization of a theorem of Kostant proven by Lynch in \cite[Theorem 1.2]{LynchThesis}. This theorem is the fundamental Lie theoretic ingredient which will allow us to understand $(\rG, \rP)$-opers for $\rP<\rG$ an even JM-parabolic in terms of simpler data, namely the Slodowy category.

\begin{theorem}\label{thm: Kostant generalization}
Let $\rP<\rG$ be an even JM-parabolic subgroup determined by an $\fsl_2$-triple $\langle f,h,e\rangle.$  Let 
$\rU^{m}< \rU^{m-1}<\cdots< \rU^{1}< \rP$ be the filtration of $\rP$ by unipotent subgroups and $\fg^{m}\subset\cdots\subset \fg^{0}=\fp$ be the corresponding canonical filtration of $\fp$.  
Then, for $V=\ker(\ad_e:\fg\to\fg),$ the map
\[\xymatrix@R=0em{\rU^i\times\{f+V\}\ar[r]&f+V+\fg^{i-1}\\(u,f+v)\ar@{|->}[r]&\Ad(u)(f+v)}\]
is an isomorphism of affine algebraic varieties for every $1\leq i\leq m$.
\end{theorem}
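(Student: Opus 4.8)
The plan is to prove the statement by induction on $i$, starting from $i=m$ and descending to $i=1$, with the base case being Lynch's version of Kostant's theorem (the case of the full Slodowy slice mapping onto the whole of $\fp$, or rather the statement one step above the bottom of the filtration). Actually, it is cleaner to argue by downward induction: for $i=m$ the group $\rU^m$ is the last nontrivial piece, $\fg^{m-1}\supset\fg^m=\fu^m$, and the claim that $(u,f+v)\mapsto\Ad(u)(f+v)$ is an isomorphism $\rU^m\times\{f+V\}\to f+V+\fg^{m-1}$ should follow from a direct linearization: since $\ad_f:\fu^m\to\fg_{-2}\cap(\text{appropriate piece})$ and the grading is even, one checks the differential is an isomorphism of affine spaces and that the map is injective using the $\fsl_2$-representation theory (the decomposition $\fg=V\oplus\ad_f(\fu)$ from the text, restricted to the relevant graded pieces). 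The key structural input is the decomposition $\fp = V\oplus\ad_f(\fu)$ together with the refinement coming from Remark \ref{rem: comparing gradings for even JM-parabolics} that $\fg^{i}\cong\bigoplus_{j\ge i}\fg_{2j}$.

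The inductive step goes as follows. Assume the map $\rU^{i+1}\times\{f+V\}\to f+V+\fg^{i}$ is an isomorphism of affine varieties. We want to upgrade to $\rU^i$. Write $\rU^i = \rU^i_{/i+1}\ltimes\rU^{i+1}$ in a way compatible with the grading, so that $\rU^i_{/i+1} = \exp(\fg_{2i})$ (using evenness, $\fg_{2i+1}=0$, so the $i$-th graded piece of $\fu$ in the parabolic filtration is exactly $\fg_{2i}$ in the $\ad_h$-grading). Given a target $f+v+\xi$ with $v\in V$, $\xi\in\fg^{i-1}$, I first want to solve for the $\fg_{2(i-1)}$-component: the map $\fg_{2i}\to\fg_{2(i-1)}$, $X\mapsto [X,f]=-\ad_f(X)$, is injective on $\fg_{2i}$ by $\fsl_2$-representation theory (since $\ad_f$ is injective on strictly negative... wait, on strictly positive weight spaces; more precisely $\ad_f:\fg_{2i}\to\fg_{2i-2}$ is injective for $i\ge 1$ when restricted to... actually one uses that $\ker\ad_f\cap\fg_{>0}$ relates to lowest weight vectors), and its image is a complement to $V_{2(i-1)} = V\cap\fg_{2(i-1)}$ inside $\fg_{2(i-1)}$. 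So there is a unique $X\in\fg_{2i}$ with $\Ad(\exp X)(f+v+\xi)$ having its $\fg_{2(i-1)}$-component lying in $V_{2(i-1)}$; then $\Ad(\exp X)(f+v+\xi)\in f+V+\fg^i$ up to absorbing the $V_{2(i-1)}$-part into $v$, and we apply the inductive hypothesis. Injectivity of the whole composite and the fact that it is a polynomial map with polynomial inverse (hence an isomorphism of affine varieties, not just a bijection) follow by tracking that at each stage the solved-for element is a polynomial function of the data — this is where one must be careful that $\exp$ on a unipotent group and its inverse are polynomial, which holds since $\rU^i$ is unipotent.

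The main obstacle, and the place requiring genuine care rather than bookkeeping, is the claim that at each graded level the map $\ad_f:\fg_{2i}\to\fg_{2i-2}$ has image exactly complementary to $V_{2i-2}$, so that the "Slodowy slice directions" $V$ and the "group directions" $\ad_f(\fu)$ together fill up $\fp$ compatibly with the filtration — i.e., the graded refinement $\fg_{2k} = V_{2k}\oplus\ad_f(\fg_{2k+2})$ for each $k\ge 0$. This is precisely the $\fsl_2$ Clebsch-Gordan statement: in each irreducible summand $W_{2m}$, the weight-$2k$ space is one-dimensional (for $0\le k\le m$) and is spanned by $\ad_f^{m-k}$ of the highest weight vector, so $\ad_f$ maps the weight-$(2k+2)$ line isomorphically onto... no: $\ad_f$ maps weight $2k+2$ to weight $2k$, and within an irreducible this is an isomorphism of lines precisely when both are nonzero, i.e., $0\le k+1\le m$; and the highest weight vector at weight $2m$ is the one not in the image, which is exactly $V_{2m}\subset\fg_{2m}$. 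Assembling this over all irreducible summands gives the graded decomposition, and that is the engine that makes the descending induction close. Everything else — that $\rU^i/\rU^{i+1}\cong\fg_{2i}$, that the exponential and multiplication maps are polynomial, that a bijective polynomial map of affine spaces with polynomial inverse is an isomorphism of varieties — is standard, but the proof should state these explicitly and cite Lynch's theorem \cite{LynchThesis} for the overall strategy, since the statement here is essentially a filtered/graded sharpening of Kostant's transversality theorem adapted to the even case.
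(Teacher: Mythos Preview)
The paper does not prove this theorem: it is quoted with attribution to Lynch \cite[Theorem 1.2]{LynchThesis}, and the remark immediately following it points to \cite[Lemma 2.1]{QuantizingSlodowy} for a more general version. There is therefore no in-paper argument to compare your proposal against.

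That said, your sketch is essentially the standard argument (and in outline is what one finds in the cited references): the engine is precisely the graded decomposition $\fg_{2k}=V_{2k}\oplus\ad_f(\fg_{2k+2})$ for $k\ge 0$, which lets you strip off one layer of the filtration at a time. A few places to tighten. First, do not invoke Lynch's theorem as your base case --- that \emph{is} the statement you are proving; the clean base is $i=m+1$ (vacuous, since $\rU^{m+1}=1$ and $\fg^m\subset V$), or $i=m$ handled directly. Second, the claimed semidirect product $\rU^i=\exp(\fg_{2i})\ltimes\rU^{i+1}$ is not literally correct, since $\exp(\fg_{2i})$ is generally not a subgroup; what you actually need, and what suffices, is that the map $\fg_{2i}\times\rU^{i+1}\to\rU^i$, $(X,u')\mapsto\exp(X)u'$, is an isomorphism of \emph{varieties}, which holds because $\rU^{i+1}\trianglelefteq\rU^i$ with abelian quotient $\cong\fg_{2i}$. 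Third, your hesitation over signs and directions (``$\exp X$ or $\exp(-X)$'') is symptomatic: fix at the outset that you are constructing the inverse map $y\mapsto(u,f+v)$, compute the degree-$2(i-1)$ component of $\Ad(\exp(-X))y$ once carefully, and the rest is bookkeeping. With those fixes the induction closes and yields polynomial formulas both ways, hence an isomorphism of affine varieties.
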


\begin{remark}
    In \cite[Lemma 2.1]{QuantizingSlodowy}, a more general version of Theorem \ref{thm: Kostant generalization} is proven for any (not necessarily even) $\fsl_2$-triple together with a choice of isotropic subspace of $\fg_1$ from \eqref{eq adh wieght decomp}. It would be interesting to consider how such data could be used to generalize our results to arbitrary parabolics.
\end{remark}

\section{Geometric preliminaries}

In this section, we rapidly review the theory of connections on principal bundles, and fix some notation which we shall use throughout the paper. 

\subsection{Bundles and connections}\label{sec: gauge theory}
Throughout this paper, all principal bundles are taken to be \emph{right} principal bundles.  If $\rG$ is a Lie group and $E_{\rG}$ is a principal $\rG$-bundle, then we denote the $\rG$-invariant vertical vector field on $E_{\rG}$ induced by $Y\in \fg$ by $Y^\sharp.$  

If $V$ is a complex manifold equipped with a \emph{left} holomorphic $\rG$-action, we will suppress the action and denote the associated holomorphic fiber bundle by $E_{\rG}[V],$ remembering that the structure group of this bundle is $\rG.$  The usual example arises from a Lie group acting on some vector subspace/quotient of a Lie algebra: for example, $E_{\rG}[\fg]$ is the holomorphic vector bundle determined by the adjoint action of $\rG$ on $\fg.$  

For the rest of this section, $E_{\rG}$ will be a holomorphic principal $\rG$-bundle over a \emph{Riemann surface} $X$ and $\Kk$ will denote the holomorphic cotangent bundle of $X.$  
\begin{definition}
A holomorphic connection $\omega$ on $E_\rG$ is a $\fg$-valued holomorphic $1$-form $\omega: TE_{\rG}\to \fg$ such that
\begin{enumerate}
\item $R_{g}^*\omega=\Ad(g^{-1})\circ \omega$ for all $g\in \rG.$ 
\item $\omega(Y^{\sharp})=Y$ for all $Y\in \fg.$  
\end{enumerate}
\end{definition}

The curvature $F(\omega)=d\omega+\frac{1}{2}[\omega, \omega]$ of a holomorphic connection $\omega$ defines an $E_{G}[\fg]$-valued holomorphic $(2,0)$-form on $X,$ and therefore always vanishes. Thus, all holomorphic connections on $E_\rG$ are flat (i.e. integrable), and we refer to the pair $(E_\rG,\omega)$ as a holomorphic flat $\rG$-bundle.

\begin{definition}
	The category $\Ff_{X}(\rG)$ of holomorphic flat $\rG$-bundles on $X$ has objects $(E_\rG,\omega)$, where 
	\begin{itemize}
		\item $E_\rG$ is a holomorphic $\rG$-bundle on $X,$
		\item $\omega$ is a holomorphic connection on $E_\rG.$
	\end{itemize}
	Morphisms $\Phi:(E_\rG,\omega)\to(F_\rG,\eta)$ are given by isomorphisms $\Phi: E_{\rG}\to F_{\rG}$ covering the identity map on $X,$ such that $\Phi^{*}\eta=\omega.$
\end{definition}



Let $\{U_{\alpha}\}$ be a trivializing open cover for a holomorphic principal $G$-bundle $E_{\rG}$ defined by local sections $s_{\alpha}: U_{\alpha}\to E_{\rG}$ with transition functions $g_{\alpha\beta}: U_{\alpha\beta}\to\rG$ where $U_{\alpha\beta}:=U_{\alpha}\cap U_{\beta}.$  
Given a holomorphic connection $\omega$ on $E_\rG$, define the local connection forms $\omega_{\alpha}:=s_{\alpha}^{*}\omega\in \Omega^{1}(U_{\alpha}, \fg).$  These local connection forms satisfy the relation
\begin{equation}
    \label{eq: transitions of local connections}\omega_{\alpha}=\Ad(g_{\alpha\beta}^{-1})\circ \omega_{\beta} +g_{\alpha\beta}^{*}\theta_{\rG},
\end{equation}
where $\theta_{\rG}$ is the left-invariant Mauer-Cartan form on $\rG.$  Moreover, given a set of trivializing local sections $\{s_{\alpha}\},$ any collection of locally defined holomorphic $1$-forms satisfying \eqref{eq: transitions of local connections} defines a holomorphic connection on $E_{\rG}.$

Finally, let $E_{\rG},F_\rG$ be  holomorphic principal $\rG$-bundles and $\eta$ be a holomorphic connection on $F_\rG.$ 
Let $s:U\to E_{\rG}$ and $t: U\to F_{\rG}$ be local sections over an open set $U\subset X.$  Consider a morphism $\Phi: E_{\rG}\to F_{\rG}$ and the local connection forms $\eta_{U}:=t^{*}\eta$ and $\eta_{U}^{\Phi}:=s^{*}(\Phi^{*}\eta).$
Then, with respect to the trivializations induced by $s$ and $t$, $\Phi$ is determined by a holomorphic map $\phi: U\to \rG$ and 
\begin{equation}
    \label{eq: local expresion for pullback of conn}\eta_{U}^{\Phi}=\Ad(\phi^{-1})\circ \eta_{U}+ \phi^{*}\theta_{\rG}.
\end{equation}

\subsection{Extensions and reductions of structure group}
Given a holomorphic principal $\rG_1$-bundle $E_{\rG_1}$ and a homomorphism $b:\rG_1\to\rG_2$ between complex Lie groups, there is an associated holomorphic principal $\rG_2$-bundle 
\[E_{\rG_1}[\rG_2]=(E_{\rG_1}\times\rG_2)/\sim,\]
where $(p,g_2)\sim(p\cdot g_1,b(g_{1}^{-1})\cdot g_2)$ for $g_1\in\rG_1$ and the action of $\rG_2$ on $E_{\rG_1}[\rG_2]$ is given by right multiplication on the second factor. 
Transition functions of $E_{\rG_1}[\rG_2]$ are defined by post-composing transition functions of $E_{\rG_1}$ with the homomorphism $b:\rG_1\to\rG_2.$
This process is usually called extending the structure group. 

Let $\omega$ be a holomorphic connection on $E_{\rG_1}$ and $\theta_{\rG_2}$ be the left invariant Maurer-Cartan form of $\rG_2$. Then the $\rG_2$-equivariant map 
\[\omega_b:\xymatrix@=0em{T(E_{\rG_1})\times T\rG_2\ar[r]&\fg_2\\(v,w)\ar@{|->}[r]&db(\omega(v))+\theta_{\rG_2}(w)}\] 
descends to define a connection on the extended bundle $E_{\rG_1}[\rG_2].$ We will denote the connection on $E_{\rG_1}[\rG_2]$ induced by $\omega$ by the same symbol $\omega.$

If $E_{\rG_1}$, $E_{\rG_2}$ are holomorphic $\rG_1$, $\rG_2$ bundles respectively, then their fiber product defines a holomorphic principal $(\rG_1\times\rG_2)$-bundle $E_{\rG_1\times\rG_2}.$
 Moreover, holomorphic connections $\omega,\eta$ on $E_{\rG_1},E_{\rG_2}$ respectively, induce a holomorphic connection $\omega\oplus\eta$ on $E_{\rG_1\times\rG_2}.$ 
When $\rG_1,\rG_2<\rG$ are two subgroups which commute with each other, then the multiplication map $m:\rG_1\times\rG_2\to\rG$ is a group homomorphism. In this case, two holomorphic bundles with holomorphic connection $(E_{\rG_1},\omega),(E_{\rG_2},\eta)$ induce a holomorphic $\rG$-bundle with connection which we denote by
\begin{equation}
    \label{eq: star notation}
    (E_{\rG_1}\star E_{\rG_2}[\rG],\omega\star\eta):=(E_{\rG_1\times\rG_2}[\rG],\omega\oplus\eta)~.
\end{equation}
Note that the transition functions of $E_{\rG_1}\star E_{\rG_2}[\rG]$
 are just the product of the transition functions of $E_{\rG_1}$ and $E_{\rG_2}$ in $\rG$ and that locally $\omega\star\eta$ is just a sum of the connection forms. 

Let $V$ be a complex vector space with a holomorphic $\rG$-action. A holomorphic $V$-valued $1$-form $\beta$ on $E_{G}$ is called \emph{horizontal} if $\beta(Y^\sharp)=0$ for every $Y\in \fg$, and \emph{equivariant} if $R_{g}^{*}\beta=g^{-1}\cdot \beta.$ 
There is a canonical isomorphism between equivariant, horizontal $V$-valued holomorphic $1$-forms and holomorphic sections the vector bundle $\Kk\otimes E_{\rG}[V]$ over $X.$

If $\rH<\rG$ is a complex Lie subgroup, then a holomorphic reduction of structure group of a holomorphic principal $\rG$-bundle $E_\rG$ to $\rH$ is a holomorphic sub-bundle $E_\rH\subset E_\rG:$ note that the inclusion homomorphism $H<G$ induces an isomorphism $E_{\rH}[\rG]\cong E_{\rG}.$
Given a holomorphic connection $\omega$ on $E_\rG$, the second fundamental form of $\omega$ relative to a reduction of structure $E_\rH\subset E_\rG$ measures the failure of $\omega$ to be induced by a connection on $E_\rH.$

\begin{definition}\label{def: second fundamental form of reduction}
Suppose $E_{\rH}\subset E_{\rG}$ is a holomorphic reduction of structure to a closed, complex subgroup $\rH<\rG.$  The second fundamental form $\Psi \in H^{0}(X, \Kk\otimes E_{\rH}[\fg/\fh])$ is the holomorphic section determined by the $\rH$-equivariant, horizontal $1$-form
\[\xymatrix{TE_{\rH}\ar[r]& TE_{\rG}\ar[r]^{\omega} &\fg\ar[r]& \fg/\fh}.\]
\end{definition}

Finally, we define a notion of relative position for a holomorphic connection relative to a reduction of structure.

\begin{definition}\label{def: relative pos of reduction}
Let $(E_{\rG},\omega)$ be a holomorphic flat $\rG$-bundle and $E_{\rH}\subset E_{\rG}$ be a holomorphic reduction to a complex subgroup $\rH<\rG$ with second fundamental form $\Psi.$  Let $\Oo\subset \fg/\fh$ be a $\C^*\times \rH$-invariant sub-manifold.
Then the position of $\omega$ relative to $E_{\rH}$ is equal to $\Oo$, denoted $\pos_{E_{\rH}}(\omega)=\Oo$, if and only if 
$\Psi(v)\in E_{\rH}(\Oo)$ for all non-zero vectors $v\in TX.$  
\end{definition}

Note, the $\C^{*}$-invariance implies that there is a well defined holomorphic fiber bundle $\Kk\otimes E_{H}[\Oo],$ and the second fundamental form $\Psi$ defines a holomorphic section of this fiber bundle.

\subsection{$\lambda$-connections and Higgs bundles}
In this subsection we generalize the notion of a holomorphic connection to depend on a parameter $\lambda\in\C.$ These objects were introduced by Deligne and have been studied by Simpson \cite{SimpsonHodgeFiltration}, and many others. Let $\rG$ be a complex semisimple Lie group and $E_\rG$ be a holomorphic principal $\rG$-bundle on a Riemann surface $X.$

\begin{definition}
Given $\lambda\in \C$, a holomorphic $\lambda$-connection on $E_{\rG}$ is a holomorphic $1$-form $\omega: TE_{\rG}\to \fg$ satisfying
\begin{enumerate}
\item $R_{g}^{*}\omega=\Ad(g^{-1})\circ \omega$ for all $g\in G.$
\item  $\omega(Y^{\sharp})=\lambda Y$ for all $Y\in \fg.$
\end{enumerate}
\end{definition}
\begin{remark}
 A holomorphic $1$-connection is just an ordinary holomorphic connection, and if $\omega$ is a $\lambda$-connection for $\lambda\neq 0,$ then
$\frac{1}{\lambda}\omega$ is an ordinary connection.  In contrast, a $0$-connection $\omega$ vanishes on vertical vectors, thus $\omega$ is equivalent to a holomorphic section $\widehat{\omega}\in H^{0}(X, \Kk\otimes E_{\rG}[\fg]),$ such a section is usually called a \emph{Higgs field}.
 \end{remark} 

\begin{remark}\label{rem:local trasformations of lambda conn}
    Recall that local connection forms transform as in \eqref{eq: transitions of local connections} and \eqref{eq: local expresion for pullback of conn}. Local $\lambda$-connections transform the same way with the Maurer-Cartan form $\theta_\rG$ in \eqref{eq: transitions of local connections} and \eqref{eq: local expresion for pullback of conn} replaced by $\lambda\theta_\rG.$
\end{remark}

\begin{definition}
    The category of holomorphic $(\rG,\lambda)$-connections $\Qq\Ff_{X}(\rG)$ on a Riemann surface $X$ has objects $(\lambda,E_\rG,\omega)$, where 
    \begin{itemize}
        \item $\lambda\in\C$,
        \item $E_\rG\to X$ is a holomorphic principal $\rG$-bundle,
        \item $\omega$ is a holomorphic $\lambda$-connection on $E_\rG$.
    \end{itemize}
    A morphism between two objects $(\lambda_1,E_\rG,\omega)$ and $(\lambda_2,F_\rG,\eta)$ is an isomorphism $\Phi:E_\rG\to F_\rG$ covering the identity on $X$ such that $\Phi^*\eta=\omega.$ In particular, there are no such morphisms when $\lambda_1\neq\lambda_2.$
\end{definition}

Note that there is a projection $\pi:\Qq\Ff_{X}(\rG)\to\C$ defined by $\pi(\lambda,E_\rG,\omega)=\lambda.$ For a fixed $\lambda\in \C,$ the fiber $\pi^{-1}(\lambda)=\Ff^\lambda(\rG)$ defines a full subcategory $\Ff_{X}^{\lambda}(\rG)\subset \Qq\Ff_{X}(\rG).$  There is also a $\C^*$-action on $\Qq\Ff_{X}(\rG)$:
\begin{equation}
    \label{eq: C*action on lambda conn}
    \xi\cdot(\lambda,E_\rG,\omega)=(\xi\lambda,E_\rG,\xi\omega),
\end{equation}
and $\pi$ is equivariant with respect to the standard action of $\C^*$ on $\C.$ 

When $\lambda=0,$ the category of $0$-connections $\Ff_{X}^0(\rG)$ is the well-known \emph{category of $\rG$-Higgs bundles} on $X.$ In this case, a holomorphic zero connection $(0,E_\rG,\omega)$ is equivalent to the data $(E_\rG,\hat\omega)$ where $\hat\omega\in \rH^0(X, \Kk\otimes \Ee_\rG[\fg]).$ 

Given a homogeneous basis $\{p_1,\cdots,p_{\rk(\fg)}\}$ of the $\rG$-invariant polynomials on $\fg$ with $\deg(p_j)=m_j+1$, we can define the \emph{Hitchin fibration}:
\begin{equation}
    \label{eq: Hitchin fibration} H:\xymatrix@R=0em{\Ff_{X}^0(\rG)\ar[r]&\bigoplus\limits_{j=1}^{\rk(\fg)}\rH^0(X,\Kk^{m_j+1})\\(E_\rG,\hat\omega)\ar@{|->}[r]&(p_1(\hat\omega),\cdots,p_{\rk(\fg)}(\hat\omega))}.
\end{equation}
The vector space $\bigoplus_{j=1}^{\rk(\fg)}\rH^0(X, \Kk^{m_j+1})$ is called the \emph{Hitchin base}. In \cite{liegroupsteichmuller}, Hitchin used the special features of principal $\fsl_2$-triples to construct a section of this map.  Since one of the main innovations in this paper is to put the work of Hitchin and Beilinson-Drinfeld in a more general context, we sketch the construction of the \emph{Hitchin section} for adjoint groups $\rG_\Ad$ here.

Let $\langle f,h,e\rangle\subset\fg$ be the principal $\fsl_2$-triple from Example \ref{ex: G/B principal sl2 data}. Recall that the Lie algebra decomposes into $\ad_h$-eigenspaces $\fg=\bigoplus_{j\in\Z}\fg_{2j}.$ Consider the holomorphic Lie algebra bundle 
\[\Ee_\fg:=\bigoplus\limits_{j\in\Z}\Kk^{j}\otimes\fg_{2j}.\]
The holomorphic vector bundle $\Ee_\fg$ is canonically associated to a principal $\rG_{\textnormal{Ad}}$-bundle, so that $E_{\rG_\Ad}[\fg]$ is canonically isomorphic to $\Ee_\fg.$ 


Recall that $V=\ker(\ad_e)=\bigoplus_{j=1}^{\rk(\fg)}V_{2m_j}$ is a sum of highest weight spaces where $V_{2m_j}=V\cap\fg_{2mj}.$ 
Choose a non-zero vector $e_{2m_j}$ in each highest weight space $V_{2m_j}$. 
Note that $f\in \fg_{-2}$ defines a holomorphic section of $\Kk\otimes(\Kk^{-1}\otimes\fg_{-2})$ and $q_{m_j+1}\otimes e_{2m_j}$ defines a holomorphic section of $\Kk\otimes (\Kk^{m_j}\otimes V_{2m_j})\subset\Kk\otimes (\Kk^{m_j}\otimes \fg_{2m_j})$ for each $q_{m_j+1}\in \rH^0(X,\Kk^{m_j+1})$. 
Consider the map $s_H:\bigoplus_{j=1}^{\rk(\fg)}\rH^0(X,\Kk^{m_j+1})\to\Ff_X^0(\rG_\Ad)$ defined
by
\begin{equation}
    \label{eq: Hitchin section}s_H(q_{m_1+1},\cdots,q_{m_{\rk(\fg)}+1})=\left(E_{\rG_\Ad},f+\sum_{j=1}^{\rk(\fg)}q_{m_j+1}\otimes e_{2m_j}\right).
\end{equation}
In \cite{KostantPolyBasis}, Kostant proved that there is a homogeneous basis $(p_{1},\cdots,p_{\rk(\fg)})$ of the $\rG$-invariant polynomials on $\fg$ with $\deg(p_j)=m_j+1$ such that 
\[p_{j}\left(f+\sum_{j=1}^{\rk(\fg)}a_{2m_j}e_{2m_j}\right)=a_{2m_j}.\]
Defining the Hitchin fibration with respect to this choice of basis for the invariant polynomials makes $s_H$ a section of the map $H$ in equation \eqref{eq: Hitchin fibration}.

\section{The category of (G,P)-opers}
In this section we define the main new object of the paper: $(\rG,\rP)$-opers. Let $X$ be a connected Riemann surface, $\rG$ be a connected complex semisimple Lie group, and $\rP<\rG$ be a parabolic subgroup.
Let $\Oo\subset \fg^{-1}/\fp$ be the unique open $\rP$-orbit on $\fg^{-1}/\fp$ from \S \ref{sec: parabolics}.  Note that $\Oo$ is also $\C^*$-invariant.
\begin{definition}\label{Def oper def}
A $(\rG, \rP)$-oper on $X$ is a triple $(E_{\rG}, E_{\rP}, \omega)$ such that
\begin{enumerate}
\item  $(E_{\rG}, \omega)$ is a holomorphic flat $G$-bundle over $X,$
\item $E_{\rP}\subset E_{\rG}$ is a holomorphic reduction of structure to the parabolic subgroup $\rP<\rG,$
\item the position of $\omega$ relative to $E_{\rP}$ satisfies $\pos_{E_{\rP}}(\omega)=\Oo.$
\end{enumerate}
\end{definition}
\begin{remark}
Note that there is an isomorphism $E_{\rP}[\fg^{-1}/\fp]\cong E_{\rL}[\fg_{-1}]$ where $\rL\cong \rP/\rU$ is the Levi factor of $\rP.$  
  When $\rP=\rB$, using Example \ref{ex: G/B principal sl2 data}, the third condition of Definition \ref{Def oper def} is that the projection of $\omega$ to every negative simple root space is nowhere vanishing.  This recovers Beilinson-Drinfeld's definition of a $\rG$-oper in \cite{BeilinsonDrinfeldOPERS}. 
\end{remark}
A morphism between two $(\rG, \rP)$-opers $(E_{\rG}, E_{\rP}, \omega), (F_{\rG}, F_{\rP}, \eta)$ is an isomorphism $\Phi:E_\rG\to F_\rG$ of holomorphic principal bundles such that $\Phi|_{E_\rP}:E_\rP\to F_\rP$ is an isomorphism and $\Phi^*\eta=\omega$. 
The category of $(\rG,\rP)$-opers over $X$ is denoted $\Op_{X}(\rG,\rP).$  
\begin{remark}
    The definition of a $(\rG,\rP)$-oper extends immediately to the context of $\lambda$-connections. We denote the category of $(\rG,\rP, \lambda)$-opers over $X$ as $\lambda$ ranges over $\C$ by $\Qq\Op_{X}(\rG,\rP).$ Namely, the objects of $\Qq\Op_{X}(\rG,\rP)$ consists of tuples $(\lambda, E_{\rG}, E_{\rP}, \omega)$, where $\lambda\in\C$ and $(E_\rG,E_\rP,\omega)$ is a $\lambda$-oper.
\end{remark}

Using Example \ref{ex: SL4 Lie data}, we can rephrase the above definition for $\rG=\rSL_4\C$ in terms of rank $4$-vector bundles.
\begin{example}
\label{ex: oper definition for even JM in SL4}
Given a holomorphic $\rSL_4\C$-bundle $\Ee$ with connection $\omega$, let $(\Vv,\nabla, \Omega)$ be the associated holomorphic vector bundle with connection $\nabla$ and parallel volume $\Omega$. 
\begin{enumerate}
    \item Since $\rB$ is the stabilizer of full flag in $\C^4,$ a holomorphic $\rB$-reduction $\Ee_\rB\subset\Ee$ defines a holomorphic filtration 
    \[0=\Vv_0\subset\Vv_1\subset\Vv_2\subset\Vv_3\subset\Vv_4=\Vv,\]
    where $\rk(\Vv_i)=i.$
    The third condition of Definition \ref{Def oper def} is that, for each $i,$ $\nabla(\Vv_i)\subset\Vv_{i+1}\otimes\Kk$ and $\nabla$ induces a bundle isomorphism
    \[\Vv_i/\Vv_{i-1}\xrightarrow{\cong}\Vv_{i+1}/\Vv_i\otimes \Kk.\]
    \item For $\rP$ the stabilizer of a subspace $\C^2\subset\C^4$, a holomorphic $\rP$-reduction $\Ee_\rP\subset\Ee$ defines a holomorphic filtration
    \[0=\Vv_0\subset\Vv_2\subset\Vv_4=\Vv,\]
    where $\rk(\Vv_i)=i.$ The third condition of Definition \ref{Def oper def} is that $\nabla$ induces a bundle isomorphism 
    \[\Vv_2\xrightarrow{\cong}\Vv/\Vv_2\otimes \Kk.\]
    \item For $\rP$ the stabilizer of a partial flag $\C^1\subset\C^3\subset\C^4$, a holomorphic $\rP$-reduction $\Ee_\rP\subset\Ee$ defines a holomorphic filtration
    \[0=\Vv_0\subset\Vv_1\subset\Vv_3\subset\Vv_4=\Vv,\]
    where $\rk(\Vv_i)=i$. The third condition of Definition \ref{Def oper def} is that $\nabla(\Vv_1)\subset \Vv_3 \otimes \Kk$ and the composition
    \[\Vv_1\xrightarrow{\nabla} \Vv_{3}\otimes \Kk \xrightarrow{\nabla\otimes\textnormal{id}} \Vv\otimes \Kk^2 \to \Vv/\Vv_{3} \otimes \Kk^{2}\]
     is a bundle isomorphism.
    \item For $\rP$ the stabilizer of a subspace $\C^1\subset\C^4$, a holomorphic $\rP$-reduction $\Ee_\rP\subset\Ee$ defines a holomorphic line sub-bundle $\Vv_1\subset\Vv.$ The third condition of Definition \ref{Def oper def} is that the map $\Vv_1\to\Vv/\Vv_1\otimes \Kk$ induced by $\nabla$ is an injective bundle map.
\end{enumerate}
\end{example}
\begin{remark}
    Each of the above examples generalizes naturally to higher rank holomorphic vector bundles. For the first example, we recover the classical vector bundle definition of an oper. Namely, an $(\rSL_n\C,\rB)$-oper consists of a rank $n$ holomorphic vector bundle $\Vv$ with holomorphic volume form $\Omega,$ equipped with a holomorphic filtration $\Vv_1\subset\cdots\subset\Vv_{n-1}\subset\Vv$ with $\rk(V_i)=i,$ and a holomorphic connection $\nabla$ preserving $\Omega$ and satisfying
   $\nabla(\Vv_i)\subset\Vv_{i+1}\otimes \Kk$ and $\nabla:\Vv_{i}/\Vv_{i-i}\to\Vv_{i+1}/\Vv_i\otimes\Kk$ an isomorphism for all $i$.
\end{remark}

If $\rG=\rS$ is isomorphic to $\rP\rSL_2\C$ or $\rSL_2\C,$ the Borel $\rB_\rS<\rS$ is the only proper parabolic subgroup. In terms of vector bundles, an $\rSL_2\C$-oper is a $4$-tuple $(\Vv,\Vv_1,\nabla, \Omega)$, where $\rk(\Vv)=2$ and $\Omega$ is a holomorphic volume form on $\Vv.$  Furthermore, $\Vv_{1}\subset \Vv$ is a holomorphic line sub-bundle and $\nabla$ is a holomorphic connection preserving $\Omega$ such that the induced map $\nabla:\Vv_1\to\Vv/\Vv_1\otimes \Kk$ is an isomorphism. 
When $X$ is a compact Riemann surface of genus $g\geq 2,$ this implies $\Vv_1\cong\Kk^\frac{1}{2}$ is one of the $2^{2g}$-square roots of $\Kk$ and $\Vv/\Vv_1\cong \Kk^{-\frac{1}{2}}.$ 
The ambient holomorphic vector bundle $\Vv$ is isomorphic to the unique non-split extension
\[\xymatrix{0\ar[r]&\Kk^\frac{1}{2}\ar[r]&\Vv\ar[r]&\Kk^{-\frac{1}{2}}\ar[r]& 0}.\]

Choosing a smooth identification of $\Vv$ with $\Kk^\frac{1}{2}\oplus \Kk^{-\frac{1}{2}},$ an explicit example of an oper is given by the following Dolbeault operator and holomorphic connection: 
\begin{equation}
    \label{eq psl2 oper in smooth splitting}\bar\partial_\Vv=\begin{pmatrix}\bar\partial_{1/2}&h\\0&\bar\partial_{-1/2}
\end{pmatrix}\ \ \ \ \ \ \ \ \text{and} \ \ \ \ \ \ \ \ \nabla=\begin{pmatrix}
    \nabla^h_{1/2}&q\\1&\nabla^h_{-1/2}
\end{pmatrix}.
\end{equation}
Here, $h \in \mathcal{A}^{0,1}(X, \Kk)$ is the uniformizing hyperbolic metric on $X$, $\bar\partial_{1/2}$ and $\bar\partial_{-1/2}$ are Dolbeault operators on $K^\frac{1}{2}$ and $K^{-\frac{1}{2}}$ respectively, $\nabla^h_{1/2}$ and $\nabla^h_{-1/2}$ are the $(1,0)$-parts of the Chern connections of the induced metrics on $K^\frac{1}{2}$ and $K^{-\frac{1}{2}}$, respectively, and $q\in \rH^0(X, \Kk^2).$ In fact, every $\rSL_2\C$-oper is isomorphic to \eqref{eq psl2 oper in smooth splitting}. This will be generalized in Theorem \ref{thm: parameterization picking PSL2 oper} for $(\rG,\rP)$-opers when $\rP$ is an even JM-parabolic. 

We can also use \eqref{eq psl2 oper in smooth splitting} to construct sections of $\Qq\Op_{X}(\rSL_2\C)\to \C.$ One such section is given by 
\begin{equation} 
    \label{eq sections of lambda}\tau(\lambda)=(\lambda,\bar\partial_\Ff,\nabla)=\left(\lambda,\begin{pmatrix}\bar\partial_{1/2}&\lambda h\\0&\bar\partial_{-1/2}
\end{pmatrix}, \begin{pmatrix}
    \lambda\nabla^h_{1/2}&\lambda^2 q\\1&\lambda \nabla^h_{-1/2}
\end{pmatrix}\right)~,
\end{equation}

\begin{remark}
    In the above discussion, we have a choice of a square root of $\Kk.$ Each such choice defines the same $\rPSL_2\C$-oper.
\end{remark}

\section{The global Slodowy slice for even JM-parabolics}
In this section, we define the $\fs$-Slodowy category $\Bb_{\fs, X}(\rG)$ associated to any even $\fsl_2$-triple $\fs=\langle f,h,e\rangle\subset\fg$.  Throughout the rest of the paper, we fix once and for all a connected, compact Riemann surface $X$ of genus $g\geq2$.  All holomorphic objects are taken over $X.$  

We first prove that the category $\Bb_{\fs, X}(\rG)$ is equivalent to the category $\Op_{X}(\rG,\rP)$ of $(\rG,\rP)$-opers, where $\rP$ is the even JM-parabolic associated to $\fs$. This equivalence can be interpreted as a ``global slice theorem''. When $\fs$ is a principal subalgebra and $\rG$ is an adjoint group, $\Bb_{\fs, X}(\rG)$  is the Hitchin base and we recover Beilinson-Drinfeld's parameterization of $(\rG,\rB)$-opers.

Let $\rS<\rG$ be the connected subgroup with Lie algebra $\fs.$ The functor realizing the equivalence depends on the choice of an $\rS$-oper. We then show that there is a full subcategory $\widehat\Bb_{\fs, X}(\rG)$ of $\Bb_{\fs, X}(\rG)$ and a functor $\Op_{X}(\rS,\rB_\rS)\times\widehat\Bb_{\fs, X}(\rG)\to \Op_{X}(\rG,\rP)$ which is an equivalence when $\rS\cong\rPSL_2\C,$ and essentially surjective and full when $\rS\cong\rSL_2\C.$ Finally we extend these results to the category of $(\lambda,\rG,\rP)$-opers. For $\lambda=0$ and $\rP=\rB$, we recover the Hitchin section.

\subsection{The $\fs$-Slodowy category}
Let $\rG$ be a connected, complex semi-simple Lie group. Fix an even $\fsl_2$-triple $\fs=\langle f,h,e\rangle\subset\fg$, let $\fb_\fs=\langle h,e\rangle\subset\fg$ and let $\rB_\rS<\rS<\rG$ be the associated connected subgroups of $\rG$. Recall from \eqref{eq even sl2 decomp data} that we have an $\fs$-module decomposition of $\fg$ 
\[\fg=\fc\oplus\bigoplus_{j=1}^NW_{2m_j}.\]
Recall also that the Slodowy data of $\fs$ given by 
\[\xymatrix{V=\fc\oplus\bigoplus\limits_{j=1}^NV_{2m_j}}.\]
Moreover, from Proposition \ref{prop: s+b_0 decomp of V and V_2}, we have $V_2=\langle e\rangle\oplus \hat V_2.$

The Slodowy data of $\fs$ is the Lie theoretic mechanism which defines the generalization of the Hitchin base. 
\begin{definition}\label{def s Slodowy category}
Let $\rG$ be a connected complex semi-simple Lie group and $\fs=\langle f,h,e\rangle\subset\fg$ be an even $\fsl_2$-triple with centralizer $\rC<\rG$. The \emph{$\fs$-Slodowy} category $\Bb_{\fs, X}(\rG)$ has objects $(E_\rC,\psi_0,\psi_{m_1},\cdots,\psi_{m_N}),$ where 
\begin{itemize}
    \item $E_\rC$ is a holomorphic $\rC$-bundle.
    \item $\psi_0$ is a holomorphic $\rC$-connection on $E_\rC$.
    \item $\psi_{m_j}\in  \rH^0(X,E_\rC[V_{2m_j}]\otimes \Kk^{m_j+1}).$
\end{itemize}
The morphisms are given by isomorphisms of holomorphic flat $\rC$-bundles which identify holomorphic sections of $E_{\rC}[V_{2m_j}]\otimes \Kk^{m_j+1}$ for every $j.$
\end{definition}

\begin{remark}
    For a principal $\fsl_2$-subalgebra $\fs\subset\fg$, $\rC<\rG$ is the center of $\rG$ and $\psi_{m_j}\in \rH^0(X, \Kk^{m_j+1})$. In this case, $\Bb_{\fs, X}(\rG)$ is the product of the Hitchin base with the finite set of holomorphic $\rC$-bundles on $X$. In particular, for an adjoint group $\rG$, $\Bb_{\fs, X}(\rG)$ is the Hitchin base. 
\end{remark}


The following lemma allows us to view the \emph{coefficients} $\psi_{m_{j}}$ in the Slodowy category as holomorphic $1$-forms valued in some auxiliary bundle, which are the kind of objects that can be added to connections.

\begin{lemma}
Let $\Theta=(F_{\rS},F_{\rB_\rS},\eta)$ be an $(\rS,\rB_\rS)$-oper.
For every holomorphic $\rC$-bundle $E_\rC,$ the second fundamental form of $\eta$ induces an isomorphism
\begin{equation}
    \label{eq iso levi to slodowy}E_{\rC}\star F_{\rB_S}[V_{2m_j}]\otimes \Kk\cong E_\rC[V_{2m_j}]\otimes \Kk^{m_j+1},
\end{equation}
where we recall the $\star$-notation from \eqref{eq: star notation}.
\end{lemma}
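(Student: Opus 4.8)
The plan is to identify both sides as associated bundles of the principal $(\rC\times\rB_\rS)$-bundle $E_\rC\star F_{\rB_\rS}$ and to trace through what the oper condition on $\eta$ says about the $\rS$-representation $V_{2m_j}$. First I would recall that $V_{2m_j}\subset\fg$ is, by the Slodowy data decomposition \eqref{eq highest weight space decomp}, precisely the highest weight space of the irreducible $\fsl_2\C$-module $W_{2m_j}$ of dimension $2m_j+1$; since $\fs$ acts on $V_{2m_j}$ through the $\fc\oplus\fb_\fs$-action and $V_{2m_j}$ lies in the $\ad_h$-eigenspace $\fg_{2m_j}$, it is an $\fc\oplus\langle h\rangle$-weight space of weight $2m_j$, but it is \emph{not} $\fb_\fs$-invariant — rather, it generates $W_{2m_j}$ under repeated application of $\ad_f$. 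The key point is that $V_{2m_j}$ with the residual $\rB_\rS$-action is the top weight line-bundle analogue: as a $\rB_\rS$-representation twisted appropriately, $V_{2m_j}$ transforms by the character of $h$-weight $2m_j$, which geometrically is $\Kk^{m_j}$.

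\textbf{Key steps.} The main work is the following chain of identifications. (1) The left-hand side $E_\rC\star F_{\rB_\rS}[V_{2m_j}]\otimes\Kk$: here $V_{2m_j}$ is regarded as a representation of $\rC\times\rB_\rS$ (the $\rC$-factor acting as the centralizer, the $\rB_\rS$-factor acting as the Borel of $\rS$ on this subspace of $\fg$). (2) Using the $(\rS,\rB_\rS)$-oper $\Theta=(F_\rS, F_{\rB_\rS},\eta)$: the second fundamental form of $\eta$ relative to $F_{\rB_\rS}$ is, by Definition \ref{def: second fundamental form of reduction} and the oper condition $\pos_{F_{\rB_\rS}}(\eta)=\Oo$, a nowhere-vanishing section of $\Kk\otimes F_{\rB_\rS}[\fs_{-1}/\fb_\fs]$. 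Since $\fs_{-1}/\fb_\fs$ is one-dimensional with $\rB_\rS$ acting by the appropriate character, this nowhere-vanishing section trivializes $F_{\rB_\rS}[\fs_{-1}/\fb_\fs]$ against $\Kk^{-1}$, i.e. it produces an isomorphism $F_{\rB_\rS}[\C_{\text{wt}=-2}]\cong\Kk^{-1}$ where $\C_{\text{wt}=-2}$ is the weight $-2$ character of $h$ (equivalently $F_{\rB_\rS}[\C_{\text{wt}=2}]\cong\Kk$). (3) Decompose the $\rB_\rS$-representation $V_{2m_j}$: although $V_{2m_j}$ is a highest-weight \emph{space} rather than a subrepresentation, as a vector space carrying the $\langle h\rangle$-action it is a sum of copies of the weight-$2m_j$ character, and the unipotent part $\langle e\rangle$ of $\fb_\fs$ acts trivially on it (it is the kernel of $\ad_e$ by construction). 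Hence as a $\rB_\rS$-module, $V_{2m_j}\cong (\text{trivial }\rB_\rS\text{-module})\otimes\C_{\text{wt}=2m_j}$ — more precisely, $\rB_\rS$ acts only through its torus, by the weight $2m_j=m_j\cdot 2$ character. (4) Conclude: $F_{\rB_\rS}[V_{2m_j}]\cong F_{\rB_\rS}[\C_{\text{wt}=2}]^{\otimes m_j}\otimes(\text{constant bundle with fiber }V_{2m_j})\cong\Kk^{m_j}\otimes(\text{constant }V_{2m_j})$. Tensoring the whole $(\rC\times\rB_\rS)$-picture: $E_\rC\star F_{\rB_\rS}[V_{2m_j}]\otimes\Kk\cong E_\rC[V_{2m_j}]\otimes\Kk^{m_j}\otimes\Kk=E_\rC[V_{2m_j}]\otimes\Kk^{m_j+1}$, which is exactly the right-hand side.

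\textbf{The main obstacle.} The subtle point is step (3): $V_{2m_j}$ is \emph{not} $\rB_\rS$-invariant inside $\fg$ (only $\langle h\rangle$ and $\langle e\rangle$-invariant, the latter trivially), so one must be careful to say precisely which $\rB_\rS$-action is meant on $V_{2m_j}$ in the notation $F_{\rB_\rS}[V_{2m_j}]$. The correct reading — and the one that makes the lemma true — is that $\rB_\rS$ acts on $V_{2m_j}$ through the restriction to $\fb_\fs=\langle h,e\rangle$ of the $\fs$-action on $\fg$, which preserves $V_{2m_j}$ precisely because $V_{2m_j}=\ker\ad_e\cap\fg_{2m_j}$ is simultaneously $\ad_e$-stable (it is in the kernel) and $\ad_h$-stable (it is a weight space). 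I would make this explicit first, then the weight being exactly $2m_j$ forces the power $\Kk^{m_j}$ via the oper trivialization from step (2). After that, the isomorphism is functorial in $E_\rC$ because everything is built from associated-bundle constructions applied to a fixed $\Theta$, and the compatibility with connections (implicit in "induces an isomorphism") follows since the trivialization in step (2) is by a flat section datum coming from $\eta$. I expect the proof in the paper is essentially this, phrased via the Levi subalgebra identification $\fg_0\cap W_{2m_j}\cong V_{2m_j}$ so that the $\ad_f$-lowering operator gives the concrete isomorphism.
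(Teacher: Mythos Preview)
Your proposal is correct and follows essentially the same argument as the paper: the oper condition gives $F_{\rB_\rS}[\fs/\fb_\fs]\cong\Kk^{-1}$ (equivalently $F_{\rB_\rS}[\langle e\rangle]\cong\Kk$), and since $\rB_\rS$ acts on $V_{2m_j}$ through the weight-$2m_j$ character (your careful justification that $V_{2m_j}=\ker\ad_e\cap\fg_{2m_j}$ is $\rB_\rS$-stable is exactly the point, stated more tersely in the paper as ``$\rB_\rS$ acts on $V_{2m_j}$ via a multiplicative character of exponent $2m_j$''), one gets $F_{\rB_\rS}[V_{2m_j}]\cong\Kk^{m_j}\otimes V_{2m_j}$. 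Your closing expectation that the paper might route through $\ad_f^{m_j}$ and the zero-weight space is not what happens --- the paper's argument is the direct one you already gave.
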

\begin{proof}
The oper condition implies that the second fundamental form of $\eta$ relative to $F_{\rB_{\rS}}$ induces an isomorphism
$\Kk^{-1}\cong F_{\rB_{\rS}}[\fs/{\fb_{\fs}}].$  This implies that $\Kk\cong F_{\rB_{\rS}}[\langle e \rangle].$  Since $\rB_{\rS}$ acts on $V_{2m_{j}}$ via a multiplicative character of exponent $2m_{j},$ this implies that $\Kk^{m_{j}}\otimes V_{2m_{j}}\cong F_{\rB_{\rS}}[V_{2m_{j}}].$  
\end{proof}

We now define the \emph{Slodowy} functor, which should be viewed as a generalized affine deformation of the Hitchin section.  
\begin{definition}
    \label{def Slodowy functor}
Let $\rG$ be a connected complex semisimple Lie group, $\fs=\langle f,h,e\rangle\subset\fg$ an even $\fsl_2$-triple, and $\rS<\rG$ the associated connected subgroup. For each $(\rS,\rB_\rS)$-oper $\Theta=(F_{\rS},F_{\rB_\rS},\eta)$, the \emph{$\Theta$-Slodowy functor} 
\begin{equation}
    \label{eq psl2 oper functor}F_{\Theta}: \Bb_{\fs, X}(\rG)\to\Op_{X}(\rG,\rP) 
\end{equation}
is defined by
\[F_\Theta(\Xi)=(E_{\rC}\star F_{\rS}[\rG],E_\rC\star F_{\rB_\rS}[\rP],\eta\star\psi_0+\psi_{m_1}+\cdots+\psi_{m_N}),\]
where $\Xi=(E_\rC,\psi_0,\psi_{m_1},\cdots,\psi_{m_N})$ is an object in $\Bb_{\fs, X}(\rG)$ and we have identified each $\psi_{m_j}$ with a holomorphic section of $F_{\rB_\rS}\star E_\rC[V_{2m_j}]\otimes \Kk$ using the isomorphism \eqref{eq iso levi to slodowy}. If $\Phi:E_\rC\to E_\rC'$ is a morphism in $\Bb_{\fs, X}(\rG)$, then $F_\Theta(\Phi)=\Phi\star\Id_{F_\rS}:E_\rC\star F_\rS(\rG)\to E_\rC'\star F_\rS(\rG).$  
\end{definition}

Note that the Slodowy functor \emph{does} produce a $(\rG,\rP)$-oper, since the resulting holomorphic connection leaves the $\fg_{-1}$-space unchanged, and therefore the second fundamental form of $\eta\star\psi_0+\psi_{m_1}+\cdots+\psi_{m_N}$ relative to $E_\rC\star F_{\rB_\rS}[\rP]$ is identified with the second fundamental form of $\eta.$  

\begin{remark}
    The Slodowy functor can be extended to a functor 
   \begin{equation}
       \label{eq slodowy functor product}F:\xymatrix@R=0em{\Op_{X}(\rS,\rB_\rS)\times \Bb_{\fs, X}(\rG)\ar[r]&\Op_{X}(\rG,\rP) \\(\Theta,\Xi)\ar@{|->}[r]&F_\Theta(\Xi)}.
   \end{equation}
    Given morphisms $\Phi_1:\Theta\to\Theta'$ and $\Phi_2:\Xi\to\Xi'$, we have $F(\Phi_1,\Phi_2)=\Phi_1\star \Phi_2:F(\Theta,\Xi)\to F(\Theta',\Xi').$
\end{remark}

Recall that a functor defines an equivalence of categories if it is essentially surjective and fully faithful, i.e. every isomorphism class is in the image and the functor induces a bijection on morphisms.  The central theorem of this paper is the following \emph{parameterization} of $(\rG,\rP)$-opers via the Slodowy category.

\begin{theorem}\label{thm: equivalence of categories}
Let $\rG$ be a connected complex semi-simple Lie group, $\fs=\langle f,h,e\rangle\subset \fg$ be an even $\fsl_2$-triple, and $\rP<\rG$ be the associated even JM-parabolic.
For every $(\rS,\rB_\rS)$-oper $\Theta$, the $\Theta$-Slodowy functor
\[F_{\Theta}: \Bb_{\fs, X}(\rG)\to\Op_{X}(\rG,\rP) \]
is an equivalence of categories.  
\end{theorem}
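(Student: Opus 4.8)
The plan is to prove the two halves of an equivalence — essential surjectivity and full faithfulness — by exploiting Theorem \ref{thm: Kostant generalization} (Lynch's generalization of Kostant's slice theorem) as the core Lie-theoretic input, transported to the geometric setting of bundles and connections over $X$.

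\textbf{Essential surjectivity.} Let $(E_\rG, E_\rP, \omega)$ be an arbitrary $(\rG,\rP)$-oper. I would first observe that the chosen $(\rS,\rB_\rS)$-oper $\Theta = (F_\rS, F_{\rB_\rS}, \eta)$ produces, via the inclusion $\rS < \rG$, a reference $(\rG,\rP)$-oper $F_\Theta(\Xi_0)$ for the ``trivial'' object $\Xi_0$ (trivial $\rC$-bundle, trivial connection, all $\psi_{m_j}=0$), at least locally. The strategy is to choose local trivializations of $E_\rP$ compatible with the filtration, in which the oper connection $\omega$ has local connection form $\omega_\alpha = f + (\text{higher } \ad_h\text{-weight terms in } \fp)$; the oper condition (position $\Oo_{-2}$, using Remark \ref{rem: comparing gradings for even JM-parabolics} that the parabolic grading is the even part of the $\ad_h$-grading) is exactly what lets us normalize the $\fg_{-2}$-component to $f$ by a $\rL$-valued (in fact $\rC$-coset) gauge transformation, using Theorem \ref{thm Kostant-Malcev open orbit of sl2 triple}. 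Then Theorem \ref{thm: Kostant generalization} is applied iteratively over the filtration $\rU^m < \cdots < \rU^1$: at each stage a unique $\rU^i$-valued gauge transformation brings $\omega_\alpha$ into the form $f + v_\alpha$ with $v_\alpha$ a section of the Slodowy slice bundle, i.e. $v_\alpha \in f + V$. Collecting the $V = \fc \oplus \bigoplus V_{2m_j}$ components across the cover, and checking that the residual gauge freedom is precisely $\rC$-valued, produces a holomorphic $\rC$-bundle $E_\rC$ with connection $\psi_0$ (the $\fc$-part) and holomorphic sections $\psi_{m_j}$ of $E_\rC[V_{2m_j}]\otimes\Kk^{m_j+1}$ (the $V_{2m_j}$-parts, where the $\Kk$-powers come from tracking $\ad_h$-weights as in the Lemma preceding Definition \ref{def Slodowy functor}). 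This $\Xi = (E_\rC, \psi_0, \psi_{m_1},\dots,\psi_{m_N})$ satisfies $F_\Theta(\Xi) \cong (E_\rG, E_\rP, \omega)$.

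\textbf{Full faithfulness.} For faithfulness and fullness I would argue that a morphism $\Phi: F_\Theta(\Xi) \to F_\Theta(\Xi')$ in $\Op_X(\rG,\rP)$ — a bundle isomorphism preserving the $\rP$-reductions and intertwining the connections — must, when written locally with respect to the normalized trivializations above, be $\rP$-valued and conjugate $f + v_\alpha$ to $f + v'_\alpha$ while fixing $f$ (the $\fg_{-2}$-part is rigid under the oper condition). By the \emph{uniqueness} clause in Theorem \ref{thm: Kostant generalization} — the map $\rU^i \times (f+V) \to f+V+\fg^{i-1}$ is an \emph{isomorphism}, hence injective — the unipotent part of $\Phi$ must be trivial, so $\Phi$ is $\rL$-valued; a further weight/orbit argument (again via Theorem \ref{thm Kostant-Malcev open orbit of sl2 triple}, the stabilizer of $f$ in $\rL$ being $\rC$) forces $\Phi$ to be $\rC$-valued of the form $\Phi = \Phi_\rC \star \Id_{F_\rS}$. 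This exhibits $\Phi$ as $F_\Theta(\Phi_\rC)$ for a unique morphism $\Phi_\rC: \Xi \to \Xi'$ in $\Bb_{\fs,X}(\rG)$, giving both injectivity and surjectivity of $F_\Theta$ on Hom-sets. I would also record that the construction is independent of the auxiliary choice of local trivializations, using that two such differ by the residual $\rC$-gauge freedom already accounted for.

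\textbf{Main obstacle.} The delicate point is the globalization: Lynch's theorem is a statement about affine varieties (local models), and one must verify that the pointwise/local normalization glues to honest holomorphic objects over $X$ and that the resulting $E_\rC$ is a genuine holomorphic $\rC$-bundle rather than just a cocycle up to the slice ambiguity. Concretely, the subtlety is that the gauge transformation bringing $\omega_\alpha$ to Slodowy-slice form involves dividing by the $\fg_{-2}$-entry, which is nowhere zero precisely because of the oper condition (position $\Oo_{-2}$), and one must confirm holomorphic dependence of the iterated $\rU^i$-valued corrections on the base — this follows because each step solves a \emph{polynomial} equation with invertible leading term (the isomorphism in Theorem \ref{thm: Kostant generalization}), so the solution is holomorphic. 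A secondary bookkeeping obstacle is matching the $\ad_h$-weight $2m_j$ of $V_{2m_j}$ with the power $\Kk^{m_j+1}$ and checking the transition cocycle for $E_\rC$ takes values in $\rC$ (not merely its normalizer) — this is exactly the content of Theorem \ref{thm Kostant-Malcev open orbit of sl2 triple} identifying the $\rL$-stabilizer of $e$ (equivalently $f$) with $\rC$, combined with the $\C^*$-equivariance built into the oper definition.
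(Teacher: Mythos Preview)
Your proposal is correct and follows essentially the same strategy as the paper. The paper packages the argument slightly differently: it first isolates the local normalization (your iterative use of Theorem~\ref{thm: Kostant generalization}) as a standalone lemma, then proves a separate gluing lemma showing that the resulting transition cocycle $q_{\alpha\beta}$ factors as $c_{\alpha\beta}\cdot b_{\alpha\beta}$ with $c_{\alpha\beta}$ valued in $\rC$ and $b_{\alpha\beta}$ the fixed $\rB_\rS$-transitions of $\Theta$; both essential surjectivity and fullness are then read off from a single parameterization theorem. One small technical refinement worth noting: rather than normalizing the local connection form to ``$f+v_\alpha$'' (which conflates a $1$-form with a Lie-algebra element), the paper subtracts the reference $\rS$-oper form $\eta_\alpha$ and shows $\Ad(\Psi_\alpha^{-1})\circ\omega_\alpha + \Psi_\alpha^*\theta_\rP - \eta_\alpha$ is $V$-valued; this cleanly absorbs the $\Kk$-twist on the $\fg_{-2}$-piece and makes the $\rB_\rS$-part of the cocycle manifest.
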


\begin{remark}
  When $\fs$ is a principal subalgebra and $\rP=\rB$ is the Borel subgroup, Theorem \ref{thm: equivalence of categories} is due to Beilinson-Drinfeld \cite{BeilinsonDrinfeldOPERS}. Note that, in this case, the Slodowy functor is defined similarly to the Hitchin section \eqref{eq: Hitchin section}, see \eqref{eq: Slodowy functor recovering Hitchin section} for the explicit relation.  
\end{remark}


To remove the choice of $(\rS,\rB_\rS)$-oper from Theorem \ref{thm: equivalence of categories}, we introduce a full sub-category $\widehat\Bb_{\fs, X}(\rG)$ of $\Bb_{\fs, X}(\rG)$. Since the space $V_2$ decomposes $\rC$-invariantly as $\langle e\rangle\oplus \hat V_2$, for any holomorphic $\rC$-bundle $E_\rC$ we have 
\[E_\rC[V_2]\otimes \Kk^2\cong \Kk^2\oplus (E_\rC[\hat V_2]\otimes \Kk^2).\]
Thus, for an object $(E_\rC,\psi_0,\psi_{m_1},\cdots,\psi_{m_N})$ in $\Bb_{\fs, X}(\rG)$, $\psi_{m_1}$ decomposes as 
\begin{equation}
    \label{eq psi1 decomp}\psi_{m_1}=q\oplus\hat\psi_{m_1},
\end{equation}
where $q\in \rH^0(X,\Kk^2)$ and $\hat\psi_{m_1}\in \rH^0(X,E_C[ \hat V_2]\otimes \Kk^2).$ 
\begin{definition}
    Let $\fs=\langle f,h,e\rangle\subset\fg$ be an even $\fsl_2$-triple. Define the \emph{traceless quadratic $\fs$-Slodowy category} to be the full subcategory $\widehat\Bb_{\fs, X}(\rG)$ of $\Bb_{\fs, X}(\rG)$ whose objects are $(E_\rC,\psi_0,\psi_{m_1},\cdots,\psi_{m_N})$ with $\psi_{m_1}=\hat\psi_{m_1}.$
\end{definition}

The following theorem mirrors Theorem \ref{thm: equivalence of categories}, but reinstates the symmetry broken by the choice of a fixed $(\rS, \rB_{\rS})$-oper.

\begin{theorem}\label{thm: no quadratic equivalence}
    Let $\rG$ be a connected complex semi-simple Lie group, $\fs=\langle f,h,e\rangle\subset\fg$ an even $\fsl_2$-triple, $\rS<\rG$ the connected subgroup with Lie algebra $\fs$, and $\rP<\rG$ the associated even JM-parabolic. Then, the functor 
    \[F:\xymatrix@R=0em{\Op_{X}(\rS,\rB_\rS)\times \widehat\Bb_{\fs, X}(\rG)\ar[r]&\Op_{X}(\rG,\rP) \\(\Theta,\Xi)\ar@{|->}[r]&F_\Theta(\Xi)}\]
   is an equivalence of categories when $\rS\cong\rPSL_2\C$, and essentially surjective and full when $\rS\cong\rSL_2\C$.
\end{theorem}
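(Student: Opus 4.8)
The plan is to deduce Theorem \ref{thm: no quadratic equivalence} from Theorem \ref{thm: equivalence of categories}, using the observation that the extra input $\Theta\in\Op_X(\rS,\rB_\rS)$ in the two-variable functor $F$ is, up to isomorphism, equivalent to the datum $q\in\rH^0(X,\Kk^2)$ that was removed when passing from $\Bb_{\fs,X}(\rG)$ to $\widehat\Bb_{\fs,X}(\rG)$. Concretely, I would first establish a structural description of $\Op_X(\rS,\rB_\rS)$: when $\rS\cong\rPSL_2\C$, every $(\rS,\rB_\rS)$-oper is isomorphic to the one built from the fixed uniformizing data as in \eqref{eq psl2 oper in smooth splitting} with a choice of $q\in\rH^0(X,\Kk^2)$, and two such are isomorphic if and only if the quadratic differentials agree; moreover $\mathrm{Aut}$ of such an oper is trivial (this uses that the underlying $\rS$-bundle is the unique nonsplit extension, so its only bundle automorphisms preserving the flag and the connection are scalars, which act trivially in $\rPSL_2\C$). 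When $\rS\cong\rSL_2\C$ the same holds except that $\mathrm{Aut}$ is $\{\pm\Id\}$, which is why essential surjectivity and fullness survive but faithfulness fails. I would package this as: the forgetful/``quadratic part'' functor $\Op_X(\rS,\rB_\rS)\to\{\text{objects }q\in\rH^0(X,\Kk^2)\text{ with only identity morphisms}\}$ is an equivalence (resp. essentially surjective and full) in the $\rPSL_2\C$ (resp. $\rSL_2\C$) case.

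Next I would exhibit an equivalence of categories $G:\Op_X(\rS,\rB_\rS)\times\widehat\Bb_{\fs,X}(\rG)\xrightarrow{\ \sim\ }\Bb_{\fs,X}(\rG)$ (or at least essentially surjective and full in the $\rSL_2\C$ case) realized on objects by sending $(\Theta,(E_\rC,\psi_0,\hat\psi_{m_1},\psi_{m_2},\dots))$ to $(E_\rC,\psi_0, q_\Theta\oplus\hat\psi_{m_1},\psi_{m_2},\dots)$, where $q_\Theta\in\rH^0(X,\Kk^2)$ is the quadratic differential attached to $\Theta$ via the previous paragraph and we use the canonical $\rC$-invariant splitting $V_2=\langle e\rangle\oplus\hat V_2$ so that $E_\rC[V_2]\otimes\Kk^2\cong\Kk^2\oplus(E_\rC[\hat V_2]\otimes\Kk^2)$ as in \eqref{eq psi1 decomp}. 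Essential surjectivity is immediate: given an arbitrary object of $\Bb_{\fs,X}(\rG)$, split $\psi_{m_1}=q\oplus\hat\psi_{m_1}$ and pick a $\Theta$ realizing $q$. For morphisms, note a morphism in the product category is a pair $(\Phi_1,\Phi_2)$; the relevant point is that $\Phi_1$ contributes nothing beyond an isomorphism of the quadratic differentials (in the $\rPSL_2\C$ case $\Phi_1$ is unique, so $G$ is a bijection on $\mathrm{Hom}$-sets; in the $\rSL_2\C$ case $\Phi_1$ may be multiplication by $-\Id$, which acts trivially on everything in sight, giving fullness but a two-to-one map on morphisms).

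Finally I would observe that the Slodowy functor factors, up to natural isomorphism, as
\[
F\;\cong\;F_{\Theta_0}\circ G,
\]
for any fixed base oper $\Theta_0$ — or, more cleanly, that $F$ applied to $(\Theta,\Xi)$ is naturally isomorphic to $F_{\Theta_0}$ applied to $G(\Theta,\Xi)$, because adding the coefficient $q_\Theta\otimes\Id$ in the $\langle e\rangle\subset V_2$ slot of the connection is exactly the effect of changing the base oper from $\Theta_0$ to $\Theta$ (both amount to replacing the $\fsl_2$ quadratic-differential term, and any two $(\rS,\rB_\rS)$-opers differ precisely by such a term by the structure result of the first paragraph). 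Granting this factorization, Theorem \ref{thm: equivalence of categories} (which says each $F_{\Theta_0}$ is an equivalence) composed with the equivalence $G$ yields that $F$ is an equivalence when $\rS\cong\rPSL_2\C$, and composed with the essentially-surjective-and-full functor $G$ yields the weaker conclusion when $\rS\cong\rSL_2\C$, since composites of essentially surjective (resp. full) functors are essentially surjective (resp. full). The last sentence of the theorem — that $F$ induces a bijection on isomorphism classes — then follows formally, as an essentially surjective full functor is always surjective on iso-classes, and one checks injectivity directly: $F_\Theta(\Xi)\cong F_{\Theta'}(\Xi')$ forces, by reading off the induced second fundamental form and the $\rC$-bundle-with-connection and the tuple of coefficients (which are recovered intrinsically from a $(\rG,\rP)$-oper by Theorem \ref{thm: equivalence of categories}), first $q_\Theta=q_{\Theta'}$ hence $\Theta\cong\Theta'$, and then $\Xi\cong\Xi'$ in $\widehat\Bb_{\fs,X}(\rG)$.

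I expect the main obstacle to be the careful bookkeeping in the first paragraph: pinning down $\mathrm{Aut}$ of an $(\rS,\rB_\rS)$-oper and proving that $\Op_X(\rS,\rB_\rS)$ is, as a category, equivalent to the discrete-morphism category on $\rH^0(X,\Kk^2)$ in the $\rPSL_2\C$ case (and only essentially surjective/full, with $\{\pm\Id\}$-automorphisms, in the $\rSL_2\C$ case). Everything downstream — the construction of $G$, the factorization $F\cong F_{\Theta_0}\circ G$, and the formal consequences for essential surjectivity, fullness, and bijectivity on iso-classes — is then bookkeeping that rides on Theorem \ref{thm: equivalence of categories}. A secondary subtlety is making sure the natural isomorphism $F\cong F_{\Theta_0}\circ G$ is genuinely natural (compatible with the $\star$-construction of Definition \ref{def Slodowy functor} and with morphisms $(\Phi_1,\Phi_2)$), which requires unwinding that $\Phi_1$ intertwines the two base opers by an isomorphism that, after $F_{\Theta_0}\circ G$, becomes the identity (or $\pm\Id$) — exactly the place where the $\rSL_2\C$ versus $\rPSL_2\C$ dichotomy enters.
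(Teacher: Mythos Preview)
For essential surjectivity your argument and the paper's coincide: both rest on the identity $F_\Theta(\Xi)=F_{\Theta_q}(\hat\Xi)$ when $\psi_{m_1}=q\oplus\hat\psi_{m_1}$, obtained by sliding the quadratic differential from the $V_2$-coefficient into the $\rS$-oper connection. For fullness and faithfulness the paper does \emph{not} factor through an auxiliary functor $G$; it argues directly that any morphism $\Phi$ between two $(\rG,\rP)$-opers in the image of $F$ has local representatives $\Phi_\alpha:U_\alpha\to\rP$ which preserve $\fs+V$ (since both connection forms land there), and the graded analysis of Lemma~\ref{lemma Ad_U(f) not in V} forces each $\Phi_\alpha$ into $\rC\cdot\rB_\rS$, whence $\Phi$ splits as $\Phi_\rC\cdot\Phi_{\rB_\rS}$. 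Faithfulness is then the one-line observation that the multiplication $\rS\times\rC\to\rG$ is injective precisely when $\rS\cong\rPSL_2\C$. In the $\rPSL_2\C$ case your factorization $F\cong F_{\Theta_0}\circ G$ is a legitimate alternative packaging of the same content, trading the local Lie-theoretic step for the (classical) structural description of $\rPSL_2\C$-opers.

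In the $\rSL_2\C$ case your approach has a genuine gap. You describe $\Op_X(\rSL_2\C,\rB_\rS)$ as ``$\rH^0(X,\Kk^2)$ with $\{\pm\Id\}$-automorphisms'', but this omits the $2^{2g}$ non-isomorphic choices of $\Kk^{1/2}$: two $\rSL_2\C$-opers with the same $q$ but different spin structures are not isomorphic. Your $G$ therefore forgets the spin datum, and the would-be natural isomorphism $F\cong F_{\Theta_0}\circ G$ fails whenever $\Theta$ carries a spin structure different from that of $\Theta_0$: on such objects $F_\Theta(\Xi)$ and $F_{\Theta_0}(G(\Theta,\Xi))$ have $\rC$-reductions differing by the $\{\pm1\}$-bundle encoding the change of spin (this twist is visible on the $\rC$ side since $Z(\rS)\subset\rC$), and these are generally not isomorphic in $\Bb_{\fs,X}(\rG)$. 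For the same reason your $G$ is not full in this case, so even granting the factorization you could not conclude fullness of $F$ by composition. One can patch $G$ to also absorb the spin twist into $E_\rC$, but then $G$ depends on $\Theta_0$ and checking its properties reduces to the paper's local computation. The paper's direct argument sidesteps all of this by never fixing a base oper in the fullness step.
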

\begin{remark}
    Recall that when $\rG$ is an adjoint group, the subgroup $\rS<\rG$ is always isomorphic to $\rPSL_2\C$. Thus, for adjoint groups, the above functor is an equivalence of categories.
\end{remark} 
\subsection{Proofs of Theorems \ref{thm: equivalence of categories} and \ref{thm: no quadratic equivalence} }
In this section we will prove Theorems \ref{thm: equivalence of categories} and \ref{thm: no quadratic equivalence}. These proofs will be relatively straightforward consequences of Lemma \ref{lem local statement for functor}.  The reader is encouraged to skip to Theorem \ref{thm: parameterization picking PSL2 oper} to understand the usage of this technical statement. 

Fix an even $\fsl_2$-triple $\fs=\langle f,h,e\rangle\subset\fg$ and retain the notation from the previous sections. In particular, $\rP<\rG$ is the even JM-parabolic associated $\fs$ and $\rU<\rP$ is the unipotent radical which has a filtration $\rU^{m_N}<\cdots <\rU^1=\rU.$ Throughout this section we will use the $\Z$-grading \eqref{eq associated graded Z grading of parabolic}, i.e., $\fg_j\subset\fg$ is the $j^{th}$ eigenspace of $\ad_{\frac{h}{2}}.$  Recall the open orbits $\Oo_{-1}\subset \fg_{-1}$ and $\Oo_{\fs}=\C^{*}\cdot f,$ and the vector space $V=\textnormal{ker}(\textnormal{ad}_{e})\subset \fg.$  

Before proving Lemma \ref{lem local statement for functor} we prove two auxillary lemmas.

\begin{lemma}\label{lemma local no indices}
Let $U$ be a $1$-connected domain and consider holomorphic $1$-forms $\omega:TU\to\fg_{-1}\oplus\fp$ and $\eta:TU\to\fs$ such that $\omega(v)\in\Oo_{-1}+\fp$ and $\eta(v)\in\Oo_\fs+\fb_\fs$ for all nonzero $v\in TU.$ Then there exists a holomorphic map $\Psi:U\to\rP$ such that 
\begin{equation}\label{eq: local difference lambda=1}
	\psi:=\Ad(\Psi^{-1})\circ\omega+\Psi^*\theta_\rP-\eta: TU\to V,
\end{equation}
where $\theta_{\rP}$ is the left invariant Mauer-Cartan form on $\rP.$ 
\end{lemma}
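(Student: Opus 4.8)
The plan is to construct $\Psi$ inductively along the filtration $\rU^{m_N} < \cdots < \rU^1 = \rU$ of the unipotent radical, using Theorem \ref{thm: Kostant generalization} (the Lynch--Kostant slice theorem) as the main driving tool at each stage. First I would reduce to a normal form at the ``bottom'' of the filtration: since $\omega(v) \in \Oo_{-1} + \fp$ and $\eta(v) \in \Oo_\fs + \fb_\fs$, and since $f \in \Oo_{-1}$ (note $\fg_{-1}$ in the parabolic grading is $\fg_{-2}$ in the $\ad_h$-grading by Remark \ref{rem: comparing gradings for even JM-parabolics}, and $f$ lies in the open $\rL$-orbit there by Theorem \ref{thm Kostant-Malcev open orbit of sl2 triple}), after applying a first gauge transformation valued in $\rL$ (in fact in the $\rL$-conjugates bringing the $\fg_{-1}$-component of $\omega$ to agree with that of $\eta$) one may assume $\omega$ and $\eta$ have the same $\fg_{-1}$-component, namely the one corresponding to $f$. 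Here I would use that $U$ is $1$-connected so that any such holomorphic map into $\rL$ (or into the relevant homogeneous space) lifts; this is where simple-connectivity is essential.

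Having arranged that $\omega - \eta$ takes values in $\fp = \bigoplus_{j \geq 0}\fg_j$, I would then run the induction: suppose inductively that $\Psi_{i}: U \to \rU^{i}$ has been found (for suitable shift of indexing) so that $\Ad(\Psi_i^{-1})\circ \omega + \Psi_i^*\theta_\rP - \eta$ takes values in $f + V + \fg^{i-1}$ (using the notation and the conclusion of Theorem \ref{thm: Kostant generalization}). The point is that Theorem \ref{thm: Kostant generalization} gives an isomorphism of affine varieties $\rU^i \times \{f + V\} \xrightarrow{\sim} f + V + \fg^{i-1}$; applying this fiberwise and pointwise over $U$ to the holomorphic $1$-form values, and using that a holomorphic $1$-form on $U$ valued in $f + V + \fg^{i-1}$ factors through this isomorphism, I extract a holomorphic map $U \to \rU^i$ whose gauge action improves the error term by one step in the filtration. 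The standard subtlety is that the gauge action produces the extra Maurer--Cartan term $\Psi^*\theta_\rP$, so I would need to check that differentiating the map into $\rU^i$ contributes only terms in $\fu \subset \fp$ of the appropriate filtration degree, hence does not spoil the induction — this is a routine but careful bookkeeping with the grading, exactly as in the classical $\rB$-oper case of Beilinson--Drinfeld. Composing the finitely many $\Psi_i$ (which multiply within $\rU$) yields $\Psi: U \to \rU \subset \rP$ with $\psi := \Ad(\Psi^{-1})\circ\omega + \Psi^*\theta_\rP - \eta$ valued in $f + V$; subtracting the constant $f$, already absorbed into $\eta$, leaves values in $V$, which is the claim.

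The main obstacle I anticipate is the interaction between the pointwise/fiberwise application of Theorem \ref{thm: Kostant generalization} and the Maurer--Cartan correction term: the slice theorem is a statement about the \emph{values} of the connection forms, but the gauge transformation law \eqref{eq: local expresion for pullback of conn} introduces $\Psi^*\theta_\rP$, which is a derivative of $\Psi$ rather than an algebraic function of its value. The resolution is the standard observation that the Maurer--Cartan term of a map into $\rU^i$ lands in $\fg^i \subset \fg^{i-1}$ (differentiating a $\rU^i$-valued map stays within the Lie algebra of $\rU^i$ up to lower-order brackets), so it is absorbed harmlessly into the ``error'' part of the filtration at each inductive step and never disturbs the already-normalized higher-degree pieces; once this is set up correctly the induction closes. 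A secondary, purely foliage-level point is to confirm that all the maps produced are holomorphic — this follows because the isomorphism in Theorem \ref{thm: Kostant generalization} is an isomorphism of affine algebraic (hence complex) varieties, so its inverse is holomorphic, and holomorphicity is preserved under the operations performed.
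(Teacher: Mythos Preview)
Your proposal is correct and follows essentially the same approach as the paper's proof: an initial Levi/parabolic gauge to align the $\fg_{-1}$-components, followed by an induction along the unipotent filtration using Theorem \ref{thm: Kostant generalization}, with the key bookkeeping observation that the Maurer--Cartan term of a $\rU^i$-valued map lands in $\fu^i$ and is therefore absorbed at the next step. One small inconsistency: you take the first gauge in $\rL$ but then assert the composite $\Psi$ lies in $\rU$; the composite is in $\rP$ (as in the statement), not $\rU$, since the $\rL$-factor from the first step survives.
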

\begin{proof}
Since $\rP$ acts transitively on $\Oo_{-1}+\fp$ and $U$ is $1$-connected, there is a holomorphic map $\Psi_{0}: 
U\to \rP$ such that 
\[\Ad(\Psi_{0}^{-1})\circ \omega - \eta:TU\to  \fp.
\]
Define
\[
\omega_{0}:=\Ad(\Psi_{0}^{-1})\circ\omega + \Psi_{0}^{*}\theta_{\rL}:TU\to \Oo_{\fs} + \fp.
\]
By Theorem \ref{thm: Kostant generalization}, there exists a unique holomorphic $\Psi_{1}: U\to \rU^{1}$ such that
\[
\Ad(\Psi_{1}^{-1})\circ \omega_{0}: U\to \Oo_{\fs} + V
\]
Then, define
\[
\omega_{1}:=\Ad(\Psi_{1}^{-1})\circ\omega_{0} + \Psi_{1}^{*}\theta_{\rU^{1}}:TU\to \Oo_{\fs} +V+ \fu^{1} .
\]

Using Theorem \ref{thm: Kostant generalization} inductively for each $i\geq2$, we obtain a unique $\Psi_{i}: U\to \rU^{i}$ such that 
$\Ad(\Psi_{i}^{-1})\circ \omega_{i-1}: TU\to \Oo_{\fs}+ V,$
 and the resulting
\[
\omega_{i}:=\Ad(\Psi_{i}^{-1})\circ\omega_{i-1} + \Psi_{i}^{*}\theta_{\rU^{i}}:TU\to \Oo_{\fs} +V+ \fu^{i}.
\]
When $i=m_N,$ since $\fg^{m_N}\subset V,$ we obtain $\omega_{m_N}: TU\to \Oo_{\fs}+V.$  

Then $\Psi=\Psi_{0}\cdots \Psi_{m_N+1}:U\to \rP$ is the desired map since 
\begin{align*} 
\Ad(\Psi^{-1})\circ \omega + \Psi^{*}\theta_{\rP}&=\Ad({\Psi_{m_N+1}^{-1}})\circ \omega_{m_N}-\Ad(\Psi_{m_N+1}^{-1}\cdot\dots\cdot \Psi_{1}^{-1})\circ \Psi_{0}^{*}\theta_{L} \\
&-\sum_{i=1}^{m_N} \Ad(\Psi_{m_N+1}^{-1}\cdot\dots\cdot \Psi_{i+1}^{-1})\circ \Psi_{i}^{*}\theta_{\rU^{i}} + \Psi^{*}\theta_{\rP} \\
&=\Ad(\Psi_{m_N+1}^{-1})\circ \omega_{m_N} + \Psi_{m_N+1}^{*}\theta_{\rU_{\rS}}.
\end{align*}
\end{proof}
We also need the following Lie theoretic lemma. 
\begin{lemma}\label{lemma Ad_U(f) not in V}
	Let $u\in\rU$  
	\begin{enumerate}
		\item If $u=\exp(x_{j}+\cdots+x_{m_N})$, where $x_k\in\fg_{k},$ then  $\Ad(u)(f)=f-\ad_f(x_j)+b$ for $b\in\fu^j$.
		\item  Suppose $u$ is in the group generated by $\rU_\rS$ and $\rU^j$ for $j\geq 2.$ If the projection of $\Ad(u)(f)$ onto $\ad_f(\fu^j)\cap\fg_{j-1}$ is zero, then $u$ is in the group generated by $\rU_\rS$ and $\rU^{j+1}.$
	\end{enumerate}
\end{lemma}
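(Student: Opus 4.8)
The plan is to prove Lemma \ref{lemma Ad_U(f) not in V} by a direct computation with the exponential map, exploiting the $\Z$-grading $\fg=\bigoplus_j\fg_j$ and the fact that for an even JM-parabolic the odd graded pieces vanish, so that $\fu=\bigoplus_{j\geq 1}\fg_j$ with nonzero pieces only in even degree (using Remark \ref{rem: comparing gradings for even JM-parabolics}, $\fg_j$ in the canonical filtration is $\fg_{2j}$ in the $\ad_h$-weight grading). The key structural fact I would keep in mind throughout is that $\ad_f$ maps $\fg_k\to\fg_{k-1}$ and, by $\fsl_2$-representation theory, $\ad_f\colon\fg_k\to\fg_{k-1}$ is injective for $k\geq 1$ (every $\ad_h$-weight space of positive weight injects under $\ad_f$, since the kernel of $\ad_f$ sits in non-positive weights); equivalently $\ad_f(\fu^j)=\bigoplus_{k\geq j-1}\ad_f(\fg_k)$ and the lowest-degree piece $\ad_f(\fg_j)\subset\fg_{j-1}$ is a complement to $V\cap\fg_{j-1}$.

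For part (1), I would write $u=\exp(x_j+\cdots+x_{m_N})$ with $x_k\in\fg_k$ and expand $\Ad(u)(f)=e^{\ad_{x_j+\cdots}}(f)=f+[x_j+\cdots,f]+\tfrac12[x_j+\cdots,[x_j+\cdots,f]]+\cdots$. The term $[x_j,f]=-\ad_f(x_j)\in\fg_{j-1}$; every other term in the expansion lands in $\fg_{\geq j}$: indeed $[x_k,f]\in\fg_{k-1}\subset\fg_{\geq j-1}$ but for $k>j$ it is in $\fg_{\geq j}$, and all higher-order brackets involve at least two factors from $\fg_{\geq j}$ together with $f\in\fg_{-2}$, landing in $\fg_{\geq 2j-2}\subset\fg_{\geq j}$ since $j\geq 2$ forces $2j-2\geq j$. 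Hence $\Ad(u)(f)=f-\ad_f(x_j)+b$ with $b\in\fg_{\geq j}=\fu^j$, which is exactly the claim. (The edge case $j=1$ is not needed here since the lemma is applied with $j\geq 2$ in Lemma \ref{lemma local no indices}, but the same computation works, with $b\in\fu^1$.)

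For part (2), suppose $u$ lies in the group generated by $\rU_\rS=\exp(\langle e\rangle)$ and $\rU^j$ for some $j\geq 2$. Since $\langle e\rangle\subset\fg_2\subset\fu^j$ only when $j\leq 2$, I should be slightly careful: when $j\geq 3$ the group generated by $\rU_\rS$ and $\rU^j$ consists of products $\exp(te)\cdot v$ with $v\in\rU^j$ (as $\rU^j$ is normal in $\rP$ and $\rU_\rS$ normalizes it), so I can write $u=\exp(te)\cdot\exp(y)$ with $y\in\fu^j$, $y=y_j+y_{j+1}+\cdots$ in graded components. Using part (1) on $\exp(y)$ and then applying $\Ad(\exp(te))$ — which preserves the grading and fixes $f$ modulo $\fg_{\geq 0}$ only up to the $\fsl_2$-action, but crucially sends $\fg_{j-1}\to\fg_{j-1}$ — I compute $\Ad(u)(f)=\Ad(\exp(te))\big(f-\ad_f(y_j)+b\big)$ with $b\in\fu^j$. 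The component in $\fg_{j-1}$ of this expression: $\Ad(\exp(te))f = f + t[e,f]+\cdots=f+th+\cdots$ contributes in degrees $-2,0,2,\dots$, none equal to $j-1$ which is odd when... no — $j-1$ can be even. Here I need $j\geq 2$: the $\fg_{j-1}$-component of $\Ad(\exp(te))f$ is zero because $\Ad(\exp(te))f\in\fg_{-2}\oplus\fg_0\oplus\fg_2\oplus\cdots$ (applying $\ad_e$ raises degree by $2$), and $j-1$ could coincide with $0,2,4,\dots$. So this needs more care: I would instead argue that the $\fg_{j-1}$-component of $\Ad(u)(f)$ is $-\ad_f(y_j)$ plus possibly a contribution from $\Ad(\exp(te))$ acting on the degree-$(j-1)$ part of $f-\ad_f(y_j)+b$. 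Since $\ad_f(y_j)\in\fg_{j-1}$ and $b\in\fg_{\geq j}$, the projection onto $\fg_{j-1}$, after conjugating by $\exp(te)$, still has leading term $-\ad_f(y_j)$ modulo terms that I can absorb — but the cleanest route is to use that $\ad_f(\fg_j)$ and $\ad_f(\fu^j)\cap\fg_{j-1}$ coincide (both equal $\ad_f(\fg_j)$, since only $\fg_j\subset\fu^j$ contributes to degree $j-1$ under $\ad_f$), and that $\ad_f$ is injective on $\fg_j$. Thus if the projection of $\Ad(u)(f)$ onto $\ad_f(\fu^j)\cap\fg_{j-1}=\ad_f(\fg_j)$ vanishes, then $\ad_f(y_j)=0$, hence $y_j=0$, hence $y\in\fu^{j+1}$, hence $u$ is in the group generated by $\rU_\rS$ and $\rU^{j+1}$.

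The main obstacle is handling the interaction between the $\rU_\rS$-factor $\exp(te)$ and the graded components in part (2): one must check that conjugation by $\exp(te)$ does not introduce a spurious $\fg_{j-1}$-component into the Slodowy-transverse direction $\ad_f(\fg_j)$. I expect this to come down to the observation that $\Ad(\exp(te))$ acts on $V=\ker(\ad_e)$ and on the complement $\ad_f(\fu)$ in a way compatible with the grading, so that $\ad_f(y_j)$ — which is the obstruction to $u$ lying in the smaller group — is not cancelled by any $V$-valued or higher-order term; this is precisely where the even hypothesis and the injectivity of $\ad_f$ on positive-weight spaces (Theorem \ref{thm: Kostant generalization}, or rather the underlying $\fsl_2$-theory behind it) do the work. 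I would structure the final write-up so that part (1) is the clean exponential expansion and part (2) reduces, via normality of $\rU^j$ and part (1), to the injectivity statement.
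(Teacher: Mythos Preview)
Your approach to part (1) is correct and matches the paper's direct exponential expansion. One persistent slip: in the grading convention fixed at the start of this section, $\fg_j$ is the $j$-eigenspace of $\ad_{h/2}$, so $f\in\fg_{-1}$ (not $\fg_{-2}$) and $\ad_e$ raises degree by $1$ (not $2$). This only improves your estimates --- higher-order brackets land in $\fg_{\geq 2j-1}$ rather than $\fg_{\geq 2j-2}$ --- so the argument survives, but you should fix the indexing.

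For part (2) you take a different route from the paper. The paper writes $u=\exp(\lambda e + x_j + \cdots + x_{m_N})$ as a \emph{single} exponential (legitimate since $\langle e\rangle + \fu^j$ is a nilpotent subalgebra for $j\geq 2$ and $\exp$ is a diffeomorphism onto the corresponding unipotent group), then reads off directly that
\[
\Ad(u)(f)=f+\lambda h - \lambda^2 e - \ad_f(x_j) + b,\qquad b\in\fu^j.
\]
Your product factorization $u=\exp(te)\cdot\exp(y)$ forces you to compose $\Ad(\exp(te))$ with the output of part (1), and this is exactly where you flag the ``main obstacle'' and then leave it open.

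That gap is real but easy to close, and you should do so rather than gesture at it. Compute explicitly $\Ad(\exp(te))f = f + th - t^2 e$ (the series terminates). For $j\geq 3$ this contributes nothing in degree $j-1$; for $j=2$ it contributes $-t^2 e\in\fg_1$. But $e\in V_1=\ker(\ad_e)\cap\fg_1$, which is the complement of $\ad_f(\fg_2)$ in the decomposition $\fg_1=V_1\oplus\ad_f(\fg_2)$, so its projection onto $\ad_f(\fu^2)\cap\fg_1=\ad_f(\fg_2)$ vanishes. In all cases the projection is exactly $-\ad_f(y_j)$, and injectivity of $\ad_f$ on $\fg_j$ finishes the argument. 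Your heuristic that $\Ad(\exp(te))$ ``acts on $V$ and on $\ad_f(\fu)$ compatibly'' is not the right mechanism: $\Ad(\exp(te))$ fixes $V$ pointwise but does \emph{not} preserve $\ad_f(\fu)$, so that line of reasoning would not have closed the gap. The paper's single-exponential form sidesteps the whole issue and is the cleaner write-up.
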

\begin{proof}
	The first part follows from a direct computation. We have
	\[\Ad(u)(f)=\sum_{i=0}^\infty\frac{\ad_x^i(f)}{i!}=f-\ad_f(x_j)+b,\]
	where $b\in\fu^j$ is given by $b=-\sum_{k=j+1}^{m_N}\ad_f(x_k)+\sum_{i=2}^\infty\frac{\ad_x^i(f)}{i!}$.
	

 For the second part, suppose $u$ is in the group generated by $\rU_\rS$ and $\rU^j$ for $j\geq 2$. Then $u=\exp(\lambda e+x_j+\cdots+x_{m_N})$ for $\lambda\in\C$ and $x_k\in\fg_k.$ A computation shows that
\[\Ad(u)(f)=\sum_{i=0}^\infty\frac{\ad_x^i(f)}{i!}=f+\ad_{\lambda e}(f)+\frac{\ad_{\lambda e}^2(f)}{2}-\ad_f(x_j)+b,\]
where $b\in\fu^j$. The projection onto $\ad_f(\fu^j)\cap\fg_{j-1}$ is $-\ad_f(x_j),$ and is zero if and only if $x_j=0$. Hence, this projection is zero if and only if $u$ is in the group generated by $\rU_\rS$ and $\rU^{j+1}.$
\end{proof}
	\begin{lemma}\label{lem local statement for functor}
	Let $U_{\alpha},U_{\beta}\subset \C$ be $1$-connected domains such that $U_{\alpha\beta}:=U_{\alpha}\cap U_{\beta}\neq \emptyset$ is $1$-connected. For $i=\alpha,\beta$, let $\omega_i$, $\eta_i$, $\Psi_i$ and $\psi_i$ be as in Lemma \ref{lemma local no indices}. Suppose there exists holomorphic maps $p_{\ab}:U_{\alpha\beta}\rightarrow \rP$ and $b_{\ab}:U_{\alpha\beta}\rightarrow \rB_{\rS}$ such that
\[\xymatrix{\omega_{\alpha}=\Ad(p_{\ab}^{-1})\circ \omega_{\beta} + p_{\ab}^*\theta_{\rP}&\text{and}&
\eta_{\alpha}=\Ad(b_{\ab}^{-1})\circ \eta_{\beta} + b_{\ab}^*\theta_{\rB_{\rS}}}.\]
 If $q_{\ab}:=\Psi_{\beta}^{-1}\cdot p_\ab\cdot \Psi_{\alpha}:U_{\alpha\beta}\rightarrow \rP$, then
 \begin{equation}\label{preserves connection}
\Ad(q_{\ab}^{-1})(\eta_{\beta} + \psi_{\beta}) + q_{\ab}^{*}\theta_{P}=\eta_{\alpha}+\psi_{\alpha}.
 \end{equation}
Furthermore, the holomorphic map $q_{\ab}\cdot b_\ab^{-1}: U_\ab\to\rP$ is valued in $\rC$.
	\end{lemma}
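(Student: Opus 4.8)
The plan is to verify the two assertions in turn, both by unwinding the defining relations and using the transformation rule \eqref{eq: local difference lambda=1} together with the cocycle relations assumed for $\omega_\bullet$ and $\eta_\bullet$. For the first assertion, I would start from the definition $\psi_i = \Ad(\Psi_i^{-1})\circ\omega_i + \Psi_i^*\theta_\rP - \eta_i$ (so $\eta_i + \psi_i = \Ad(\Psi_i^{-1})\circ\omega_i + \Psi_i^*\theta_\rP$), and substitute this for both $i=\alpha$ and $i=\beta$ into \eqref{preserves connection}. The left-hand side becomes
\[
\Ad(q_\ab^{-1})\bigl(\Ad(\Psi_\beta^{-1})\circ\omega_\beta + \Psi_\beta^*\theta_\rP\bigr) + q_\ab^*\theta_\rP,
\]
and using $q_\ab = \Psi_\beta^{-1} p_\ab \Psi_\alpha$ together with the standard cocycle identity for how the expression $\Ad(g^{-1})\circ(-) + g^*\theta_\rP$ composes under products (equivalently, that two local connection forms related by a gauge transformation stay related after a further common gauge transformation — this is just \eqref{eq: local expresion for pullback of conn} applied twice), this collapses to $\Ad(\Psi_\alpha^{-1})\circ\bigl(\Ad(p_\ab^{-1})\circ\omega_\beta + p_\ab^*\theta_\rP\bigr) + \Psi_\alpha^*\theta_\rP$. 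By the hypothesis $\omega_\alpha = \Ad(p_\ab^{-1})\circ\omega_\beta + p_\ab^*\theta_\rP$, the inner term is exactly $\omega_\alpha$, and the whole thing is $\Ad(\Psi_\alpha^{-1})\circ\omega_\alpha + \Psi_\alpha^*\theta_\rP = \eta_\alpha + \psi_\alpha$. This is essentially bookkeeping with the gauge action, so I expect it to be routine.

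For the second assertion, that $q_\ab\cdot b_\ab^{-1}$ is valued in $\rC$, the idea is to track where the $\fs$-part of the connections goes. Set $g_\ab := q_\ab\cdot b_\ab^{-1} \colon U_\ab \to \rP$. Combining \eqref{preserves connection} with the transformation law $\eta_\alpha = \Ad(b_\ab^{-1})\circ\eta_\beta + b_\ab^*\theta_{\rB_\rS}$, a short computation shows that $g_\ab$ conjugates $\eta_\beta + \psi_\beta$ to $\eta_\beta + \Ad(b_\ab)\circ(\psi_\alpha - \text{correction terms})$; more precisely, writing out both sides, $g_\ab$ gauges the $\rP$-connection form $\eta_\beta + \psi_\beta$ to $\eta_\beta + (\text{something valued in }V)$, because conjugating $\psi_\alpha$ (valued in $V$) by $b_\ab \in \rB_\rS$ keeps it in $V$ (since $\rB_\rS$ preserves the $(\fb_\fs\oplus\fc)$-invariant decomposition \eqref{eq highest weight space decomp} of $V$). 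So $g_\ab$ is a $\rP$-valued gauge transformation taking $f + (\text{stuff in }V) + \eta_\beta^{\text{Borel part}}$ to another element of the same shape, with the $\Oo_\fs$-component ($= f$ up to the $\C^*$-scaling and the upper-triangular part of $\eta_\beta$) preserved.

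The main obstacle — and the place where the even-JM and Slodowy-slice structure really enters — is upgrading "$g_\ab$ preserves the form $\eta_\beta + V$ up to the $\rB_\rS$-action" to "$g_\ab \in \rC$". Here is the mechanism I would use: the uniqueness clauses in Theorem \ref{thm: Kostant generalization} force the $\rU^j$-components of $g_\ab$ to vanish one at a time. Concretely, write $g_\ab = c \cdot u$ with (a candidate) $c$ in the Levi times $\rB_\rS$ and $u \in \rU$; applying Lemma \ref{lemma Ad_U(f) not in V}(2) inductively — the projection of $\Ad(g_\ab)(f)$ onto $\ad_f(\fu^j)\cap\fg_{j-1}$ must be zero because the gauged connection still has its lowest-weight part equal to $f$ modulo $V$ — peels off the unipotent radical filtration step by step, yielding that $g_\ab$ lies in the group generated by $\rU_\rS$ and the Levi. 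On the Levi, the transitivity/stabilizer statement of Theorem \ref{thm Kostant-Malcev open orbit of sl2 triple} (the $\rL$-stabilizer of $e$ is $\rC$) combined with the fact that the diagonal $\ad_{h/2}$-part is pinned down by the oper condition forces the Levi component of $g_\ab b_\ab^{-1}$ into $\rC$. I would organize this as a clean induction on the filtration index, with Lemma \ref{lemma Ad_U(f) not in V}(1)--(2) supplying the inductive step and Theorem \ref{thm Kostant-Malcev open orbit of sl2 triple} supplying the base case, being careful that the $\rB_\rS$-twist $b_\ab$ exactly absorbs the freedom in the $\C^*\cdot f$ scaling and the $\langle e\rangle$-direction so that nothing extra survives.
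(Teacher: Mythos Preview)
Your proposal is correct and follows essentially the same route as the paper. Part~1 is identical: both you and the paper reduce \eqref{preserves connection} to the gauge-cocycle identity applied to $\eta_i+\psi_i=\Ad(\Psi_i^{-1})\circ\omega_i+\Psi_i^*\theta_\rP$. For part~2, the mechanism is also the same --- a graded-piece-by-graded-piece analysis driven by Lemma~\ref{lemma Ad_U(f) not in V} and the stabilizer statement of Theorem~\ref{thm Kostant-Malcev open orbit of sl2 triple} --- but the paper organizes it slightly differently: rather than passing immediately to $g_\ab=q_\ab b_\ab^{-1}$, it works with $q_\ab$ itself, decomposes $q_\ab=u_\ab\cdot l_\ab$ in $\rP=\rU\rtimes\rL$, and reads off from the $(-1)$-, $0$-, $1$-, \ldots\ graded pieces of the rewritten equation $\psi_\alpha-\Ad(q_\ab^{-1})\psi_\beta+\eta_\alpha-\Ad(q_\ab^{-1})\eta_\beta=q_\ab^*\theta_\rP$ that $l_\ab\in\rL_\rS\cdot\rC$ and (inductively via Lemma~\ref{lemma Ad_U(f) not in V}(2)) $u_\ab\in\rU_\rS$; only at the very end does it compare the resulting $\rB_\rS$-factor $\widehat b_\ab$ of $q_\ab$ with $b_\ab$, using the $(-1)$- and $0$-pieces once more to see they differ by the center of $\rS\subset\rC$.

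Two small points where your sketch would need tightening to match that argument. First, the phrase ``$c$ in the Levi times $\rB_\rS$'' is ill-posed since $\rU_\rS\subset\rU$; you want the honest $\rU\rtimes\rL$ decomposition. Second, Lemma~\ref{lemma Ad_U(f) not in V}(2) by itself only gets the unipotent part into $\rU_\rS$, not to the identity; killing the residual $\rU_\rS$-piece requires a separate look at the $\langle h\rangle$-component of the $0$-graded piece (this is exactly where the paper's comparison of $\widehat b_\ab$ with $b_\ab$ enters). Your final sentence anticipates this, but in execution the Maurer--Cartan term $g_\ab^*\theta_\rP$ must be tracked explicitly in each grade --- the paper does this, and it is where the $\fc$-valued piece $c_\ab^*\theta_\rP$ appears as the connection form on the emerging $\rC$-bundle.
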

	\begin{proof}
By assumption we have $\psi_i+\eta_i=\Ad(\Psi_i^{-1})\circ \omega_i+\Psi_i^*\theta_\rP$ for $i=\alpha,\beta$. Equation \eqref{preserves connection} follows from the definition of $q_\ab$:
\begin{align*}
\Ad(q_\ab^{-1})\circ(\eta_{\beta} + \psi_{\beta}) &=\Ad(\Psi_{\alpha}^{-1}\cdot p_\ab^{-1} \cdot \Psi_{\beta})(\Ad(\Psi_{\beta}^{-1})\circ \omega_{\beta}+\Psi_{\beta}^{*}\theta_{\rP}) \\
&= \Ad(\Psi_{\alpha}^{-1})\circ \omega_{\alpha} + \Psi_{\alpha}^{*}\theta_{\rP} - q_\ab^{*}\theta_{P} \\
&=\eta_{\alpha} + \psi_{\alpha} - q_\ab^{*}\theta_{\rP}.
\end{align*}

Rewriting \eqref{preserves connection} leads to
    \begin{equation}
        \label{eq alpha beta difference} 
        \psi_\alpha-\Ad(q_\ab^{-1})\circ\psi_\beta+\eta_\alpha-\Ad(q_\ab^{-1})\circ\eta_\beta=q_\ab^*\theta_\rP.
    \end{equation}Since $\rP\cong\rL\ltimes\rU$, there are unique holomorphic maps $ l_\ab:U_\ab\to \rL$ and $u_\ab:U_\ab\to\rU$ such that $q_\ab=u_\ab\cdot  l_\ab$. 

Equation \eqref{eq alpha beta difference} is a direct sum of graded pieces. By hypothesis, the $(-1)$-graded piece is the first non-zero term. Since the $(-1)$-graded pieces of $\psi_\alpha,$ $\Ad(q_\ab^{-1})\circ\psi_\beta$ and $q_\ab^*\theta_\rP$ vanish, we have
    \begin{equation}\label{eq -1 piece}
    	(\eta_\alpha)_{-1}-(\Ad(q_\ab^{-1})\circ\eta_\beta)_{-1}~=~(\eta_\alpha)_{-1}-\Ad( l_\ab^{-1})\circ(\eta_\beta)_{-1}~=~0.
    \end{equation}
Thus, $ l_\ab=\widehat l_{\rS,\ab}\cdot c_\ab$ for holomorphic maps $c_\ab:U_\ab\to\rC$ and $\widehat l_{\rS,\ab}:U_\ab\to\rL_\rS$, where $\rL_\rS<\rB_\rS$ is the Levi associated to $\langle h\rangle\subset\fb_\fs.$

For the zeroth graded piece we have 
    \begin{equation}\label{eq 0 graded piece}(\eta_\alpha)_{0}-(\Ad(q_\ab^{-1})\circ\eta_\beta)_{0}+(\psi_\alpha)_0-(\Ad(q_\ab^{-1})\circ\psi_\beta)_0=c_\ab^*\theta_\rP+\widehat l_{\rS,\ab}^*\theta_\rP.
    \end{equation}
The term $(\psi_\alpha)_0-(\Ad(q_\ab^{-1})\circ\psi_\beta)_0$ takes values in $\fc.$ Since $(q_\ab^*\theta_\rP)_0$ is valued $\langle h\rangle\oplus\fc,$ the term $(\eta_\alpha)_{0}-(\Ad(q_\ab^{-1})\circ\eta_\beta)_{0}$ is valued in $\langle h\rangle.$ Thus, 
    \begin{equation}\label{eq C transitions}
	(\psi_\alpha)_0-(\Ad(q_\ab^{-1})\circ\psi_\beta)_0=(\psi_\alpha)_0-\Ad(c_\ab^{-1})\circ(\psi_\beta)_0=c_\ab^*\theta_\rP.
\end{equation}
Since $\rC$ and $\rL_\rS$ act trivially on $\langle h\rangle$, the other term is given by
\begin{equation}
	\label{eq 0 term}(\eta_\alpha)_0-(\eta_\beta)_0-\Ad(\widehat l_{\rS,\ab}^{-1})\circ(\Ad(u_\ab)\circ\eta_\beta)_0=\widehat l_{\rS,\ab}^*\theta_\rP.
\end{equation}
Since $(\eta_\beta)_{-1}$ is valued in $\langle f\rangle$, Lemma \ref{lemma Ad_U(f) not in V} implies $u_\ab$ is valued in the group generated by $\rU_\rS$.

Now, the first graded piece is valued in $\langle e\rangle$ and given by 
\[(\eta_\alpha)_{1}-(\Ad(q_\ab^{-1})\circ\eta_\beta)_{1}+(\psi_\alpha)_1-(\Ad(q_\ab^{-1})\circ\psi_\beta)_1=(q_\ab^*\theta_\rP)_1.\]
Since $\psi_\beta$ is valued in $V$ and $q_\ab$ is valued in the group generated by $\rC,~\rB_\rS$ and $\rU^2,$ the term $(\psi_\alpha)_1-(\Ad(q_\ab^{-1})\circ\psi_\beta)_1$ is valued in $V$. Hence the projection of $(\Ad(q_\ab^{-1})\circ\eta_\beta)_{1}$ onto $\ad_f(\fu^2)$ is zero. In particular, this implies that the projection of $\Ad(u_\ab^{-1})\circ (\eta_\beta)_{-1}$ onto $\ad_f(\fu^2)$ is zero. Hence, $u_\ab$ takes values in the group generated by $\rU_\rS$ and $\rU^3$ by Lemma \ref{lemma Ad_U(f) not in V}.

 
For $j\geq 2$, assume $u_\ab$ is valued in the group generated by $\rU_\rS$ and $\rU^{j+1}$. Then $0=(q_\ab^*\theta_P)_j$ and the term $(\psi_\alpha)_j-(\Ad(q_\ab^{-1})\circ\psi_\beta)_j$ is valued in $V.$ Thus, $(\Ad(q_\ab^{-1})\circ\eta_\beta)_{j}=0$. By Lemma \ref{lemma Ad_U(f) not in V}, the image of $q_\ab$ is contained in the subgroup of $\rP$ generated by $\rC,$ $\rB_\rS$ and $\rU^{j+2}$, and 
\begin{equation}
    \label{eq psi_j trans like tensor}(\psi_\alpha)_j-(\Ad(q_\ab^{-1})\circ\psi_\beta)_j=0.
\end{equation}
Since $\rU^{m_N+1}=0,$ $q_\ab$ is valued in the subgroup generated by $\rB_\rS$ and $\rC.$ 

Hence, we have shown that $q_{\ab}$ factors as $c_{\ab}\cdot \widehat b_\ab$, for holomorphic maps $c_\ab:U_\ab\to\rC$ and $\widehat b_\ab:U_\ab\to\rB_\rS$. We also have $b_\ab:U_\ab\to \rB_\rS$ such that $\eta_\alpha=\Ad(b_\ab^{-1})\circ\eta_\beta+b_\ab^*\theta_{\rB_\rS}.$ Moreover, there are unique holomorphic maps $l_{\rS,\ab},\widehat l_{\rS,\ab}: U_\ab\to\rL_\rS$ and $u_{\rS,\ab}, \widehat u_{\rS,\ab}:U_\ab\to\rU_\rS$ such that $b_\ab=l_{\rS,\ab}\cdot u_\ab$ and $\widehat b_\ab=\widehat l_{\rS,\ab}\cdot\widehat  u_\ab$, respectively. 

Using \eqref{eq -1 piece}  and the fact that $\rC$ acts trivially on $\Oo_\fs$, we have 
\[\Ad(\widehat l_{\rS,\ab})\circ (\eta_\beta)_{-1}-\Ad(l_{\rS,\ab})\circ (\eta_\beta)_{-1}=0.\]
Since $\rL_\rS$ acts transitively on $\Oo_\fs$ with stabilizer the center of $\rS,$ $l_{\rS,\ab}$ and $\widehat l_{\rS,\ab}$ differ by an element of the center of $\rS.$ In particular, $l_{\rS,\ab}^*\theta_{\rL_\rS}=\widehat l_{\rS,\ab}^*\theta_{\rL_\rS}.$ 

Now using \eqref{eq 0 term}, we have 
\[\Ad(\widehat l_{\rS,\ab}^{-1})(\Ad(\widehat u_{\rS,\ab}^{-1})\circ \eta_\beta)_{0}-\Ad(l_{\rS,\ab}^{-1})(\Ad(u_{\rS,\ab}^{-1})\circ \eta_\beta)_{0}=0.\]
Note that $\Ad(\widehat l_{\rS,\ab}^{-1})=\Ad(l_{\rS,\ab}^{-1})$ and, for $u\in\rU_\rS,$
\[(\Ad(u)\circ \eta_\beta)_{0}=(\eta_\beta)_0+(\Ad(u)\circ(\eta_\beta)_{-1})_0.\]
Thus, 
\[(\Ad(u_{\rS,\ab})\circ(\eta_\beta)_{-1})_0-(\Ad(\widehat u_{\rS,\ab})\circ(\eta_\beta)_{-1})_0=0,\]
and hence $u_{\rS,\ab}=\widehat u_{\rS,\ab}.$ 

From the above argument $\widehat b_{\ab}\cdot b_{\ab}^{-1}$ is valued in the center of $\rS$ and hence in $\rC.$ Thus $q_\ab=c_\ab\cdot \widehat b_\ab$ satisfies $q_\ab\cdot b_\ab^{-1}:U_\ab\to\rC.$ 
\end{proof}

The proofs of Theorems \ref{thm: equivalence of categories} and \ref{thm: no quadratic equivalence} follow almost immediately from the following parameterization theorem.
\begin{theorem}\label{thm: parameterization picking PSL2 oper}
Let $\rG$ be a connected complex semisimple Lie group, $\fs=\langle f,h,e\rangle\subset\fg$ be an even $\fsl_2$-triple, $\rS<\rG$ be the connected subgroup with Lie algebra $\fs,$ $\rC<\rG$ be the centralizer of $\fs$ and $\rP<\rG$ be the associated even JM-parabolic. 
Let $\Theta=(F_{\rS}, F_{\rB_{\rS}}, \eta)$ be an $(\rS,\rB_\rS)$-oper and $(E_{\rG}, E_{\rP}, \omega)$ a $(\rG,\rP)$-oper.  Then, there exists a flat $\rC$-bundle $(E_{\rC}, \psi_0)$ and an isomorphism
$\Psi: (E_{\rC}\star F_{\rB_{\rS}})[\rP]\rightarrow E_{\rP}$
such that 
\begin{equation}\label{global normal form}
\Psi^{*}\omega-\eta\star \psi_0=\sum_{j=1}^{N}\psi_{m_j}\in \bigoplus_{j=1}^{N}H^{0}(X, \Kk\otimes \left(E_{\rC}\star F_{\rB_{\rS}}[V_{2m_j}]\right)).
\end{equation}
Finally, if $(\widetilde{E_{\rC}}, \widetilde{\psi_{0}})$ and $\widetilde{\Psi}$ is another flat $\rC$-bundle and isomorphism, respectively, which satisfies \eqref{global normal form}, then the isomorphism $\widetilde{\Psi}^{-1}\Psi: (E_{\rC}\star F_{\rB_{\rS}})[\rP]\rightarrow (\widetilde{E_{\rC}}\star F_{\rB_{\rS}})[\rP]$ induces an isomorphism in the Slodowy category
between $(E_{\rC}, \psi_0, \sum_{j=1}^{N}\psi_{m_j})$ and $(\widetilde{E_{\rC}}, \widetilde{\psi_{0}}, \sum_{j=1}^{N} \widetilde{\psi_{m_j}}).$  

\end{theorem}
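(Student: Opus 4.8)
The plan is to globalize the two local inputs, Lemma~\ref{lemma local no indices} and Lemma~\ref{lem local statement for functor}, by a gauge-theoretic gluing argument over a good cover of $X$. First I would fix a cover $\{U_\alpha\}$ of $X$ by $1$-connected coordinate disks with $1$-connected pairwise intersections $U_{\alpha\beta}$, chosen to simultaneously trivialize $E_\rP$ and $F_{\rB_\rS}$; write $p_{\alpha\beta}\colon U_{\alpha\beta}\to\rP$ and $b_{\alpha\beta}\colon U_{\alpha\beta}\to\rB_\rS$ for the transition functions and $\omega_\alpha,\eta_\alpha$ for the local connection forms of $\omega$ and $\eta$. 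Unwinding Definition~\ref{Def oper def} (using that $\Oo\subset\fg^{-1}/\fp$, so the oper condition forces the $\fg_{<-1}$-part mod $\fp$ to vanish and the $\fg_{-1}$-part to be nondegenerate), the two oper conditions say exactly that $\omega_\alpha(v)\in\Oo_{-1}+\fp$ and $\eta_\alpha(v)\in\Oo_\fs+\fb_\fs$ for all nonzero $v\in TU_\alpha$, so Lemma~\ref{lemma local no indices} produces holomorphic maps $\Psi_\alpha\colon U_\alpha\to\rP$ for which $\psi_\alpha:=\Ad(\Psi_\alpha^{-1})\circ\omega_\alpha+\Psi_\alpha^*\theta_\rP-\eta_\alpha$ is valued in $V=\fc\oplus\bigoplus_j V_{2m_j}\subset\fp$.

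Applying Lemma~\ref{lem local statement for functor} on each $U_{\alpha\beta}$ to the data $(\omega_\alpha,\eta_\alpha,p_{\alpha\beta},b_{\alpha\beta})$, the maps $q_{\alpha\beta}:=\Psi_\beta^{-1}p_{\alpha\beta}\Psi_\alpha$ satisfy \eqref{preserves connection} and $c_{\alpha\beta}:=q_{\alpha\beta}b_{\alpha\beta}^{-1}$ takes values in $\rC$. Since $\rC$ centralizes $\rB_\rS$ and $\{p_{\alpha\beta}\},\{b_{\alpha\beta}\}$ are cocycles, so is $\{c_{\alpha\beta}\}$, defining a holomorphic $\rC$-bundle $E_\rC$; by the $\star$-construction the transition functions of $(E_\rC\star F_{\rB_\rS})[\rP]$ are $c_{\alpha\beta}b_{\alpha\beta}=q_{\alpha\beta}$, so the $\Psi_\alpha$ assemble into an isomorphism $\Psi\colon(E_\rC\star F_{\rB_\rS})[\rP]\to E_\rP$. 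It then remains to split $\psi_\alpha$ along $V=\fc\oplus\bigoplus_j V_{2m_j}$: the transformation rules obtained in the course of proving Lemma~\ref{lem local statement for functor} (compare \eqref{eq C transitions} and \eqref{eq psi_j trans like tensor}) show that the $\fc$-components glue to a holomorphic connection $\psi_0$ on $E_\rC$, while for each $j$ the $V_{2m_j}$-components transform tensorially --- here one uses that $\rU_\rS$ acts trivially on $V=\ker\ad_e$ and that $\rB_\rS$ acts on $V_{2m_j}$ through its weight-$2m_j$ character, together with the identification \eqref{eq iso levi to slodowy} --- and hence glue to a holomorphic section $\psi_{m_j}\in H^0(X,\Kk\otimes(E_\rC\star F_{\rB_\rS})[V_{2m_j}])$. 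Since on each $U_\alpha$ the local form of the (globally defined) difference of connections $\Psi^*\omega-\eta\star\psi_0$ is $\psi_\alpha-(\psi_\alpha)_\fc=\sum_j(\psi_{m_j})_\alpha$, this establishes \eqref{global normal form}.

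For uniqueness, suppose $(\widetilde{E_\rC},\widetilde{\psi_0})$ and $\widetilde\Psi$ also satisfy \eqref{global normal form}, with attendant $\widetilde{\psi_{m_j}}$, $\widetilde\Psi_\alpha$, $\widetilde\psi_\alpha$. The composite $\widetilde\Psi^{-1}\Psi$ has local expressions $\widetilde\Psi_\alpha^{-1}\Psi_\alpha$, which is precisely what Lemma~\ref{lem local statement for functor} computes in the degenerate case $U_\alpha=U_\beta$, with $\omega_\alpha=\omega_\beta$ and $\eta_\alpha=\eta_\beta$ the local forms of the fixed oper and of $\eta$, and with $p_{\alpha\beta}=b_{\alpha\beta}=\Id$; hence $\widetilde\Psi_\alpha^{-1}\Psi_\alpha\in\rC$. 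Thus $\widetilde\Psi^{-1}\Psi$ restricts to an isomorphism $\Phi\colon E_\rC\to\widetilde{E_\rC}$ of holomorphic $\rC$-bundles acting trivially on the $F_{\rB_\rS}$-factor, and reading off \eqref{preserves connection} on the summands $\fc$ and $V_{2m_j}$ of $V$ yields $\Phi^*\widetilde{\psi_0}=\psi_0$ and that $\Phi$ identifies $\psi_{m_j}$ with $\widetilde{\psi_{m_j}}$; that is, $\Phi$ is an isomorphism in $\Bb_{\fs,X}(\rG)$ between $(E_\rC,\psi_0,\sum_j\psi_{m_j})$ and $(\widetilde{E_\rC},\widetilde{\psi_0},\sum_j\widetilde{\psi_{m_j}})$.

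The analytic content is entirely carried by Lemmas~\ref{lemma local no indices} and \ref{lem local statement for functor}; the remaining work, and the only place calling for care, is the bookkeeping of trivializations and cocycles and the verification that the glued $\psi_0$, $\psi_{m_j}$ transform as a $\rC$-connection and as sections of the claimed associated bundles. The one genuine subtlety is the intervention of $Z(\rS)$: over overlaps the $\rB_\rS$-part of $q_{\alpha\beta}$ produced by Lemma~\ref{lem local statement for functor} agrees with the given $b_{\alpha\beta}$ only up to a locally constant central element of $\rS$, but as $Z(\rS)\subset\rC$ this is harmless for the construction, and it is exactly this slack that is reabsorbed into $\Phi$ in the uniqueness step --- which is why the $\rSL_2\C$ versus $\rPSL_2\C$ distinction only surfaces in Theorem~\ref{thm: no quadratic equivalence}.
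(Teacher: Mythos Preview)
Your proposal is correct and follows essentially the same approach as the paper: trivialize both bundles over a good cover, apply Lemma~\ref{lemma local no indices} to produce the local gauges $\Psi_\alpha$, then invoke Lemma~\ref{lem local statement for functor} on overlaps to see that the new transitions $q_{\alpha\beta}=\Psi_\beta^{-1}p_{\alpha\beta}\Psi_\alpha$ factor as $c_{\alpha\beta}b_{\alpha\beta}$ with $c_{\alpha\beta}\in\rC$, and read off the connection $\psi_0$ and the sections $\psi_{m_j}$ from \eqref{eq C transitions} and \eqref{eq psi_j trans like tensor}. Your handling of uniqueness via the degenerate case $p_{\alpha\beta}=b_{\alpha\beta}=\Id$ of Lemma~\ref{lem local statement for functor} is slightly more systematic than the paper's terse ``breaking into graded pieces'' argument, but the content is the same.
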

\begin{proof}
    Let $\{U_\alpha\}$ be a trivializing open cover of both $E_{\rP}$ and $F_{\rB_\rS}$ and let 
    \[\xymatrix{\{p_\ab:U_\ab\to \rP\}&\text{and}&\{b_\ab:U_\ab\to \rB_\rS\}}\]
    be the transition functions of $E_\rP$ and $F_{\rB_\rS}$ respectively. The restrictions of $\omega$ and $\eta$ to $U_\ab$ satisfy
    \[\xymatrix{\omega_\alpha=\Ad(p_\ab^{-1})\circ\omega_\beta+p_\ab^*\theta_\rP&\text{and}&\eta_\alpha=\Ad(b_\ab^{-1})\circ\eta_\beta+b_\ab^*\theta_{\rB_\rS}}.\]

    By Lemma \ref{lemma local no indices}, we obtain $\{\Psi_\alpha:U_\alpha\to \rP\}$ so that 
    \[\Ad(\Psi_\alpha^{-1})\circ\omega_\alpha+\Psi_\alpha^*\theta_\rP-\eta_\alpha:=\psi_\alpha:TU_\alpha\to V.\]
    Let $E_{\Qq}$ be the principal $\rP$-bundle defined by the transition functions $\Qq:=\{q_\ab=\Psi_\beta^{-1} p_\ab \Psi_\alpha:U_\ab\to \rP\}.$  By construction, the locally defined $\Psi_{\alpha}$ patch to a globally defined isomorphism $\Psi: E_{\mathcal{Q}}\rightarrow E_{\rP}.$ 
    Now, we are exactly in the setting of Lemma \ref{lem local statement for functor} and we deduce that $q_{\ab}=c_{\ab}\cdot b_{\ab}$ for some holomorphic map $c_{\ab}: U_{\ab}\rightarrow \rC.$

   Since $\rB_\rS$ and $\rC$ commute,  the functions $c_\ab: U_{\ab}\rightarrow \rC$ are the transition functions of a holomorphic $\rC$-bundle $E_\rC.$ 
 Therefore, $E_\Qq=E_\rC\star F_{\rB_\rS}[\rP]$ and $\Psi:  E_\rC\star F_{\rB_\rS}[\rP]\rightarrow E_{\rP}$ is the promised isomorphism.

Next, by Lemma \ref{lem local statement for functor} equation \eqref{preserves connection}, the locally defined holomorphic $1$-forms $\{\eta_{\alpha}+\psi_{\alpha}\}$ glue to define a holomorphic connection on
$E_\rC\star F_{\rB_\rS}[\rG]$ which, by construction, is equal to $\Psi^{\star}\omega.$  

By equation \eqref{eq 0 graded piece}, $\{(\psi_\alpha)_0\}$ defines a holomorphic connection on $E_\rC.$ Moreover, by \eqref{eq psi_j trans like tensor} each $\{(\psi_{\alpha})_{m_j}\}$ defines a  section $\psi_{m_j}\in \rH^0(X, E_\rC\star F_{\rB_\rS}[V_{2m_j}]\otimes\Kk).$
Thus,
\[\Psi^*\omega=\eta\star\psi_0+\psi_{m_1}+\cdots+\psi_{m_N}.\]

Finally, assume we are given a $\widetilde{\Psi}$ as in the statement of the Theorem which satisfies
\[\widetilde{\Psi}^*\omega=\eta\star\widetilde{\psi_0}+\widetilde{\psi_{m_1}}+\cdots+
\widetilde{\psi_{m_N}}.\]
Then by breaking into graded pieces we immediately obtain $(\widetilde{\Psi}^{-1}\circ \Psi)^{*}(\eta\star \widetilde{\psi}_{0})=\eta\star \psi_{0}$ and $(\widetilde{\Psi}^{-1}\circ \Psi)^{*}(\widetilde{\psi}_{i})=\psi_{i}$ for all $i>0.$  Therefore, this defines a morphism in the Slodowy category.
\end{proof}

We now prove Theorem \ref{thm: equivalence of categories} and Theorem \ref{thm: no quadratic equivalence}.
\begin{proof}[Proof of Theorem \ref{thm: equivalence of categories}]
By Theorem \ref{thm: parameterization picking PSL2 oper}, for each $(\rS,\rB_\rS)$-oper $\Theta,$ the Slodowy functor $F_{\Theta}$ is essentially surjective. Given a morphism $\Phi:(E_\rC,\psi_0)\to(E_\rC',\psi_0')$ in $\Bb_{\fs, X}(\rG)$, $F_\Theta(\Phi)$ is defined by $\Phi\star\Id_{\rB_\rS}:E_\rC\star F_{\rB_\rS}[\rG]\to E_\rC'\star F_{\rB_\rS}[\rG].$ This is clearly faithful. 

The fullness follows from the final statement of Theorem \ref{global normal form}.
\end{proof}

\begin{proof}[Proof of Theorem \ref{thm: no quadratic equivalence}]
    Fix an $(\rS,\rB_\rS)$-oper $\Theta=(F_\rS,F_{\rB_\rS},\eta)$ and let $(E_\rG,E_\rP,\omega)$  be a $(\rG,\rP)$-oper. By Theorem \ref{thm: parameterization picking PSL2 oper}, there is $\Xi=(E_\rC,\psi_0,\psi_1,\cdots,\psi_{m_N})$ in  $\Bb_{\fs, X}(\rG)$ such that 
    \[(E_\rG,E_\rP,\omega)\cong (F_\rS\star E_\rC[\rG],F_{\rB_\rS}\star E_\rC[\rP],\eta\star\psi_0+\psi_{m_1}+\cdots+\psi_{m_N}),\]
    where, using the isomorphism \eqref{eq iso levi to slodowy}, $\psi_{m_j}\in \rH^0(X,F_{\rB_\rS}\star E_{\rC}[V_{2m_j}] \otimes \Kk).$

    By \eqref{eq psi1 decomp}, $\psi_{m_1}$ decomposes as $\psi_{m_1}=q+\hat\psi_{m_1}$, where 
    \[q\in \rH^0(X,E_\rC\star F_{\rB_\rS}[\langle e\rangle]\otimes\Kk) \cong \rH^0(X, F_{\rB_\rS}[\langle e\rangle]\otimes\Kk)~,\]
    \[\hat\psi_{m_1}\in  \rH^0(X, E_\rC\star F_{\rB_\rS}[\hat V_2]\otimes \Kk).\]
    Let $\hat\Xi=(E_\rC,\psi_0,\hat\psi_{m_1},\psi_{m_2},\cdots,\psi_{m_N})$ be the associated object in $\widehat\Bb_{\fs, X}(\rG).$ 
    Note that $\Theta_q=(F_\rS,F_{\rB_\rS},\eta+q)$ is another $(\rS,\rB_\rS)$-oper and 
    \[F_{\Theta}(\Xi)=F_{\Theta_q}(\hat\Xi).\]
    Thus, the functor $F:\Op_{X}(\rS,\rB_\rS)\times \widehat\Bb_{\fs, X}(\rG)\to \Op_{X}(\rG,\rP)$ defined by $F(\Theta,\hat\Xi)=F_\Theta(\hat\Xi)$ is essentially surjective. 

For fullness, consider
\[\Phi:E_\rC\star F_{\rB_\rS}[\rP]\to E_\rC'\star F_{\rB_\rS}'[\rP]\]
such that $\Phi^*(\eta'\star\psi_0'+\hat\psi_1'+\cdots+\psi_{m_N}')=\eta\star\psi_0+\hat\psi_1+\cdots+\psi_{m_N}.$ Locally, $\Phi=\{\Phi_\alpha:U_\alpha\to\rP\}$, where each $\Phi_\alpha$ preserves $\fs+V$. By Lemma \ref{lemma Ad_U(f) not in V}, we have  $\Phi_\alpha:U_\alpha\to\rC\star\rB_\rS$, and $\Phi:E_\rC\star F_{\rB_\rS}\to E_\rC'\star F_{\rB_\rS}'$. Such a $\Phi$ can be written as a product $\Phi_\rC\cdot \Phi_{\rB_\rS}$ where $\Phi_\rC: E_\rC\to E_\rC'$ and $\Phi_{\rB_\rS}:F_{\rB_\rS}\to F_{\rB_\rS}'$. 

For faithfulness, note that the multiplication map $\rS\times\rC\to\rG$ is injective if and only if $\rS\cong\rPSL_2\C.$ Thus, the functor $F$ is faithful if and only if $\rS\cong\rPSL_2\C.$
\end{proof}


\subsection{Explicit models for $\rSL_4\C$-opers}\label{sec: explicit models}

Recall Examples \ref{ex: SL4 Lie data}, \ref{ex: sl2 data for even JM in SL4} and \ref{ex: oper definition for even JM in SL4}. For $\rSL_4\C,$ the $\fs$-Slodowy categories are defined as follows.
\begin{enumerate}
    \item For $\fs=\langle f,h,e\rangle \subset\fsl_4\C$ a principal $\fsl_2,$ we have $C=\langle \sqrt{-1}\Id\rangle.$ The objects of $\Bb_{\fs, X}(\rSL_4\C)$ consists of tuples 
    \[(\Ll,\psi_1,\psi_2,\psi_3),\]
    where $\Ll$ is a holomorphic line bundle on $X$ such that $\Ll^4=\Oo_X$ and $\psi_j\in \rH^0(X,\Kk^{j+1})$ for $j=1,2,3.$

    \item For $\fs=\langle f,h,e\rangle \subset\fsl_4\C$ the even $\fsl_2$ whose associated JM-parabolic is the stabilizer of subspace $\C^2\subset\C^4$, we have $\rC\cong\rSL_2^\pm\C$. Using Example \ref{ex: sl2 data for even JM in SL4}, the objects of $\Bb_{\fs, X}(\rSL_4\C)$ are tuples
    \[(\Ww,\nabla_\Ww,\psi_1),\]
    where $\Ww$ is a rank two holomorphic vector bundle with $\det(\Ww)^2\cong\Oo_X,$ $\nabla_\Ww$ is a holomorphic connection on $\Ww$ compatible with the isomorphism $\det(\Ww)^2\cong\Oo_X,$  and $\psi_1\in \rH^0(X,\End(\Ww)\otimes \Kk^2).$ The decomposition \eqref{eq psi1 decomp} is $\psi_1=q\otimes \Id_\Ww+\hat\psi_1,$
    where $q\in \rH^0(X,\Kk^2)$ and $\hat\psi_1\in \rH^0(X,\End(\Ww)\otimes\Kk^2)$ is a traceless.

    \item For $\fs=\langle f,h,e\rangle \subset\fsl_4\C$, the even $\fsl_2$ whose associated JM-parabolic is the stabilizes a partial flag $\C\subset\C^3\subset\C^4$, we have $\rC\cong\rGL_1\C$. Using Example \ref{ex: sl2 data for even JM in SL4}, the objects of $\Bb_{\fs, X}(\rSL_4\C)$ are tuples
    \[(\Ll,\nabla_\Ll,\psi_1,\psi_2),\]
    where $\Ll$ is a degree zero line bundle, $\nabla_\Ll$ is a holomorphic connection on $\Ll$, $\psi_{1}\in \rH^0(X,\Kk^2)\oplus \rH^0(X,\Ll^4\Kk^2)\oplus \rH^0(X,\Ll^{-4}\Kk^2)$ and $\psi_2\in \rH^0(X,\Kk^3).$ The decomposition \eqref{eq psi1 decomp} of $\psi_{1}$ is $\psi_{1}=q+\hat\psi_1,$ where $q\in \rH^0(X,\Kk^2)$ and $\hat\psi_1\in \rH^0(X,\Ll^4\Kk^2)\oplus \rH^0(X,\Ll^{-4}\Kk^2).$ 
\end{enumerate}

We now describe the Slodowy functor for each of these examples. To make the descriptions more concrete we will choose a smooth isomorphism between filtered objects and their associated graded. The holomorphic bundles with connections will then be expressed as Dolbeault operators and connections on the smooth bundle. 


Recall Examples \ref{ex: oper definition for even JM in SL4} and  Example \ref{ex: sl2 data for even JM in SL4}. Let $\Theta$ the $\rSL_2\C$-oper from \eqref{eq psl2 oper in smooth splitting}. The ordering of the below cases is the same as the previous examples. 
    \begin{enumerate}
    \item The $\Theta$-Slodowy functor $F_\Theta:\Bb_{\fs,X}(\rSL_4\C)\to\Op_{X}(\rSL_4\C,\rB)$ is  
    \[F_\Theta(\Ll,\psi_1,\psi_2,\psi_3)=(\bar\partial_\Vv,\nabla)
    \]
    where $(\bar\partial_\Vv,\nabla)$ are Dolbeault operators and holomorphic connections on the smooth bundle $\Ll \Kk^{\frac{3}{2}}\oplus \Ll \Kk^{\frac{1}{2}}\oplus \Ll \Kk^{-\frac{1}{2}}\oplus \Ll \Kk^{-\frac{3}{2}}$ given by
    \[\bar\partial_\Vv=\begin{pmatrix}
        \bar\partial_\Ll\otimes\bar\partial_{3/2}&3h&0&0\\0&\bar\partial_\Ll\otimes\bar\partial_{1/2}&4h&0\\0&0&\bar\partial_\Ll\otimes\bar\partial_{-1/2}&3h\\0&0&0&\bar\partial_\Ll\otimes\bar\partial_{-3/2}\end{pmatrix}\]
        \[\nabla=\begin{pmatrix}
        \nabla_\Ll\otimes\nabla^h_{3/2}&3q+3\psi_1&\psi_2&\psi_3\\1&\nabla_\Ll\otimes\nabla_{1/2}^h&4q+4\psi_1&\psi_2\\0&1&\nabla_\Ll\otimes\nabla^h_{-1/2}&3q+3\psi_1\\0&0&1&\nabla_\Ll\otimes\nabla^h_{-3/2}\end{pmatrix},\]
        where $\nabla_\Ll$ is the holomorphic connection which induces the trivial connection on $\Ll^4=\Oo_X.$

    \item  The $\Theta$-Slodowy functor $F_\Theta:\Bb_{\fs,X}(\rSL_4\C)\to\Op_{X}(\rSL_4\C,\rP)$ is 
    \[F_\Theta(\Ww,\nabla_\Ww,\psi_1)=(\bar\partial_\Vv,\nabla),
    \]
    where $(\bar\partial_\Vv,\nabla)$ are Dolbeault operators and holomorphic connections on the smooth bundle $(\Ww\otimes \Kk^\frac{1}{2})\oplus (\Ww\otimes \Kk^{-\frac{1}{2}})$ given by
    \[\bar\partial_\Vv=\begin{pmatrix}
        \bar\partial_{\Ww}\otimes \bar\partial_{1/2}&h\otimes \Id_\Ww\\0&\bar\partial_{\Ww}\otimes \bar\partial_{1/2}\end{pmatrix}\ \ \ \   \text{and} \ \ \ \ \nabla=\begin{pmatrix}
            \nabla_\Ww\otimes \nabla_{1/2}^h&q\otimes\Id_\Ww+\psi_1\\\Id_\Ww&\nabla_\Ww\otimes\partial_{-1/2}^h
        \end{pmatrix}.\]

    \item  The $\Theta$-Slodowy functor $F_\Theta:\Bb_{\fs,X}(\rSL_4\C)\to\Op_{X}(\rSL_4\C,\rP)$ is
    \[F_\Theta(\Ll,\nabla_\Ll,\psi_1,\psi_2)=(\bar\partial_\Vv,\nabla)
    \]
    where $(\bar\partial_\Vv,\nabla)$ are Dolbeault operators and holomorphic connections on the smooth bundle $\Ll K\oplus \Ll \oplus\Ll^{-3}\oplus \Ll \Kk^{-1}$ given by
    \[\bar\partial_\Vv=\begin{pmatrix}
        \bar\partial_\Ll\otimes\bar\partial_{1}&h&0&0\\0&\bar\partial_\Ll&0&h\\0&0&\bar\partial_{\Ll^{-3}}&0\\0&0&0&\bar\partial_\Ll\otimes\bar\partial_{-1}\end{pmatrix}\]
        \[\nabla=\begin{pmatrix}
        \nabla_\Ll\otimes\nabla^h_{1}&2q+2\alpha&\beta&\psi_3\\1&\nabla_\Ll&0&2q+2\alpha\\0&0&\nabla_{\Ll^{-3}}&\gamma\\0&1&0&\nabla_\Ll\otimes\nabla^h_{-1}\end{pmatrix},\]
        where $\psi_1=(\alpha,\beta,\gamma)\in \rH^0(X,\Kk^2)\oplus \rH^0(X,\Ll^4\Kk^2)\oplus \rH^0(X,\Ll^{-4}\Kk^2).$
\end{enumerate}

\subsection{The $(\fs,\lambda)$-Slodowy functor}
We now describe appropriate generalizations of Theorems \ref{thm: equivalence of categories}, \ref{thm: no quadratic equivalence} and \ref{thm: parameterization picking PSL2 oper} to the category $\Qq\Op_{X}(\rG,\rP) $ of $(\lambda,\rG,\rP)$-opers. Since the proofs and statements of the results are almost identical to the $\lambda=1$ case, we mostly omit the details.

First note that the definition of the $\fs$-Slodowy category $\Bb_{\fs, X}(\rG)$ generalizes immediately to a $(\lambda,\fs)$-Slodowy category $\Qq\Bb_{\fs, X}(\rG)$ whose objects consist of tuples $(\lambda,E_\rC,\psi_0,\psi_{m_1},\cdots,\psi_{m_N})$, where $(\lambda,E_\rC,\psi_0)$ is an object in $\Qq\Ff(\rC)$ and $\psi_{m_j}\in \rH^0(X,E_\rC[V_{2m_j}]\otimes \Kk^{m_j+1}).$ Similarly, we define the traceless quadratic part $\Qq\widehat\Bb_{\fs, X}(\rG)$ to be the locus where $\psi_{m_1}=\hat\psi_{m_1}.$

There are natural projections,
    \[\xymatrix{\Qq\Op_{X}(\rG,\rP) \to \C&\text{and}&\Qq\Bb_{\fs, X}(\rG)\to\C}.\]
 Denote the fiber category over $\lambda\in\C$ by $\Op_{X}^\lambda(\rG,\rP)$ and $\Bb_{\fs_,X}^\lambda(G)$, respectively. 

 The Slodowy functor from Definition \ref{def Slodowy functor} also immediately generalizes to $\lambda$-connections. For each $(\lambda,\rS,\rB_\rS)$-oper $\Theta_\lambda=(\lambda,F_\rS,F_{\rB_\rS},\eta),$ define the $\Theta_\lambda$-Slodowy functor $F_{\Theta_\lambda}:\Bb_{\fs,X}^\lambda(\rG)\to\Op_{X}^\lambda(\rG,\rP)$ by 
 \[F_{\Theta_\lambda}(\Xi_\lambda)=(\lambda~,~E_{\rC}\star F_{\rS}(\rG)~,~E_\rC\star F_{\rB_\rS}(\rP)~,~\eta\star\psi_0+\psi_{m_1}+\cdots+\psi_{m_N})~,\]
 where $\Xi_\lambda=(\lambda,E_\rC,\psi_0,\psi_{m_1},\cdots,\psi_{m_N})$. 

 Lemmas \ref{lemma local no indices} and \ref{lem local statement for functor} are valid when the Maurer-Cartan form $\theta_\rP$ in \eqref{eq: local difference lambda=1} and \eqref{preserves connection} are replaced by $\lambda\theta_\rP$. Moreover, the natural $\lambda$-connection generalization of Theorem \ref{thm: parameterization picking PSL2 oper} is proven by replacing the use of Lemma \ref{lem local statement for functor} by the $\lambda$-version. In particular, this implies that the $\lambda$-Slodowy functor $F_{\Theta_\lambda}:\Bb_{\fs,X}^\lambda(\rG)\to\Op_{X}^\lambda(\rG,\rP)$ is an equivalence of categories. 
Moreover, each section $\tau:\C\to\Qq\Op_{X}(\rS,\rB_\rS)$ defines an equivalence of categories
    \[F_\tau:\xymatrix@R=0em{\Qq\Bb_{\fs, X}(\rG)\ar[r]&\Qq\Op_{X}(\rG,\rP) \\\Xi_\lambda\ar@{|->}[r]&F_{\tau(\lambda)}(\Xi_\lambda)} \]

\begin{remark}\label{rem sections and hitchin section}
Recall that one such sections $\tau:\C\to\Qq\Op_{X}(\rS,\rB_\rS)$ is defined in \eqref{eq sections of lambda}. When $\langle f,h,e\rangle$ is a principal $\fsl_2$-triple, the functor 
\begin{equation}
    \label{eq: Slodowy functor recovering Hitchin section}F_{\tau(0)}:\Bb_{\fs,X}^0(\rG)\to\Op_{X}^0(\rG,\rB)\hookrightarrow \Ff_{X}^0(\rG)
\end{equation} recovers the Hitchin section from \eqref{eq: Hitchin section}.
\end{remark}

For the group $\rSL_4\C$, explicit models of the $(\lambda,\fs)$-Slodowy category are given by replacing the $\rC$-connection in \S \ref{sec: explicit models} with a $(\lambda,\rC)$-connection. Explicit descriptions of the $\lambda$-Slodowy functor are defined by replacing the $\rSL_2\C$-oper from \eqref{eq psl2 oper in smooth splitting} with the $(\lambda,\rSL_2\C)$-oper $\tau(\lambda)$ defined above. In particular, the associated Higgs bundles are obtained by setting $\lambda=0.$

Finally, analogous to Theorem \ref{thm: no quadratic equivalence}, the above equivalence can be upgraded to remove choice of section $\tau.$ 
There are natural $\C^*$-actions on $\Qq\Bb_{\fs,X}(\rG)$ and $\Qq\Op_{X}(\rG,\rP) $ defined by 
\begin{equation}
     \label{eq C*actions}
     \xymatrix@R=0em{\xi\cdot(\lambda, E_\rG,E_\rP,\omega)=(\xi\lambda,E_\rG,E_\rP,\xi\omega)\\
    \xi\cdot(\lambda,E_\rC,\psi_0,\psi_{m_1},\cdots,\psi_{m_N})=(\xi\lambda,E_\rC,\xi\psi_0,\xi\psi_{m_1},\cdots,\xi\psi_{m_N})}
 \end{equation} 
Denote the fiber product of the categories $\Qq\Op_{X}(\rS,\rB_\rS)$ and $\Qq\widehat\Bb_{\fs, X}(\rG)$ with respect to the natural projections to $\C$ by
\[\Qq\Op_{X}(\rS,\rB_\rS)\times_\C \Qq\widehat\Bb_{\fs, X}(\rG).\]
The diagonal $\C^*$-action on $\Qq\Op_{X}(\rS,\rB_\rS)\times\Qq\widehat\Bb_{\fs, X}(\rG)$ induces a natural $\C^*$-action on the fiber product. 

 The proof of the following theorem is almost identical to the proof of Theorem \ref{thm: no quadratic equivalence}. 
\begin{theorem}\label{Thm lambda equiv}
Let $\rG$ be a connected complex semisimple Lie group, $\fs=\langle f,h,e\rangle\subset\fg$ be an even $\fsl_2$-triple, $\rS<\rG$ the connected subgroup with Lie algebra $\fs$ and $\rP<\rG$ be the associated even JM-parabolic. Let $F_{\Theta_\lambda}$ be the $\Theta_\lambda$-Slodowy functor associated to an $(\lambda,\rS,\rB_\rS)$-oper $\Theta_\lambda$. Then, the functor 
    \[\Qq F:\Qq\Op_{X}(\rS,\rB_\rS)\times_\C \Qq\widehat\Bb_{\fs, X}(\rG)\to \Qq\Op_{X}(\rG,\rP)\]
    defined by $F(\Theta_\lambda,\hat\Xi_\lambda)=F_{\Theta_\lambda}(\hat\Xi_\lambda)$ is an equivalence of categories when $\rS\cong\rPSL_2\C$ and essentially surjective and full when $\rS\cong\rSL_2\C$. Moreover, $\Qq F$ is equivariant with respect to the natural $\C^*$-actions from \eqref{eq C*actions}.
\end{theorem}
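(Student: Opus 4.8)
The plan is to reduce the statement to Theorem \ref{thm: no quadratic equivalence} by working fiberwise over the $\lambda$-line. First I would invoke the $\lambda$-connection versions of Lemmas \ref{lemma local no indices} and \ref{lem local statement for functor} (valid, as noted in the paragraph preceding the theorem, once every Maurer--Cartan term $\theta_\rP,\theta_{\rB_\rS}$ is replaced by $\lambda\theta_\rP,\lambda\theta_{\rB_\rS}$) and the resulting $\lambda$-version of Theorem \ref{thm: parameterization picking PSL2 oper}: for any fixed $(\lambda,\rS,\rB_\rS)$-oper $\Theta_\lambda$, every $(\lambda,\rG,\rP)$-oper is isomorphic to $F_{\Theta_\lambda}(\Xi_\lambda)$ for some $\Xi_\lambda\in\Bb_{\fs,X}^\lambda(\rG)$, unique up to Slodowy isomorphism. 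The key structural observation is that none of the categories $\Qq\Op_X(\rG,\rP)$, $\Qq\Bb_{\fs,X}(\rG)$, $\Qq\Op_X(\rS,\rB_\rS)$ has morphisms between objects lying over distinct values of $\lambda$, so the fiber product $\Qq\Op_X(\rS,\rB_\rS)\times_\C\Qq\widehat\Bb_{\fs,X}(\rG)$ is the disjoint union over $\lambda\in\C$ of the products $\Op_X^\lambda(\rS,\rB_\rS)\times\widehat\Bb_{\fs,X}^\lambda(\rG)$, and $\Qq F$ restricts over each $\lambda$ to a functor of exactly the form treated in Theorem \ref{thm: no quadratic equivalence}, with a fixed $(\lambda,\rS,\rB_\rS)$-oper playing the role of the fixed $(\rS,\rB_\rS)$-oper. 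So it suffices to re-run that proof $\lambda$ by $\lambda$.

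Second, I would carry out that re-run. For essential surjectivity: given a $(\lambda,\rG,\rP)$-oper, choose any $(\lambda,\rS,\rB_\rS)$-oper $\Theta_\lambda$ (these exist, e.g. coming from the section $\tau$ of \eqref{eq sections of lambda}), write the given oper as $F_{\Theta_\lambda}(\Xi_\lambda)$ via the $\lambda$-version of Theorem \ref{thm: parameterization picking PSL2 oper}, decompose $\psi_{m_1}=q+\hat\psi_{m_1}$ as in \eqref{eq psi1 decomp} --- the underlying isomorphism $\Kk\cong F_{\rB_\rS}[\langle e\rangle]$ depends only on the $\fg_{-1}$-part of the $\lambda$-connection and so is available for every $\lambda$, including $\lambda=0$ --- and then absorb $q$ into the base oper: $\Theta_{\lambda,q}=(\lambda,F_\rS,F_{\rB_\rS},\eta+q)$ is again a $(\lambda,\rS,\rB_\rS)$-oper (adding a $\langle e\rangle$-valued form leaves the second fundamental form, hence the $\pos$ condition, unchanged), and $F_{\Theta_\lambda}(\Xi_\lambda)=F_{\Theta_{\lambda,q}}(\hat\Xi_\lambda)=\Qq F(\Theta_{\lambda,q},\hat\Xi_\lambda)$. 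For fullness: a morphism $\Qq F(\Theta_\lambda,\hat\Xi_\lambda)\to\Qq F(\Theta_\lambda',\hat\Xi_\lambda')$ is necessarily over a single $\lambda$ and is locally a holomorphic map into $\rP$ preserving $\fs+V$ fiberwise, hence valued in $\rC\star\rB_\rS$ by Lemma \ref{lemma Ad_U(f) not in V}, hence a product $\Phi_\rC\cdot\Phi_{\rB_\rS}$ that defines a morphism of the pair. For faithfulness nothing changes from Theorem \ref{thm: no quadratic equivalence}: the multiplication map $\rS\times\rC\to\rG$ is injective precisely when $\rS\cong\rPSL_2\C$, and for $\rS\cong\rSL_2\C$ the common central $\Z/2$ produces distinct morphisms with equal image; this gives the claimed equivalence / ``essentially surjective and full'' dichotomy.

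Finally I would check the $\C^*$-equivariance, which is the one genuinely new ingredient. This is a direct computation from \eqref{eq C*actions} and Definition \ref{def Slodowy functor}: for $\xi\in\C^*$, $\xi\cdot\Theta_\lambda=(\xi\lambda,F_\rS,F_{\rB_\rS},\xi\eta)$ is a $(\xi\lambda,\rS,\rB_\rS)$-oper, the identification \eqref{eq iso levi to slodowy} intertwines the $\xi$-scaling on the two sides (each $\psi_0$ and $\psi_{m_j}$ scales by $\xi$, matching the scaling of $\eta$), so $F_{\xi\cdot\Theta_\lambda}(\xi\cdot\hat\Xi_\lambda)=\xi\cdot F_{\Theta_\lambda}(\hat\Xi_\lambda)$; since the $\C^*$-action on the fiber product is the restriction of the diagonal action on the product, $\Qq F$ intertwines the two actions. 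The main obstacle is not conceptual but organizational: almost everything is inherited from Theorems \ref{thm: no quadratic equivalence} and \ref{thm: parameterization picking PSL2 oper}, and the one point that really wants attention is the $\lambda=0$ endpoint of the $\lambda$-versions of Lemmas \ref{lemma local no indices} and \ref{lem local statement for functor} --- there the rescaling trick $\tfrac1\lambda\omega$ is unavailable, but the gauge-fixing in Lemma \ref{lemma local no indices} only invokes Theorem \ref{thm: Kostant generalization} at the level of the pointwise adjoint action and the Maurer--Cartan terms enter multiplied by $\lambda=0$, so the residual $1$-form still lands in $V$ and the argument goes through unchanged.
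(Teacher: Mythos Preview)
Your proposal is correct and follows the same approach as the paper, which in fact omits the proof entirely and merely remarks that it is ``almost identical to the proof of Theorem \ref{thm: no quadratic equivalence}.'' Your write-up faithfully expands what that remark means: work fiberwise over $\lambda$, invoke the $\lambda$-versions of Lemmas \ref{lemma local no indices}, \ref{lem local statement for functor} and Theorem \ref{thm: parameterization picking PSL2 oper}, and then repeat the essential surjectivity, fullness, and faithfulness arguments verbatim; your treatment of the $\C^*$-equivariance and the $\lambda=0$ endpoint is more careful than anything the paper spells out.
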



\section{More explicit examples and related objects}
\label{Section examples}
So far all of our explicit examples have been for $\rG=\rSL_4\C$ or the parabolic being the Borel subgroup. In this section we discuss a few more explicit examples which generalize the $\rSL_4\C$-examples discussed throughout the paper. We also discuss the relation between some of the examples below and so called higher Teichm\"uller spaces.

The parabolics of the groups $\rSO_n\C$ and $\rSp_{2m}\C$ will be described in terms of the subspaces they stabilize in $\C^n$ ($n=2m$ for $\rSp_{2m}\C$) equipped with a nondegenerate bilinear form which is symmetric for $\rSO_n\C$ and skew-symmetric for $\rSp_{2m}\C$. Also, a holomorphic vector bundle $\Vv$ with a holomorphic volume form $\Omega$ will sometimes be equipped with a compatible holomorphic symplectic structure $B$ or a compatible holomorphic orthogonal structure $Q.$ By this we mean, 
\begin{itemize}
	\item$B \in \rH^0(X, \Lambda^2\Vv^{*})$ is non-degenerate and compatible with $\Omega.$
	\item$Q \in \rH^0(X, \Sym^2\Vv^{*})$ is non-degenerate and compatible with $\Omega.$
\end{itemize}


\subsection{$\rB$-opers for classical groups}
An $(\rSL_n\C,\rB,\lambda)$-oper is a tuple 
\[(\lambda,\Vv,\Vv_1\subset\Vv_2\subset\cdots\subset\Vv_n,\nabla_\Vv),\]
where $\lambda\in\C$, $\Vv$ is a rank $n$ holomorphic vector bundle with $\det\Vv\cong\Oo_X,$ $\Vv_i$ is a rank $i$ holomorphic sub-bundle, and $\nabla_\Vv$ is a $\lambda$-connection on $\Vv$ compatible with the isomorphism $\det\Vv\cong\Oo_X,$ such that $\nabla(\Vv_i)\subset \Vv_{i+1}\otimes\Kk$ and $\nabla_\Vv$ induces an isomorphism $\Vv_i/\Vv_{i-1}\to\Vv_{i+1}/\Vv_i\otimes \Kk.$ 

For $\rSO_{2n+1}\C$ (resp. $\rSp_{2n}\C$), a $(\rB,\lambda)$-oper is an $(\rSL_{2n+1}\C,\rB,\lambda)$-oper (resp. $(\rSL_{2n}\C,\rB,\lambda)$-oper), where $\Vv$ is equipped with a holomorphic orthogonal (resp. symplectic) structure of volume $1$ which is preserved by $\nabla_\Vv$, and with respect to which $\Vv_i$ is isotropic and $\Vv_{n-i}=\Vv_i^\perp$ for $1\leq i\leq n$. 
For $\rG=\rSO_{2n}\C$, $(\rB,\lambda)$-opers are described in \S \ref{sec: more ex son} when $2n=2k+2.$ 

An object in the category $\Qq\Bb_{\fs, X}(\rSL_n\C)$ is a tuple $(\lambda,\Ll,\psi_1,\cdots,\psi_{n-1})$, where $\lambda\in\C$, $\Ll$ is a holomorphic line bundle with $\Ll^n\cong\Oo_X$ and $\psi_j\in \rH^0(X,\Kk^{j+1}).$ For the groups $\rSO_{2n+1}\C$ and $\rSp_{2n}\C$ one has $\Ll\cong\Oo_X$ and $\Ll^2\cong\Oo_X$, respectively, and $\psi_{2j}=0$ in both cases. 

For the section $\tau$ of $\Qq\Op_{X}(\rPSL_2\C)\rightarrow \C$ from \eqref{eq sections of lambda}, the Slodowy functor $F_{\tau}:\Qq\Bb_{\fs,X}(\rSL_n\C)\to\Qq\Op_{X}(\rSL_n\C,\rB)$ is given by
	 \[F_{\tau}(\lambda,\Ll,\psi_1,\cdots,\psi_{n-1})=(\lambda,\bar\partial_\Ll\otimes\bar\partial_\Vv,\nabla_\Ll\otimes\nabla_\Vv),
    \]
    where, $\nabla_\Ll$ is the holomorphic $\lambda$-connection inducing the trivial connection on $\Ll^n=\Oo_X,$  and $(\bar\partial_\Ll\otimes\bar\partial_\Vv,\nabla_\Ll\otimes\nabla_\Vv)$ are Dolbeault operators and $\lambda$-connections on the smooth bundle $\Kk^{\frac{n-1}{2}}\oplus \Kk^{\frac{n-3}{2}}\oplus\cdots\oplus\Kk^{\frac{1-n}{2}}$ given by
    \begin{equation}
    	\label{eq: SLn B-oper explicit}
    	\bar\partial_\Vv=\left(\begin{smallmatrix}
        \bar\partial_\frac{n-1}{2} &\lambda\mu_1h&\\&\bar\partial_\frac{n-3}{2}&\lambda\mu_2h\\&&\ddots&\ddots\\&&&\bar\partial_\frac{3-n}{2}&\lambda\mu_{n-1}h\\&&&&\bar\partial_\frac{1-n}{2}\end{smallmatrix}\right)
    \end{equation}
     \[\nabla_\Vv=\left(\begin{smallmatrix}
        \lambda\nabla^h_\frac{n-1}{2}&\mu_1( q+\psi_1)&\psi_2&\dots&\psi_{n-1}\\1&\lambda\nabla_\frac{n-3}{2}^h&\mu_2(q+\psi_1)&\dots&\psi_{n-2}\\ &\ddots&\ddots&\ddots&\vdots\\\\&&1&\lambda\nabla^h_\frac{3-n}{2}&\mu_{n-1}(q+\psi_{1})\\&&&1&\lambda\nabla^h_\frac{1-n}{2}\end{smallmatrix}\right).\]
        Here $\mu_i=i(n-i)$ and the coefficient on each $\psi_i$ are 1 for $i>1.$ The $\rSO_{2n+1}\C$ (resp. $\rSp_{2n}\C$) Slodowy functor is given by the restriction of the $\rSL_{2n+1}\C$ (resp. $\rSL_{2n}\C$) Slodowy functor. Note that $\Vv$ has natural orthogonal (resp. symplectic) structure when $\rk(\Vv)=2n+1$ (resp. $\rk(\Vv)=2n$) which is preserved by $\nabla_\Vv$ when $\psi_{2j}=0$ for all $j.$

\subsection{Compact dual of Tube-type Hermitian symmetric space}\label{section: max}
A family of even JM-parabolics $\rP<\rG$ arise when $\rG/\rP$ is the compact dual of a non-compact Hermitian symmetric space of tube type.  There are five such pairs $(\rG,\rP)$:
\begin{enumerate}
    \item $\rG=\rSL_{2n}\C$ and $\rP$ the stabilizer of a $\C^n\subset\C^{2n}$;
    \item $\rG=\rSp_{2n}\C$ and $\rP$ the stabilizer of an \emph{isotropic} subspace $\C^n\subset\C^{2n}$;
    \item $\rG=\rSO_{4n}\C$ and $\rP$ the stabilizer of an \emph{isotropic} subspace $\C^{2n}\subset\C^{4n};$
    \item $\rG=\rSO_n\C$ and $\rP$ the stabilizer of an \emph{isotropic} line $\C\subset\C^n;$
    \item $\rG=\rE_7$ and, if $\sum_{i=1}^7n_i\alpha_i$ is an expression of the longest root with respect to a choice of simple roots, $\rP$ is the maximal parabolic subgroup associated to the unique simple root $\alpha_k$ with $n_k=1.$
\end{enumerate}

Each such parabolic is a maximal parabolic which corresponds to a simple root $\alpha_k$ with $n_k=1$, where the longest root is given by $\sum_{i=1}^{\rk(\fg)}n_i\alpha_i.$ Thus, the $\Z$-gradings \eqref{eq associated graded Z grading of parabolic} are $\fg=\fg_{-1}\oplus\fg_0\oplus\fg_{1}$ and the $\fsl_2$-module decompositions \eqref{eq sl2 module decomp} are $\fg=W_0\oplus W_2.$
The centralizers $\rC$ and multiplicities $(n_0,n_2)$ are 
\begin{center}
    \begin{tabular}{|c|c|c|c|c|c|}
\hline
$\rG$& $\rSL_{2n}\C$&$\rSp_{2n}\C$ &$\rSO_{4n}\C$& $\rSO_n\C$&$\rE_7$\\
\hline
$\rC$&$\rSL^\pm_n\C$&$\rO_n\C$&$\rSp_{2n}\C$&$\rO_{n-3}\C$&$\rF_4$\\
\hline
$n_0$& $n^2-1$&$\frac{n(n-1)}{2}$&$n(2n+1)$&$\frac{(n-2)(n-3)}{2}$&$52$\\
\hline
$n_2$& $n^2$&$\frac{n(n+1)}{2}$&$n(2n-1)$&$n-2$&$27$\\
\hline
\end{tabular}
\end{center}

We now describe the vector bundle definition of an $(\rG,\rP,\lambda)$-oper in cases (1)-(3), case (4) will be described in the next subsection. 

An $(\rSL_{2n}\C,\rP,\lambda)$-oper consists of a tuple $(\lambda,\Vv,\Vv_n,\nabla_\Vv)$, where $\lambda\in\C$, $\Vv$ is a rank $2n$ holomorphic vector bundle with $\Lambda^{2n}V\cong\Oo_X$, $\nabla_\Vv$ is a $\lambda$-connection on $\Vv$  preserving $\Lambda^{2n}V\cong\Oo_X$, and $\Vv_n\subset\Vv$ is a holomorphic rank $n$ sub-bundle such that the induced map $\nabla_\Vv:\Vv_n\to\Vv/\Vv_n\otimes\Kk$ is an isomorphism. 

For case (2), an $(\rSp_{2n}\C,\rP,\lambda)$-oper is an $(\rSL_{2n},\C,\rP,\lambda)$-oper $(\lambda,\Vv,\Vv_n,\nabla_\Vv)$ such that $\Vv$ is equipped holomorphic symplectic structure $B_\Vv$  which is preserved by $\nabla_\Vv$ and with respect to which $\Vv_n$ is isotropic. 

For case (3), an $(\rSO_{4n}\C,\rP,\lambda)$-oper is an $(\rSL_{4n},\C,\rP,\lambda)$-oper $(\lambda,\Vv,\Vv_{2n},\nabla_\Vv)$ such that $\Vv$ is equipped holomorphic orthogonal structure $Q_\Vv$ which is preserved by $\nabla_\Vv$ and with respect to which $\Vv_{2n}$ is isotropic. 

For case (1), the objects $\Qq\Bb_{\fs,X}(\rSL_{2n}\C)$ consist of tuples $(\lambda,\Ww,\nabla_\Ww,\psi_1)$, where $\lambda\in\C$, $\Ww$ is a rank $n$ holomorphic vector bundle with $\det(\Ww)^2\cong\Oo_X,$ $\nabla_\Ww$ is a holomorphic $\lambda$-connection on $\Ww$ and $\psi_1\in \rH^0(X,\End(\Ww)\otimes \Kk^2).$
For the section $\tau$ of $\Qq\Op_{X}(\rPSL_2\C)$ from \eqref{eq sections of lambda}, the Slodowy functor is
	 \[F_{\tau}(\lambda,\Ww,\nabla_\Ww,\psi_1)=(\lambda,\bar\partial_\Vv,\nabla_\Vv),\]
    where $(\bar\partial_\Vv,\nabla_\Vv)$ are Dolbeault operators and $\lambda$-connections on the smooth bundle $\Ww\otimes \Kk^\frac{1}{2}\oplus \Ww\otimes \Kk^{-\frac{1}{2}}$ given by
   \begin{equation}
   	\label{eq: dolbeault sl2n}\bar\partial_\Vv=\begin{pmatrix}
        \bar\partial_{\Ww}\otimes \bar\partial_{1/2}&\lambda h\otimes \Id_\Ww\\0&\bar\partial_{\Ww}\otimes \bar\partial_{-1/2}\end{pmatrix}\ \ ,\ \  \nabla_\Vv=\begin{pmatrix}
            \nabla_\Ww\otimes \lambda \nabla_{1/2}^h& q\otimes\Id_\Ww+\psi_1\\\Id_\Ww&\nabla_\Ww\otimes\lambda\nabla_{-1/2}^h
        \end{pmatrix}.
 \end{equation}

 For case (2), the objects of $\Qq\Bb_{\fs,X}(\rSp_{2n}\C)$ are objects $(\lambda,\Ww,\nabla_\Ww,\psi_1)$ of $\Qq\Bb_{\fs,X}(\rSL_{2n}\C)$ where $\Ww$ is equipped with a holomorphic orthogonal structure $Q_\Ww$ which is preserved by $\nabla_\Ww$ and with respect to which $\psi_1$ is symmetric. The Slodowy functor $F_{\tau}$ is the restriction of functor on $\Qq\Bb_{\fs,X}(\rSL_{2n}\C)$. The symplectic structure on $\Ww\otimes K^\frac{1}{2}\oplus \Ww\otimes K^{-\frac{1}{2}}$ is defined by 
 \[B_\Vv=\begin{pmatrix}
 	0&Q_\Ww\\-Q_\Ww&0
 \end{pmatrix}.\]

For case (3), the objects of $\Qq\Bb_{\fs,X}(\rSO_{4n}\C)$ are objects $(\lambda,\Ww,\nabla_\Ww,\psi_1)$ of $\Qq\Bb_{\fs,X}(\rSL_{4n}\C)$ where $\Ww$ is equipped with a holomorphic symplectic structure $B_\Ww$ which is preserved by $\nabla_\Ww$ and with respect to which $\psi_1$ is symmetric. The Slodowy functor $F_{\tau}$ is the restriction of functor on $\Qq\Bb_{\fs,X}(\rSL_{4n}\C)$. The orthogonal structure on $\Ww\otimes K^\frac{1}{2}\oplus \Ww\otimes K^{-\frac{1}{2}}$ is defined by 
 \[Q_\Vv=\begin{pmatrix}
 	0&B_\Ww\\-B_\Ww&0
 \end{pmatrix}.\]

\subsection{Some more opers for $\rSL_n\C$ and $\rSO_n\C$}\label{sec: more ex son}
Another family of even JM-parabolics of $\rSL_n\C$ for are given by the stabilizers of partial flags
\[\C^1\subset\C^2\subset\cdots\subset\C^k\subset \C^{n-k}\subset\cdots\subset\C^{n-1}\subset\C^n\]
where $2k<n.$ When $\C^n$ is equipped with an orthogonal structure and the subspaces $\C^i$ are isotropic for $1\leq i\leq k$, the associated parabolic $\rP<\rSO_n\C$ is also an even JM-parabolic.
In particular, for the orthogonal group, the parabolic $\rP$ is the Borel subgroup when $n=2k+1$ and $n=2k+2$, and $\rP$ is the parabolic from case (4) of the previous section when $k=1$.

For the above parabolic $\rP,$ an $(\rSL_n\C,\rP,\lambda)$-oper consists of a tuple 
\[(\lambda~,~\Vv~,~\Vv_1\subset\cdots\Vv_{k}\subset\Vv_{n-k}\subset\cdots\subset\Vv~,~\nabla_\Vv),\]
where $\lambda\in\C$, $\Vv$ is a rank $n$ holomorphic vector bundle with $\Lambda^{n}V\cong\Oo_X$, $\nabla_\Vv$ is a $\lambda$-connection on $\Vv$ compatible with $\Lambda^nV\cong\Oo_X$ and $\Vv_i\subset\Vv$ is a holomorphic rank $i$ sub-bundle such that:
\begin{itemize}
	\item for $i\neq k$, $\nabla_\Vv(\Vv_i)\subset\Vv_{i+1}\otimes\Kk$ and $\nabla_\Vv$ induces an isomorphism $\Vv_{i}/\Vv_{i-1}\cong\Vv_{i+1}/\Vv_{i}\otimes \Kk$;
	\item $\nabla_\Vv(\Vv_{k})\subset\Vv_{n-k}\otimes\Kk$, and $\nabla^2_\Vv$ induces an isomorphism $\Vv_{k}/\Vv_{k-1}\cong\Vv_{n-k+1}/\Vv_{n-k}\otimes \Kk^2$.
\end{itemize}
When $\rG=\rSO_n\C$, an $(\rSO_n\C,\rP,\lambda)$-oper is an $(\rSL_n\C,\rP,\lambda)$-oper where the holomorphic bundle $\Vv$ is equipped with an orthogonal structure $Q_\Vv$ which is preserved by $\nabla_\Vv$ and with respect to which $\Vv_i$ is isotropic and $\Vv_{n-i}=\Vv_i^{\perp_{Q_\Vv}}$ for $1\leq i\leq k$.

We will describe the $\fs$-Slodowy category and functor for $\rG=\rSO_n\C$, the $\rG=\rSL_n\C$ case is left to the reader.  
The $\fsl_2$-module decomposition of the even JM-parabolic is 
\[\fso_n\C=W_0\oplus W_{2k}\oplus\bigoplus\limits_{\substack{j=1\\j\neq\frac{k}{2}+1}}^{k}W_{4j-2}.\]
The centralizer $\rC$ of the even $\fsl_2$ is isomorphic to $\rO_{n-2k+1}\C,$ and the multiplicities are $n_0=\frac{(n-2k-1)(n-2k-2)}{2},$ $n_{4j-2}=1$ for $4j-2\neq 2k,$ and
\[n_{2k}=\begin{dcases}
	n-2k&\text{if $k$ odd}\\
	n-2k-1&\text{if $k$ even}
\end{dcases}.\]
When $k$ is odd, the $\rC$-representation space $Z_{2k}$ decomposes as a direct sum of the one dimensional trivial representation and the standard $(n-2k-1)$-dimensional representation twisted by the determinant representation.

The objects of the category $\Qq\Bb_{\fs,X}(\rSO_n\C)$ consists of tuples 
\[(\lambda,\Ww,\nabla_\Ww, \psi_1,\cdots, \psi_{2k-1}),\]
where $\lambda\in\C$, $\Ww$ is a rank $(n-2k-1)$ holomorphic vector bundle equipped with an orthogonal structure $Q_\Ww$, $\nabla_\Ww$ is a $\lambda$-connection on $\Ww$ which preserves $Q_\Ww$, $\psi_{2j-1}\in \rH^0(X,\Kk^{2j})$ for $2j-1\neq k$ and  
\[\psi_{k}=\begin{dcases}
	(q_{k},\hat\psi_k)\in \rH^0(X,K^{k+1})\oplus \rH^0(X,\Ww\otimes\det(\Ww)\otimes \Kk^{k+1})&\text{if $k$ odd}\\
	\hat\psi_k\in \rH^0(X,\Ww\otimes\det(\Ww)\otimes \Kk^{k+1})&\text{if $k$ even}.
\end{dcases}\]

The Slodowy functor $F_{\tau}:\Qq\Bb_{\fs,X}(\rSO_n\C)\to\Qq\Op_{X}(\rSO_n\C,\rP)$ is given by 
\[F_{\tau}(\lambda,\Ww,\nabla_\Ww, \psi_1,\cdots, \psi_{2k-1})=(\lambda,\bar\partial_\Vv,\nabla_V),\]
where $(\bar\partial_\Vv,\nabla_\Vv)$ are Dolbeault operators and $\lambda$-connections on the bundle $\Vv=\Ww\oplus \Ww_{2k+1}\otimes\det\Ww$ given by
\begin{equation}
   	\label{eq: dolbeault son}\bar\partial_\Vv=\begin{pmatrix}
        \bar\partial_{\Ww}&0\\0& \bar\partial_{\Ww_{2k+1}}\otimes\bar\partial_{\det \Ww}\end{pmatrix}\ \ ,\ \  \nabla_\lambda=\begin{pmatrix}
            \nabla_\Ww &\hat\Psi_k^T\\\hat\Psi_k&\nabla_{\Ww_{2k+1}}\otimes \nabla_{\det \Ww}
        \end{pmatrix}.
 \end{equation}
Here, 
\begin{itemize}
	\item $(\lambda,\Ww_{2k+1},\nabla_{\Ww_{2k+1}})$ denotes the $(\rSO_{2k+1}\C,\rB,\lambda)$-oper \eqref{eq: SLn B-oper explicit} associated to the Slodowy functor evaluated on 
\[\begin{dcases}
	(\psi_1,\psi_3,\psi_{k-2},q_{k},\psi_{k+2},\cdots,\psi_{2k})&\text{ if $k$ is odd}\\
	(\psi_1,\psi_3,\cdots,\psi_{2k})&\text{if $k$ is even};
\end{dcases}\]
\item   $\hat\Psi_k:\Ww_{2k+1}\otimes\det\Ww\to \Ww\otimes\Kk$ is a holomorphic bundle map which, in the smooth splitting of $\Ww_{2k+1}$ from \eqref{eq: SLn B-oper explicit}, is given by 
\[\hat\Psi_k=\begin{pmatrix}
	0&\cdots&0&\hat\psi_k
\end{pmatrix}:(\Kk^{k}\oplus\cdots\oplus \Kk^{-k})\otimes\det\Ww\to\Ww\otimes\Kk.\]
\end{itemize}
The orthogonal structure $Q_\Vv$ on $\Ww\oplus\Ww_{2k+1}\otimes\det\Ww$ is given by
\[Q_\Vv=\begin{pmatrix}
	Q_\Ww&0\\0&Q_{\Ww_{2k+1}}\otimes \det Q_\Ww
\end{pmatrix}.\]

\subsection{Remarks on Nonabelian Hodge and Higher Teichm\"uller spaces}
So far, we have avoided the discussion of moduli spaces and stability. However, for every $\lambda$, the categories $\Bb_{\fs,X}^\lambda(\rG)$ and $\Op_{X}^\lambda(\rG,\rP)\subset \Ff_{X}^\lambda(\rG)$ have natural stability conditions and corresponding coarse moduli spaces. 

We expect the natural stability conditions on $\Bb_{\fs,X}^\lambda(\rG)$ and $\Ff_{X}^\lambda(\rG)$ to be compatible so that the Slodowy functor $F_{\Theta_\lambda}$ induces a well defined map on coarse moduli spaces. 
This is especially desirable when $\lambda=0$ because one can associate a holomorphic connection to a point in the moduli space of poly-stable Higgs bundles via the non-abelian Hodge correspondence. 

 Denote the moduli spaces of isomorphism classes of poly-stable $\rG$-Higgs bundles and reductive holomorphic connections on a compact Riemann surface $X$ by $\Mm_{X}^0(\rG)$ and $\Mm_{X}^1(\rG)$ respectively. We refer the reader to \cite{SimpsonModuli1,SimpsonModuli2} for the construction of these coarse moduli spaces.
 
 Furthermore, the moduli space $\Mm_{X}^1(\rG)$ is analytically isomorphic to the complex analytic variety (the \emph{character variety}) $\Xx_{X}(\rG)$ of conjugacy classes of reductive homomorhisms of the fundamental group of $X$ in $\rG$:
 \[\Xx_{X}(\rG)= \Hom^{\textnormal{red}}(\pi_1(X),\rG)/ \rG.\]
 Note that $\Xx_{X}(\rG)$ as a complex analytic variety only depends on the topological surface underlying the Riemann surface $X.$

 The non-abelian Hodge correspondence (see \cite{selfduality,canonicalmetrics,harmoicmetric,SimpsonVHS}) defines a real analytic isomorphism between these spaces:
\[\Tt:\Mm_{X}^0(\rG)\to\Mm_{X}^1(\rG)\cong\Xx_{X}(\rG).\]

For $(0,\rPSL_2\C,\rB)$-opers, the Higgs bundles are always stable and the image under the map $\Tt$ can be identified with the Teichm\"uller space of the underlying topological surface \cite{selfduality}. More precisely, the image of $(0,\rPSL_2\C,\rB)$-opers under the non-abelian Hodge correspondence $\Tt$ consists of all conjugacy classes of holonomies of hyperbolic structures on the underlying topological surface.

For $(0,\rG,\rB)$-opers (i.e., the Hitchin section) it is not hard to show that the Higgs bundles are always stable and so define points in $\Mm_{X}^0(\rG).$ Moreover, applying non-abelian Hodge to this locus defines a (union of) connected component of the character variety of representations into the split real form of $\rG$ called the Hitchin component \cite{liegroupsteichmuller}.  Moreover, the surface group representations in this component generalize many features of Teichm\"uller space. In particular, they are all discrete and faithful quasi-isometric embeddings \cite{AnosovFlowsLabourie,fock_goncharov_2006}, and are holonomies of locally homogeneous  geometric structures on closed manifolds \cite{guichard_wienhard_2012,KLPAnosov1}. The Hitchin component was the first example of what is now referred to as a higher Teichm\"uller space/component.

The family of $(0,\rG,\rP)$-opers described in \S \ref{section: max} are related to a family of higher Teichm\"uller spaces known as maximal representations into a real Hermitian Lie group of tube type. In these cases, when the Higgs field $\psi_0$ is identically zero and $q=0$, the Slodowy functor $F_{\tau(0)}$ recovers the Cayley correspondence of \cite[\S 5]{BGRmaximalToledo} (see also \cite{sp4GothenConnComp,HermitianTypeHiggsBGG,UpqHiggs}). This correspondence relates Higgs bundles with so called maximal Toledo invariant for a real\footnote{We have not discussed Higgs bundles for real groups, for appropriate definitions see for example \cite{GothenSurvey}.} Hermitian Lie group of tube type $\rG^\R$ with $\Kk^2$-twisted Higgs bundles for another group. 
For the complex groups $\rSL_{2n}\C,~\rSp_{2n}\C,~\rSO_{4n}\C,~\rSO_n\C~$ and $\rE_7$, the Hermitian Lie group $\rG^\R$ is $\rSU_{n,n},$ $\rSp_{2n}\R,$ $\rSO^*_{4n},$ $\rSO_{2,n-2}$ and $\rE_7^{-25}$, respectively. 

Under the non-abelian Hodge correspondence, the isomorphism classes of such Higgs bundles which are poly-stable are in bijective correspondence with the set of conjugacy classes of so called maximal representations. Such representations have been studied by many authors, and correspond to unions of connected components of the character variety which consist entirely of discrete and faithful representations \cite{BIWmaximalToledoAnnals}.

In contrast, the  $(0,\rSL_n\C,\rP)$-opers from \S \ref{sec: more ex son} are \emph{not} related to unions of connected components of the character variety of a real form of $\rSL_n\C.$ However, the $(0,\rSO_n\C,\rP)$-opers from \S \ref{sec: more ex son} \emph{are} related to connected components of character varieties. In this case, when the Higgs field $\psi_0$ is identically zero and $q=0$, the Slodowy functor $F_{\tau(0)}$ recovers the generalized Cayley correspondence of \cite{so(pq)BCGGO}. 
Similar to the case of maximal representations, under the non-abelian Hodge correspondence, the isomorphism classes of such Higgs bundles which are poly-stable are identified with unions of connected components of the character variety for the real form $\rSO(k-1,n-k+1)$. 




\begin{remark}
For a general even JM-parabolic, when $\lambda=0$ and $\psi_0=0,$ the Slodowy functor produces poly-stable Higgs bundles whose associated representation via the non-abelian Hodge correspondence is \emph{not} always valued in a real form. However, using Guichard-Wienhard's work on $\Theta$-positivity \cite{PosRepsGWPROCEEDINGS}, one expects one more such family for certain real forms of the exceptional groups $\rF_4,$ $\rE_6,$ $\rE_7$ and $\rE_8.$ This is indeed the case and the relationship between the Slodowy functor and higher Teichm\"uller spaces will be described in \cite{MagicalBCGGO}. 
\end{remark}


\bibliography{Notesbib}{}
\bibliographystyle{plain}

\end{document}